\documentclass[final,10pt]{elsarticle}
\usepackage{lineno,hyperref}
\usepackage{epsfig,amssymb,amsmath,version,amsthm}
\usepackage{amssymb,version,graphicx,fancybox,mathrsfs,multirow}
\usepackage[notcite,notref]{showkeys}
\usepackage{url,hyperref}
\usepackage{subfigure,epstopdf,bm}
\usepackage{color, xcolor}
\usepackage{appendix}
\usepackage{algorithm}
\usepackage{algorithmicx}
\usepackage{algpseudocode}
\usepackage{amsmath}
\usepackage{amsfonts}
\usepackage{amssymb}
\usepackage{graphicx}%

 \headheight=3pt \topmargin=0.3cm \textheight=8.4in \textwidth=5.6in
 \setlength{\oddsidemargin}{1cm} \setlength{\evensidemargin}{1cm}
 \addtolength{\voffset}{-5pt}


\journal{Computer Physics Communications}

 \catcode`\@=11 \theoremstyle{plain}
 \@addtoreset{equation}{section}   
 \renewcommand{\theequation}{\arabic{section}.\arabic{equation}}
 \@addtoreset{figure}{section}
 \renewcommand\thefigure{\thesection.\@arabic\c@figure}
 \@addtoreset{table}{section}
 \renewcommand\thetable{\thesection.\@arabic\c@table}
 \newtheorem{thm}{\bf Theorem}
 
 \newtheorem{proposition}{Proposition}[section]
 \newenvironment{theorem}{\begin{thm}} {\end{thm}}
 \newtheorem{cor}{\bf Corollary}

 \newtheorem{lmm}{\bf Lemma}
 
 \newenvironment{lemma}{\begin{lmm}}{\end{lmm}}
 \theoremstyle{remark}
 \newtheorem{rem}{Remark}[section]

 \def \ri {{\rm i}}
 \def \widebar {\accentset{{\cc@style\underline{\mskip10mu}}}}

\newcommand{\bs}[1]{\boldsymbol{#1}}

\bibliographystyle{elsarticle-num}

\begin{document}
	
\begin{frontmatter}
\title{Fast multipole method for 3-D Laplace equation in layered media}

\author[fn1,fn2]{Bo Wang}
\author[fn2]{Wen Zhong Zhang}
\author[fn2]{Wei Cai\corref{cor1}}
\ead{cai@smu.edu}

\cortext[cor1]{Corresponding author}

\address[fn1]{LCSM(MOE), School of Mathematics and Statistics, Hunan Normal University, Changsha, Hunan, 410081, P. R. China.}
\address[fn2]{Department of Mathematics, Southern Methodist University, Dallas, TX 75275, USA.}

\begin{abstract}
In this paper, a fast multipole method (FMM) is proposed for 3-D Laplace equation in layered media. The potential due to charges embedded in layered media is decomposed into a free space component and four types of reaction field components, and the
latter can be associated with the potential of a polarization source defined for each type.
New multipole expansions (MEs) and local expansions (LEs), as well as the multipole to local (M2L) translation operators are derived for the reaction components, based on which the FMMs for reaction components are then proposed. The resulting FMM for charge interactions in layered media is a combination of using the classic FMM for the free space components and the new FMMs for the reaction field components. With the help of a recurrence formula for the run-time computation of the Sommerfeld-type integrals used in M2L translation operators, pre-computations of a large number of tables are avoided. The new FMMs for the reaction components are found to be much faster than the classic FMM for the free space components due to the separation of equivalent polarization charges and the associated target charges by a material interface. As a result, the FMM for potential in layered media costs almost the same as the classic FMM in the free space case. Numerical results validate the fast convergence of the MEs for the reaction components, and the $O(N)$ complexity of the FMM {\color{black}with a given truncation number $p$} for charge interactions in 3-D layered media.
\end{abstract}
\begin{keyword}
	Fast multipole method, layered media,  Laplace equation, spherical harmonic expansion
\end{keyword}
\end{frontmatter}


\section{Introduction}

Solving the Laplace equation in layered media is connected to many
important applications in science and engineering. For instance, finding the
electric charge distribution over conductors embedded in a layered dielectric
medium has important application in semi-conductor industry, especially in
calculating the capacitance of interconnects (ICs) in very large-scale
integrated (VLSI) circuits for microchip designs (cf. \cite{yu2014advanced,
seidl1988capcal-a,ruehli1973efficient,oh1994capacitance}). Due to complex geometric structure of
the ICs, the charge potential solution to the Laplace equation is usually
solved by an integral method with the Green's function of the layered
media (cf. \cite{oh1994capacitance, zhao1998efficient}), which results
in a huge dense linear algebraic system to be solved by an iterative method such
as GMRES (cf. \cite{campbell1996gmres}), etc. Other applications of the Laplace equation can be found in
medical imaging of brains (cf. \cite{xu2005}), elasticity of composite materials (cf. \cite{babu1994}),
and electrical impedance tomography for geophysical applications (cf. \cite{borcea2002}).

Due to the full matrix resulted from the discretization of integral equations, it will incur an
$O(N^{2})$ computational cost for computing the product of the matrix with a
vector (a basic operation for the GMRES iterative solver). The fast
multipole method (FMM) for the free space Green's function (the Coulomb
potential) has been used in the development of FastCap
(cf. \cite{nabors1991fastcap}) to accelerate this product to $O(N)$. However, the
original FMM of Greengard and Rokhlin (cf. \cite{greengard1987fast,
grengard1988rapid}) is only designed for the free space Green's function. To treat the dielectric material interfaces
in the IC design, unknowns representing the polarization charges from the
dielectric inhomogeneities have to be introduced over the infinite material
interfaces, thus creating unnecessary unknowns and contributing to larger linear systems. These extra unknowns over material interfaces can be avoided by using the Green's function of the layered media in the formulation of the integral equations. To find fast algorithm to solve the discretized linear system, image charges are used to approximate the Green's function of the layered media
\cite{chow1991closed,aksun1996robust, alparslan2010closed}, converting
the reaction potential to the free space Coulomb potential from the charges and their images, thus, the free space FMM can be used
\cite{jandhyala1995multipole,gurel1996electromagnetic, geng2001fast}. Apparently,
this approach is limited to the ability of finding image charge approximation
for the layered media Green's function. Unfortunately, finding such an image approximation can be challenging if not impossible when many layers are present in the problem.

In this paper, we will first derive the multipole expansions (MEs) and local
expansions (LEs) for the reaction components of the layered media Green's function of the Laplace equation. Then, the original FMM for the interactions of charges in free space can be extended to those of charges embedded in layered media. The approach closely follows our recent work for the Helmholtz equation in layered media (cf. \cite{wang2019fast, zhang2018exponential}), where the generating function of the Bessel function (2-D case) or a Funk-Hecke formula (3-D case) were used to connect Bessel functions and plane wave functions. The reason of using Fourier (2-D case) and spherical harmonic (3-D case) expansions of
plane waves is that the Green's function of layered media
has a Sommerfeld-type integral representation involving the plane waves. Even though, the Laplace equation could be considered as a zero limit of the wave number $k$ in the Helmholtz equation,  some special treatments of the  $k\rightarrow0$ limit is required to derive a limit version of the extended Funk-Hecke formula, which is the key in the derivation of MEs, LEs and M2L for the reaction components of the Laplacian Green's function in layered media. Similar to our previous work for the Helmholtz equation in layered media, the potential due to sources embedded in layered media is decomposed into free space and reaction components and equivalent polarization charges are introduced to re-express the reaction components. The FMM in layered media will then consist of classic FMM for the free space components and FMMs for reaction components, using equivalent polarization sources and the new MEs, LEs and M2L translations. Moreover, in order to avoid making pre-computed tables (cf. \cite{wang2019fast}), we introduce a recurrence formula for efficient computation of the Sommerfeld-type integrals used in M2L translation operators. As in the Helmholtz equation case, the FMMs for the reaction field components are much faster than that for the free space components due to the fact that the introduced equivalent polarization charges are always separated from the associated target charges by a material interface. As a result, the new FMM for charges in layered media costs almost the same as the classic FMM for the free space case.

The rest of the paper is organized as follows. In section 2, we will consider the limit case of the extended Funk-Hecke formula introduced in \cite{wang2019fast}, which leads to an spherical harmonic expansion of the exponential kernel in the Sommerfeld-type integral representation of the Green's function. By using this expansion, we present alternative derivation, via the Fourier spectral domain, for the ME, LE and M2L operators of the free space Green's function. The same approach will be then used to derive MEs, LEs and
M2L translation operators for the reaction components of the layered Green's function. In Section 3, after a short discussion
on the Green's function in layered media consisting of free
space and reaction components, we present the formulas for the potential induced by sources embedded in layered media. Then, the concept of equivalent
polarization charge of a source charge is introduced for each type of the reaction
components. The reaction components of the layered Green's function and the potential are then re-expressed by using the equivalent polarization charges. Further, we derive the MEs, LEs and M2L translation operators for the reaction
components based on the new expressions using equivalent polarization charges. Combining the original source charges and the equivalent polarization charges associated to each reaction component, the FMMs for reaction components can be implemented.
Section 4 will give numerical results to show the spectral accuracy and $O(N)$ complexity of the proposed FMM for charge interactions in layered
media. Finally, a conclusion is given in Section 5.

\section{A new derivation for the ME, LE, and M2L operator of the Green's function
of 3-D Laplace equation in free space}

In this section, we first review the multipole and local expansions of the free
space Green's function of the Laplace equation and the corresponding shifting and
translation operators. They are the key formulas in the classic FMM and can be derived by using the addition theorems for Legendre polynomials. Then, we present a
new derivation for them by using the Sommerfeld-type integral representation
of the Green's function. The key expansion formula used in the new derivation is a limiting case of the extended Funk-Hecke formula introduced in \cite{wang2019fast}. This new technique shall be applied to derive MEs and LEs for the reaction components of the layered media Green's function later on.

\subsection{The multipole and local expansions of free space Green's function}
Let us review some addition theorems (cf. \cite{grengard1988rapid, epton1995multipole}), which have been used for the derivation of the ME, LE and corresponding shifting and translation operators of the free space Green's function.
In this paper, we adopt the definition
\begin{equation}\label{sphericalharmonics}
Y_n^m(\theta,\varphi)=(-1)^m\sqrt{\frac{2n+1}{4\pi}\frac{(n-m)!}{(n+m)!}}P_n^{m}(\cos\theta)e^{\ri m\varphi}:=\widehat P_n^{m}(\cos\theta)e^{\ri m\varphi}
\end{equation}
for the spherical harmonics where $P_n^m(x)$ (resp. $\widehat P_n^m(x)$) is the associated (resp. normalized) Legendre function of degree $n$ and order $m$. Recall that
\begin{equation}
P_n^m(x)=(-1)^m(1-x^2)^{\frac{m}{2}}\frac{d^m}{dx^m}P_n(x)
\end{equation}
for integer order $0\leq m\leq n$ and
\begin{equation}
P_n^{-m}=(-1)^m\frac{(n-m)!}{(n+m)!}P_n^m(x), \quad {\rm so}\quad \widehat P_n^{-m}(x)=(-1)^m\widehat P_n^m(x)
\end{equation}
for $0<m\leq n$, where $P_n(x)$ is the Legendre polynomial of degree $n$. The so-defined spherical harmonics constitute a complete orthogonal basis of $L(\mathbb S^2)$ (where $\mathbb S^2$ is the unit spherical surface) and
$$\langle Y_n^m, Y_{n'}^{m'}\rangle=\delta_{nn'}\delta_{mm'},\quad Y_n^{-m}(\theta,\varphi)=(-1)^m\overline{Y_n^m(\theta,\varphi)}.$$
It is worthy to point out that the spherical harmonics with different scaling constant defined as
\begin{equation}
\widetilde Y_n^m(\theta,\varphi)=\sqrt{\frac{(n-|m|)!}{(n+|m|)!}}P_n^{|m|}(\cos\theta)e^{\ri m\varphi}=\ri^{m+|m|}\sqrt{\frac{4\pi}{2n+1}}Y_n^m(\theta,\varphi),\;\;
\end{equation}
have been frequently adopted in published FMM papers (e.g., \cite{greengard1997new,grengard1988rapid}).
By using the spherical harmonics defined in \eqref{sphericalharmonics}, we will re-present the addition theorems derived in \cite{grengard1988rapid, epton1995multipole}. For this purpose, we define constants
\begin{equation}
c_n=\sqrt{\frac{2n+1}{4\pi}},\quad A_n^m=\frac{(-1)^nc_n}{\sqrt{(n-m)!(n+m)!}},\quad |m|\leq  n.
\end{equation}
\begin{theorem}\label{addthmleg}
	{\bf (Addition theorem for Legendre polynomials)} Let $P$ and $Q$ be points with spherical coordinates $(r,\theta,\varphi)$ and $(\rho,\alpha,\beta)$, respectively, and let $\gamma$ be the angle subtended between them. Then
	\begin{equation}
	P_n(\cos\gamma)=\frac{4\pi}{2n+1}\sum\limits_{m=-n}^n\overline{Y_n^{m}(\alpha,\beta)}Y_n^m(\theta,\varphi).
	\end{equation}
\end{theorem}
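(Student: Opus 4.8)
The plan is to exploit the fact that, for a fixed point $Q=(\rho,\alpha,\beta)$, the map $(\theta,\varphi)\mapsto P_n(\cos\gamma)$ is a spherical harmonic of degree exactly $n$, and then to recover its expansion coefficients by orthogonality. First I would recall that the solid harmonic $r^nP_n(\cos\gamma)$ is a homogeneous harmonic polynomial of degree $n$ in the Cartesian coordinates of $P$: it is the degree-$n$ term in the generating-function expansion $\tfrac{1}{|P-Q|}=\sum_{n\geq 0}\tfrac{r^n}{\rho^{n+1}}P_n(\cos\gamma)$, and since $1/|P-Q|$ is harmonic in $P$ away from $Q$ and splits into homogeneous pieces, each piece is separately annihilated by the Laplacian. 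Restricting to the unit sphere, $P_n(\cos\gamma)$ therefore lies in the finite-dimensional span of $\{Y_n^m(\theta,\varphi)\}_{m=-n}^{n}$, so it admits an expansion
$$P_n(\cos\gamma)=\sum_{m=-n}^n c_m(\alpha,\beta)\,Y_n^m(\theta,\varphi)$$
with coefficients depending only on the direction of $Q$.

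Next I would compute the coefficients using the orthonormality $\langle Y_n^m,Y_{n'}^{m'}\rangle=\delta_{nn'}\delta_{mm'}$ recorded above, which gives $c_m(\alpha,\beta)=\int_{\mathbb S^2}P_n(\cos\gamma)\,\overline{Y_n^m(\theta,\varphi)}\,d\Omega(\theta,\varphi)$. The key step is to evaluate this integral via the Funk--Hecke formula: for any degree-$n$ spherical harmonic $Y_n$ one has $\int_{\mathbb S^2}f(\xi\cdot\eta)\,Y_n(\eta)\,d\Omega(\eta)=\lambda_n\,Y_n(\xi)$ with $\lambda_n=2\pi\int_{-1}^1 f(t)P_n(t)\,dt$. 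Taking $\xi$ to be the unit vector toward $Q$ and $\eta$ the unit vector toward $P$, so that $\xi\cdot\eta=\cos\gamma$, choosing $f=P_n$, and applying the formula to the degree-$n$ harmonic $\overline{Y_n^m}$, the eigenvalue is $\lambda_n=2\pi\int_{-1}^1 P_n(t)^2\,dt=\tfrac{4\pi}{2n+1}$ by the standard Legendre normalization. Hence $c_m(\alpha,\beta)=\tfrac{4\pi}{2n+1}\overline{Y_n^m(\alpha,\beta)}$, and substituting back yields the claimed identity.

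I expect the main obstacle to be justifying the two structural facts the short argument rests on, rather than any computation: (i) that $P_n(\cos\gamma)$ is genuinely a degree-$n$ harmonic in $(\theta,\varphi)$, and (ii) the Funk--Hecke eigenvalue evaluation. For (i) the cleanest route is the harmonicity of the solid harmonic $r^nP_n(\cos\gamma)$ already implicit in the FMM generating-function setting. Alternatively, one could bypass Funk--Hecke entirely by invoking rotational invariance: the bilinear kernel $\sum_m\overline{Y_n^m(\alpha,\beta)}Y_n^m(\theta,\varphi)$ is the unique (up to scale) $SO(3)$-invariant reproducing kernel of the degree-$n$ eigenspace, and the constant is pinned down by setting $\gamma=0$ and combining $\sum_m|Y_n^m(\theta,\varphi)|^2=\tfrac{2n+1}{4\pi}$ with $P_n(1)=1$. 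Either way, once the invariance and the single normalization constant are established the identity follows with no further calculation.
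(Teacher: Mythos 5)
Your proof is correct, but there is nothing in the paper to compare it against: Theorem \ref{addthmleg} is stated there as a classical result, quoted from \cite{grengard1988rapid,epton1995multipole} without any proof, so your argument necessarily takes a ``different route'' --- it supplies an actual derivation where the paper supplies a citation. The derivation itself is sound: $r^nP_n(\cos\gamma)$ is indeed a homogeneous harmonic polynomial of degree $n$ in the Cartesian coordinates of $P$ (your generating-function argument, with term-by-term application of $\Delta$ justified by local uniform convergence of the analytic expansion, is fine), so its restriction to $\mathbb S^2$ lies in $\mathrm{span}\{Y_n^m\}_{m=-n}^n$; orthonormality gives $c_m=\int_{\mathbb S^2}P_n(\cos\gamma)\overline{Y_n^m}\,d\Omega$; and Funk--Hecke with $f=P_n$ yields $\lambda_n=2\pi\int_{-1}^1P_n(t)^2dt=\tfrac{4\pi}{2n+1}$, hence the claimed identity. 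Two caveats are worth making explicit. First, to avoid circularity you should note that the Funk--Hecke formula you invoke has standard proofs independent of the addition theorem (e.g.\ via rotation invariance and Schur's lemma on the irreducible degree-$n$ eigenspace, evaluating the eigenvalue at the pole with the zonal harmonic $\widehat P_n^0$); this matters because some textbook treatments derive Funk--Hecke \emph{from} the addition theorem. Second, in your alternative sketch, the normalization $\sum_m|Y_n^m|^2=\tfrac{2n+1}{4\pi}$ is itself the $\gamma=0$ case of the theorem being proved, so it too must be obtained independently (constancy of the diagonal of the reproducing kernel by rotation invariance, then integrating to get the dimension $2n+1$). A pleasant side remark: your use of Funk--Hecke is thematically consonant with the paper, whose own machinery for the layered-media expansions rests on an extended Funk--Hecke formula (Proposition \ref{prop:Funk-Hecke}), so your proof fits the paper's framework more naturally than the addition-theorem references it cites.
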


\begin{theorem}\label{theorem:firstaddition}
	Let $Q=(\rho,\alpha,\beta)$ be the center of expansion of an arbitrary spherical harmonic of negative degree. Let the point $P=(r,\theta,\varphi)$, with $r>\rho$, and $P-Q=(r', \theta', \varphi')$. Then
	\begin{equation*}
	\frac{Y_{n'}^{m'}(\theta', \varphi')}{r'^{n'+1}}=\sum\limits_{n=0}^{\infty}\sum\limits_{m=-n}^n\frac{(-1)^{|m+m'|-|m'|}A_n^mA_{n'}^{m'}\rho^nY_n^{-m}(\alpha,\beta)}{c_n^2A_{n+n'}^{m+m'}}\frac{Y_{n+n'}^{m+m'}(\theta,\varphi)}{r^{n+n'+1}}.
	\end{equation*}
\end{theorem}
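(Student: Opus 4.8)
The plan is to obtain the identity from the classical Laplace expansion of the Coulomb kernel by comparing two different re-expansions of the same kernel about shifted centers. Recall first that Theorem~\ref{addthmleg} together with the generating function of the Legendre polynomials gives, for $r>\rho$, the fully resolved expansion $1/|P-Q|=\sum_{n}\sum_{m}\frac{4\pi}{2n+1}\rho^n\overline{Y_n^m(\alpha,\beta)}\,r^{-(n+1)}Y_n^m(\theta,\varphi)$; this is the only analytic input I would need. The idea is to insert an auxiliary point $S=(s,\mu,\nu)$ with $s$ small and to evaluate $1/|P-Q-S|$ in the regime $r>\rho+s$ in two ways.

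First I would expand the kernel about the center $P-Q=(r',\theta',\varphi')$, treating $S$ as the small displacement. Since $r'>s$, the expansion above gives
\[
\frac1{|(P-Q)-S|}=\sum_{n'=0}^{\infty}\sum_{m'=-n'}^{n'}\frac{4\pi}{2n'+1}\,s^{\,n'}\,\overline{Y_{n'}^{m'}(\mu,\nu)}\,\frac{Y_{n'}^{m'}(\theta',\varphi')}{r'^{\,n'+1}},
\]
so that the target quantity $Y_{n'}^{m'}(\theta',\varphi')/r'^{\,n'+1}$ is exactly the coefficient of the regular solid harmonic $s^{\,n'}\overline{Y_{n'}^{m'}(\mu,\nu)}$ of $S$. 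Second, I would expand the same kernel with $Q+S$ regarded as the source relative to $P$. For $r>|Q+S|$ this produces a sum over $(N,M)$ of the singular harmonics $Y_N^M(\theta,\varphi)/r^{N+1}$ multiplied by the conjugated regular solid harmonic $\overline{R_N^M(Q+S)}$, where $R_N^M(X)=|X|^N Y_N^M(\widehat X)$. The one extra ingredient is the interior addition theorem for regular solid harmonics, $R_N^M(Q+S)=\sum_{n+n'=N,\,m+m'=M}(\text{const})\,R_n^m(Q)R_{n'}^{m'}(S)$, which is a finite identity and can be established by the same generating-function comparison (or by Taylor expansion, since $R_N^M$ is a homogeneous polynomial). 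Conjugating and using $\overline{Y_n^m}=(-1)^mY_n^{-m}$ turns the $Q$-factor into $\rho^n Y_n^{-m}(\alpha,\beta)$ and the $S$-factor into $s^{\,n'}\overline{Y_{n'}^{m'}(\mu,\nu)}$, while homogeneity and azimuthal additivity enforce the selection rules $N=n+n'$, $M=m+m'$.

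Matching the coefficient of $s^{\,n'}\overline{Y_{n'}^{m'}(\mu,\nu)}$ in the two expansions then reproduces the stated double series over $(n,m)$, the constraint $N=n+n'$, $M=m+m'$ fixing the singular harmonic $Y_{n+n'}^{m+m'}(\theta,\varphi)/r^{n+n'+1}$ on the right. The main obstacle—and essentially the only labor—is the constant bookkeeping: combining the factors $4\pi/(2n'+1)$ and $4\pi/(2N+1)$, the normalizations hidden in $c_n$ and $A_n^m$, the coefficients of the interior addition theorem, and the conjugation sign $(-1)^m$ into the precise ratio $A_n^mA_{n'}^{m'}/(c_n^2A_{n+n'}^{m+m'})$ together with the phase $(-1)^{|m+m'|-|m'|}$. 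That phase is the delicate point: it traces back to the reflection relations $P_n^{-m}=(-1)^m\frac{(n-m)!}{(n+m)!}P_n^m$ and $Y_n^{-m}=(-1)^m\overline{Y_n^m}$, and keeping its dependence on the signs of $m$, $m'$ and $m+m'$ straight is where sign errors are most likely. Since Theorem~\ref{theorem:firstaddition} is ultimately a restatement of the classical addition theorem of \cite{grengard1988rapid,epton1995multipole} in the normalization \eqref{sphericalharmonics}, an equivalent shortcut would be to quote their formula in the $\widetilde Y_n^m$ convention and substitute $\widetilde Y_n^m=\ri^{m+|m|}\sqrt{4\pi/(2n+1)}\,Y_n^m$, which reduces the whole proof to tracking how the factors $\ri^{m+|m|}$ and $\sqrt{4\pi/(2n+1)}$ recombine into the constants above.
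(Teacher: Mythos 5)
The paper does not actually prove Theorem~\ref{theorem:firstaddition}: it presents it as review material, restating the classical multipole-shift addition theorem of \cite{grengard1988rapid,epton1995multipole} in the normalization \eqref{sphericalharmonics}, so the paper's implicit justification is precisely the ``shortcut'' in your closing sentence (substitute $\widetilde Y_n^m=\ri^{m+|m|}\sqrt{4\pi/(2n+1)}\,Y_n^m$ into the cited formula and track how the constants recombine). Your main argument is therefore a genuinely different, more self-contained route, and it is sound: expanding $1/|(P-Q)-S|$ once about the center $P-Q$ (a series in regular solid harmonics of $S$, whose $(n',m')$ coefficient is exactly the singular harmonic being expanded) and once about $P$ with source $Q+S$, then inserting the finite addition theorem for regular solid harmonics and matching coefficients, is legitimate; the matching can be made rigorous by projecting onto $\overline{Y_{n'}^{m'}(\mu,\nu)}$ over the sphere $|S|=s$ and comparing powers of $s$, and the selection rules $N=n+n'$, $M=m+m'$ do follow from homogeneity and the azimuthal dependence as you say. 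What your route buys is an actual proof inside the paper's conventions, reducing an infinite, convergence-sensitive identity to the Laplace expansion plus a finite polynomial identity; what the paper's citation route buys is brevity, at the price of exactly the convention/phase bookkeeping you flag as the danger point. Two caveats. First, your ``one extra ingredient'' is (a translate of) Theorem~\ref{theorem:fourthaddition}, which the paper likewise states without proof; your argument is thus a reduction of Theorem~\ref{theorem:firstaddition} to that finite identity, and to be self-contained you must still establish it with explicit constants (doable, as you note, by Taylor expansion of the homogeneous harmonic polynomial $R_N^M(Q+S)$, but that is where the nontrivial combinatorics actually lives). Second, the proposal stops short of the computation confirming that all factors collapse to $(-1)^{|m+m'|-|m'|}A_n^mA_{n'}^{m'}/(c_n^2A_{n+n'}^{m+m'})$; you correctly locate the origin of the phase in $Y_n^{-m}=(-1)^m\overline{Y_n^m}$ and the reflection rule for $P_n^{-m}$, but until that bookkeeping is carried out your argument is a correct template rather than a completed verification of the stated formula.
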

\begin{theorem}\label{theorem:secondaddition}
	Let $Q=(\rho,\alpha,\beta)$ be the center of expansion of an arbitrary spherical harmonic of negative degree. Let the point $P=(r,\theta,\varphi)$, with $r<\rho$, and $P-Q=(r', \theta', \varphi')$. Then
	\begin{equation*}
	\frac{Y_{n'}^{m'}(\theta', \varphi')}{r'^{n'+1}}=\sum\limits_{n=0}^{\infty}\sum\limits_{m=-n}^n\frac{(-1)^{n'+|m|}A_n^mA_{n'}^{m'}\cdot Y_{n+n'}^{m'-m}(\alpha,\beta)}{c_n^2A_{n+n'}^{m'-m}\rho^{n+n'+1}}r^nY_n^{m}(\theta,\varphi).
	\end{equation*}
\end{theorem}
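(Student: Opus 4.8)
The plan is to deduce Theorem~\ref{theorem:secondaddition} directly from Theorem~\ref{theorem:firstaddition}, exploiting the fact that $Y_{n'}^{m'}(\theta',\varphi')/r'^{n'+1}$ is a singular solid harmonic and that the two regimes $r>\rho$ and $r<\rho$ are linked by replacing the argument $P-Q$ with its antipode $Q-P$. The only new analytic input needed is the parity of the spherical harmonics under $(\theta,\varphi)\mapsto(\pi-\theta,\varphi+\pi)$; after that the statement follows by interchanging the source and target points and relabelling the summation index. I would therefore treat Theorem~\ref{theorem:firstaddition} as a black box and concentrate on the symmetry transformations.

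First I would record the parity relation. Since $P_n^m(-x)=(-1)^{n+m}P_n^m(x)$ and $e^{\ri m(\varphi+\pi)}=(-1)^m e^{\ri m\varphi}$, the definition \eqref{sphericalharmonics} yields $Y_n^m(\pi-\theta,\varphi+\pi)=(-1)^nY_n^m(\theta,\varphi)$, so the singular solid harmonic obeys $Y_{n'}^{m'}(\theta',\varphi')/r'^{n'+1}=(-1)^{n'}\,Y_{n'}^{m'}(\pi-\theta',\varphi'+\pi)/r'^{n'+1}$. Geometrically, the vector $Q-P=-(P-Q)$ has the same length $r'$ as $P-Q$ but the antipodal angles $(\pi-\theta',\varphi'+\pi)$.

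Next I would apply Theorem~\ref{theorem:firstaddition} with the roles of $P$ and $Q$ interchanged, taking $Q=(\rho,\alpha,\beta)$ as the evaluation point and $P=(r,\theta,\varphi)$ as the centre of expansion. Then the hypothesis (radius of the evaluation point exceeds radius of the centre) reads $\rho>r$, which is exactly our standing assumption $r<\rho$. In this configuration Theorem~\ref{theorem:firstaddition} expands $Y_{n'}^{m'}(\pi-\theta',\varphi'+\pi)/r'^{n'+1}$ as a sum over $n,m$ of $r^nY_n^{-m}(\theta,\varphi)\cdot Y_{n+n'}^{m+m'}(\alpha,\beta)/\rho^{n+n'+1}$ with coefficient $(-1)^{|m+m'|-|m'|}A_n^mA_{n'}^{m'}/(c_n^2A_{n+n'}^{m+m'})$; multiplying through by $(-1)^{n'}$ restores the left-hand side to $Y_{n'}^{m'}(\theta',\varphi')/r'^{n'+1}$, as required.

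It remains to recast the right-hand side into the stated form, and this is the step where I expect the sign bookkeeping to be the main obstacle. I would substitute $m\mapsto -m$ in the (symmetric) inner sum and use $A_n^{-m}=A_n^m$, so that $Y_n^{-m}(\theta,\varphi)\mapsto Y_n^m(\theta,\varphi)$, $A_{n+n'}^{m+m'}\mapsto A_{n+n'}^{m'-m}$, and $Y_{n+n'}^{m+m'}(\alpha,\beta)\mapsto Y_{n+n'}^{m'-m}(\alpha,\beta)$, which already match the angular factors of Theorem~\ref{theorem:secondaddition}. The genuinely delicate point is the sign: the prefactor becomes $(-1)^{n'}(-1)^{|m'-m|-|m'|}$, and I would close the proof by verifying the parity identity $|m'-m|-|m'|\equiv|m|\pmod 2$ — immediate from $|k|\equiv k\pmod 2$ applied to $k=m'-m,\,m',\,m$ — so that $(-1)^{n'}(-1)^{|m'-m|-|m'|}=(-1)^{n'+|m|}$, reproducing the claimed coefficient $(-1)^{n'+|m|}A_n^mA_{n'}^{m'}Y_{n+n'}^{m'-m}(\alpha,\beta)/(c_n^2A_{n+n'}^{m'-m}\rho^{n+n'+1})$. (A self-contained derivation from the Gegenbauer addition theorem for $1/|P-Q|$ is also available, but the inversion argument above is shorter given that Theorem~\ref{theorem:firstaddition} is already established.)
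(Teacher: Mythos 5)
Your proof is correct. Note first that the paper itself contains no proof of this statement: Theorems \ref{theorem:firstaddition}--\ref{theorem:fourthaddition} are simply re-presented, in the paper's normalization of the spherical harmonics, from the cited references \cite{grengard1988rapid,epton1995multipole}, so there is no internal argument to compare against. Your reduction of Theorem \ref{theorem:secondaddition} to Theorem \ref{theorem:firstaddition} is sound: the antipodal identity $Y_n^m(\pi-\theta,\varphi+\pi)=(-1)^nY_n^m(\theta,\varphi)$ follows from $P_n^m(-x)=(-1)^{n+m}P_n^m(x)$ and $e^{\ri m(\varphi+\pi)}=(-1)^me^{\ri m\varphi}$ exactly as you state (and is consistent with the two parity identities the paper itself invokes in Section 3); interchanging the roles of $P$ and $Q$ turns the hypothesis $r>\rho$ of Theorem \ref{theorem:firstaddition} into $\rho>r$, which is precisely the standing assumption, and the expansion argument becomes $Q-P=(r',\pi-\theta',\varphi'+\pi)$, so convergence is inherited rather than needing a separate argument; the substitution $m\mapsto-m$ is legitimate because the inner sum runs over the symmetric range $-n\le m\le n$ and $A_n^{-m}=A_n^m$ by the definition of $A_n^m$. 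The sign bookkeeping also checks out: $|k|\equiv k\pmod 2$ gives $|m'-m|-|m'|\equiv -m\equiv |m|\pmod 2$, hence $(-1)^{n'}(-1)^{|m'-m|-|m'|}=(-1)^{n'+|m|}$, reproducing the stated coefficient. Compared with the standard treatments in the cited literature, which derive the interior ($r<\rho$) and exterior ($r>\rho$) expansions separately from the Legendre/Gegenbauer expansion of $1/|P-Q|$ and Hobson-type differentiation identities, your route obtains the interior expansion essentially for free from the exterior one; the only cost is that the argument is conditional on Theorem \ref{theorem:firstaddition}, which you correctly and explicitly treat as a black box.
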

\begin{theorem}\label{theorem:fourthaddition}
	Let $Q=(\rho,\alpha,\beta)$ be the center of expansion of an arbitrary spherical harmonic of negative degree. Let the point $P=(r,\theta,\varphi)$ and $P-Q=(r', \theta', \varphi')$. Then
	\begin{equation*}
	r'^{n'}Y_{n'}^{m'}(\theta', \varphi')=\sum\limits_{n=0}^{n'}\sum\limits_{m=-n}^n\frac{(-1)^{n-|m|+|m'|-|m'-m|}c_{n'}^2 A_n^mA_{n'-n}^{m'-m}\cdot \rho^nY_n^{m}(\alpha,\beta) }{c_{n}^2c_{n'-n}^2A_{n'}^{m'}r^{n-n'}}Y_{n'-n}^{m'-m}(\theta,\varphi),
	\end{equation*}
	where $A_n^m=0$, $Y_n^m(\theta,\varphi)\equiv 0$ for $|m|>n$ is used.
\end{theorem}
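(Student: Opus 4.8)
The plan is to recognize the left-hand side as a \emph{regular solid harmonic} and to exploit the fact that such functions are homogeneous harmonic polynomials, for which a translation of the argument produces a \emph{finite} expansion. Write $\mathbf{r}$ and $\mathbf{q}$ for the Cartesian position vectors of $P=(r,\theta,\varphi)$ and $Q=(\rho,\alpha,\beta)$, so that $P-Q$ has position vector $\mathbf{r}-\mathbf{q}$, and set $R_n^m(\mathbf{x}):=|\mathbf{x}|^{n}\,Y_n^m(\theta_{\mathbf{x}},\varphi_{\mathbf{x}})$. With the normalization \eqref{sphericalharmonics}, each $R_n^m$ is a homogeneous harmonic polynomial of degree $n$ in the components of $\mathbf{x}$, and the quantity $r'^{n'}Y_{n'}^{m'}(\theta',\varphi')$ is exactly $R_{n'}^{m'}(\mathbf{r}-\mathbf{q})$. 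Since the Laplacian is translation invariant and the total polynomial degree is preserved, $R_{n'}^{m'}(\mathbf{r}-\mathbf{q})$, viewed as a function of $\mathbf{r}$, is a harmonic polynomial of degree $n'$ and therefore lies in $\bigoplus_{j=0}^{n'}\mathcal{H}_j$, where $\mathcal{H}_j=\mathrm{span}\{R_j^k:|k|\le j\}$ is the space of homogeneous harmonic polynomials of degree $j$. This already forces the expansion to terminate, explaining the finite outer sum $n=0,\dots,n'$ (with $j=n'-n$).

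First I would fix the \emph{structure} of the coefficients. Write
\[
R_{n'}^{m'}(\mathbf{r}-\mathbf{q})=\sum_{j=0}^{n'}\sum_{k=-j}^{j}c_{jk}(\mathbf{q})\,R_j^k(\mathbf{r}).
\]
Joint homogeneity of degree $n'$ in $(\mathbf{r},\mathbf{q})$ forces each $c_{jk}(\mathbf{q})$ to be homogeneous of degree $n'-j$ in $\mathbf{q}$. Applying $\Delta_{\mathbf{q}}$ and using that $R_{n'}^{m'}(\mathbf{r}-\mathbf{q})$ is also harmonic in $\mathbf{q}$, together with the linear independence of the $R_j^k(\mathbf{r})$, shows each $c_{jk}$ is itself harmonic, hence $c_{jk}\in\mathcal{H}_{n'-j}$. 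Finally, a rotation about the $z$-axis multiplies the left-hand side by $e^{\ri m'\psi}$ and $R_j^k(\mathbf{r})$ by $e^{\ri k\psi}$, so only the order $m'-k$ can survive in $c_{jk}$; thus $c_{jk}(\mathbf{q})$ is a scalar multiple of $R_{n'-j}^{m'-k}(\mathbf{q})$. Setting $j=n'-n$ and $k=m'-m$ reproduces exactly the product $\rho^nY_n^m(\alpha,\beta)\cdot r^{n'-n}Y_{n'-n}^{m'-m}(\theta,\varphi)$ in the statement; the mismatch between the natural range $|k|\le n'-n$ and the stated range $-n\le m\le n$ is absorbed by the convention (quoted in the theorem) that $A_n^m=0$ and $Y_n^m\equiv0$ whenever $|m|>n$.

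It remains to pin down the scalar constant multiplying $R_n^m(\mathbf{q})\,R_{n'-n}^{m'-m}(\mathbf{r})$. I would evaluate the identity in a degenerate geometry, placing $\mathbf{q}$ on the positive $z$-axis so that $\alpha=0$; then $Y_n^m(0,\beta)=c_n\delta_{m0}$, only the $m=0$ (equivalently $k=m'$) terms survive, and the double sum collapses to a one-dimensional identity in $\rho$ and the axial coordinate of $\mathbf{r}$ that is equivalent to the classical Gegenbauer/binomial expansion of $P_{n'}^{m'}$ under an axial shift. This fixes the constant for each $n$, after which the full result is rewritten in the normalization \eqref{sphericalharmonics} using the definitions of $c_n$ and $A_n^m$ and the phase relation $\widehat P_n^{-m}=(-1)^m\widehat P_n^m$.

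The structural steps are routine; the main obstacle is the constant, and in particular the sign factor $(-1)^{n-|m|+|m'|-|m'-m|}$, which originates purely from the Condon--Shortley phases carried by $P_n^m$ when the orders $m$, $m'$, and $m'-m$ may be negative. Tracking these signs through the normalization conversion is the only delicate bookkeeping. A faster alternative, well suited to this paper since the alternatively scaled harmonics $\widetilde Y_n^m$ introduced above are noted to be standard in the FMM literature, is to start from the known regular-harmonic addition theorem in the $\widetilde Y$ normalization and substitute $\widetilde Y_n^m=\ri^{m+|m|}\sqrt{4\pi/(2n+1)}\,Y_n^m$ throughout; all constants, including the sign, then emerge by elementary algebra with factorials and powers of $\ri$, trading the one-dimensional constant computation for careful index manipulation.
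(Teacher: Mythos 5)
Your structural reduction is correct: with your solid harmonics $R_n^m(\bs x)=|\bs x|^nY_n^m$, the left-hand side is $R_{n'}^{m'}(\bs r-\bs q)$, a harmonic polynomial of degree $n'$ in $\bs r$; its homogeneous pieces are individually harmonic; the coefficients $c_{jk}(\bs q)$ are homogeneous harmonic polynomials of degree $n'-j$ in $\bs q$; and the $z$-rotation weight argument forces $c_{jk}(\bs q)=\lambda_{jk}R_{n'-j}^{m'-k}(\bs q)$ for scalars $\lambda_{jk}$. The gap is in the step that is supposed to pin down these scalars. Placing $\bs q$ on the positive $z$-axis annihilates every basis product $R_{n'-j}^{m'-k}(\bs q)\,R_j^k(\bs r)$ with $k\neq m'$, precisely because $Y_N^M(0,\beta)=c_N\delta_{M0}$. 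The collapsed axial (Gegenbauer/binomial) identity therefore determines only the constants $\lambda_{j,m'}$ --- in the theorem's indexing, only the terms with $m=0$ --- and carries no information about $\lambda_{jk}$ for $k\neq m'$. This is not a removable technicality: for any $k\neq m'$ with $|k|\le j$ and $|m'-k|\le n'-j$, the function $F(\bs r,\bs q)=R_{n'-j}^{m'-k}(\bs q)R_j^k(\bs r)$ satisfies every constraint you have imposed (harmonic in each variable, jointly homogeneous of degree $n'$, simultaneous $z$-rotation weight $m'$, and identically zero when $\bs q$ lies on the axis), so adding an arbitrary multiple of it to the right-hand side is fully consistent with your argument. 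Hence ``this fixes the constant for each $n$'' is false as stated: it fixes one column of the doubly indexed array $\{\lambda_{jk}\}$.

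To close the gap you need an ingredient that couples different orders $k$. Three standard options: (i) apply the $\bs q$-ladder operators $\partial_{q_x}\pm\ri\,\partial_{q_y}$ and $\partial_{q_z}$, which send $R_N^M(\bs q)$ to multiples of $R_{N-1}^{M\pm1}(\bs q)$ and $R_{N-1}^{M}(\bs q)$ while turning the left-hand side into lower-degree instances of the same identity; then induct on $n'$ and finish by evaluating at $\bs q=\bs 0$, since a polynomial in $\bs q$ with identically vanishing gradient and vanishing value at the origin is zero; (ii) use the full rotation group (Wigner $D$-matrices) rather than only rotations about the $z$-axis; or (iii) expand $R_{n'}^{m'}(\bs r-\bs q)$ in its finite Taylor series in $\bs q$ and compute the harmonic projections of the graded pieces. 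Your fallback route --- rewriting the known addition theorem for regular solid harmonics from the $\widetilde Y_n^m$ normalization via $\widetilde Y_n^m=\ri^{m+|m|}\sqrt{4\pi/(2n+1)}\,Y_n^m$ --- is sound, and it is in effect all the paper does: Theorem \ref{theorem:fourthaddition} is presented there as a re-statement, in the normalization \eqref{sphericalharmonics}, of the addition theorems of \cite{grengard1988rapid,epton1995multipole}, with no independent proof supplied.
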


Denote by $(r,\theta, \varphi)$ and $(r',\theta',\varphi')$ the spherical coordinates of given points $\bs r, \bs r'\in\mathbb R^3$. The law of cosines gives
\begin{equation}\label{cosineslaw}
|\bs r-\bs r'|^2=r^2+(r')^2-2r r'\cos\gamma,
\end{equation}
where
\begin{equation}
\cos\gamma=\cos\theta\cos\theta'+\sin\theta\sin\theta'\cos(\varphi-\varphi').
\end{equation}
Then, the Green's function of the Laplace equation in free space is given by
\begin{equation}
G(\bs r, \bs r')=\frac{1}{|\bs r-\bs r'|}=\frac{1}{r\sqrt{1-2\mu\cos\gamma+\mu^2}}=\frac{1}{r'\sqrt{1-2\frac{\cos\gamma}{\mu}+\frac{1}{\mu^2}}},
\end{equation}
where $\mu=r'/r$ and the scaling constant $1/4\pi$ has been omitted through out this paper. Furthermore, we have the following Taylor expansions
\begin{equation}\label{legendreexp}
\frac{1}{r\sqrt{1-2\mu\cos\gamma+\mu^2}}=\sum\limits_{n=0}^{\infty}P_n(\cos\gamma)\frac{\mu^n}{r}=\sum\limits_{n=0}^{\infty}P_n(\cos\gamma)\frac{r'^n}{r^{n+1}}, \;\; \mu=\frac{r'}{r}<1,
\end{equation}
and
\begin{equation}\label{legendreexp2}
\frac{1}{r'\sqrt{1-2\frac{\cos\gamma}{\mu}+\frac{1}{\mu^2}}}=\sum\limits_{n=0}^{\infty}P_n(\cos\gamma)\frac{1}{r'\mu^n}=\sum\limits_{n=0}^{\infty}P_n(\cos\gamma)\frac{r^{n}}{r'^{n+1}},\;\; \mu=\frac{r'}{r}>1.
\end{equation}
Straightforwardly, we have error estimates
\begin{equation}\label{meerror}
\left|\frac{1}{|\bs r-\bs r'|}-\sum\limits_{n=0}^{p}\frac{P_n(\cos\gamma_j)(r')^n}{r^{n+1}}\right|\leq \frac{1}{r-r'}\Big(\frac{r'}{r}\Big)^{p+1}, \quad r>r',
\end{equation}
and
\begin{equation}\label{leerror}
\left|\frac{1}{|\bs r-\bs r'|}-\sum\limits_{n=0}^{\infty}P_n(\cos\gamma_j)\frac{r^{n}}{(r')^{n+1}}\right|\leq\frac{1}{r'-r}\Big(\frac{r}{r'}\Big)^{p+1}, \quad r>r',
\end{equation}
by using the fact $|P_n(x)|\leq 1$ for all $x\in[-1, 1]$.
\begin{figure}[ht!]\label{3dspherical}
	\centering
	\includegraphics[scale=0.9]{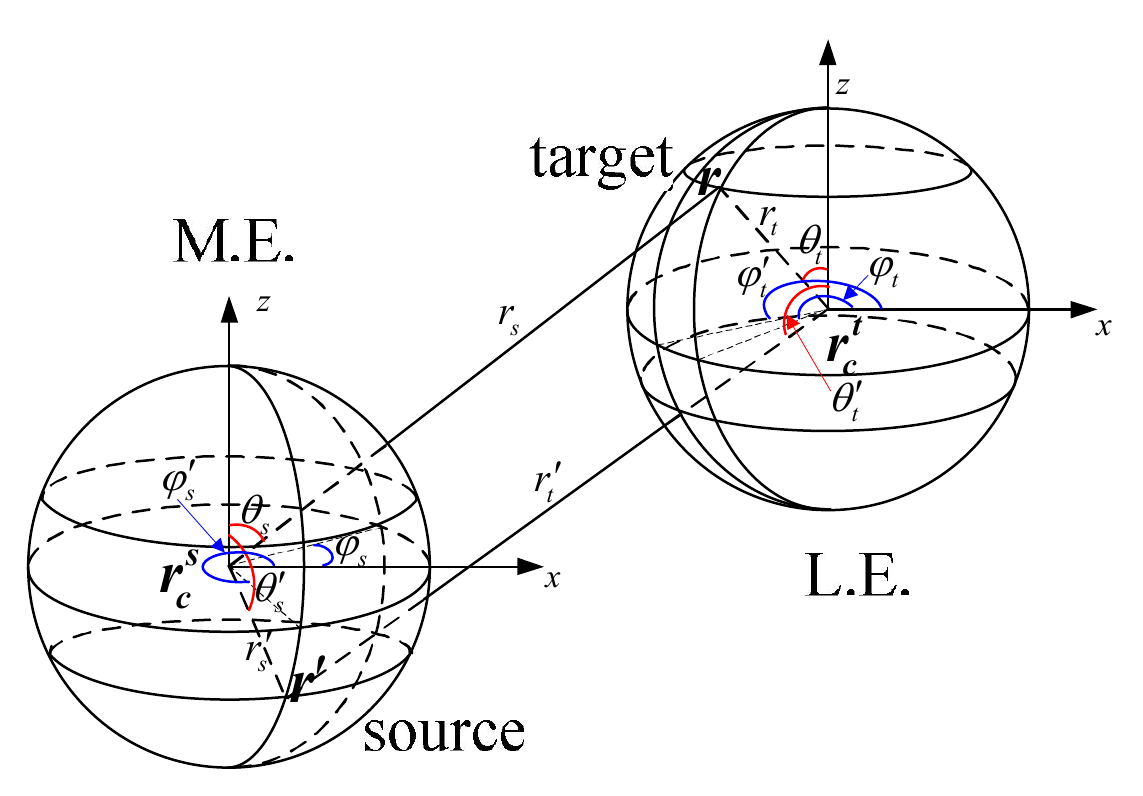}
	\caption{Spherical coordinates used in multipole and local expansions.}
\end{figure}

Based on the discussion above, we are ready to present ME, LE and corresponding shifting and translation operators of the free space Green's function. Let $\bs r_c^s$ and $\bs r_c^t$ be source and target centers close to source $\bs r'$ and target $\bs r$, i.e,  $|\bs r'-\bs r_c^s|<|\bs r-\bs r_c^s|$ and $|\bs r'-\bs r_c^t|>|\bs r-\bs r_c^t|$. Following the derivation in \eqref{cosineslaw}-\eqref{legendreexp2} we have Taylor expansions
\begin{equation}\label{expansionbeforeme}
\frac{1}{|\bs r-\bs r'|}=\frac{1}{|(\bs r-\bs r_c^s)-(\bs r'-\bs r_c^s)|}=\sum\limits_{n=0}^{\infty}\frac{P_n(\cos\gamma_s)}{ r_s}\Big(\frac{r'_s}{ r_s}\Big)^n,
\end{equation}
and
\begin{equation}\label{localexpansionbeforeme}
\frac{1}{|\bs r-\bs r'|}=\frac{1}{|(\bs r-\bs r_c^t)-(\bs r'-\bs r_c^t)|}=\sum\limits_{n=0}^{\infty}\frac{P_n(\cos\gamma_t)}{r_t'}\Big(\frac{r_t}{r_t'}\Big)^n,
\end{equation}
where  $(r_s, \theta_s,\varphi_s)$, $(r_t,\theta_t,\varphi_t)$ are the spherical coordinates of $\bs r-\bs r_c^s$ and $\bs r-\bs r_c^t$, $(r'_s, \theta'_s,\varphi'_s)$, $(r'_t,\theta'_t,\varphi'_t)$ are the spherical coordinates of $\bs r'-\bs r_c^s$ and $\bs r'-\bs r_c^t$( see Fig. \ref{3dspherical}) and
\begin{equation}
\begin{split}
&\cos\gamma_s=\cos\theta_s\cos\theta'_s+\sin\theta_s\sin\theta'_s\cos(\varphi_s-\varphi'_s),\\
&\cos\gamma_t=\cos\theta_t\cos\theta_t'+\sin\theta_t\sin\varphi_t'\cos(\varphi_t-\varphi_t').
\end{split}
\end{equation}
Note that $P_n(\cos\gamma_s)$, $P_n(\cos\gamma_t)$ still mix the source and target information ($\bs r$ and $\bs r'$). Applying Legendre addition theorem \ref{addthmleg} to expansions \eqref{expansionbeforeme} and \eqref{localexpansionbeforeme} gives a ME
\begin{equation}\label{mefreespace}
\frac{1}{|\bs r-\bs r'|}=\sum\limits_{n=0}^{\infty}\sum\limits_{m=-n}^nM_{nm}r_s^{-n-1}Y_n^m(\theta_s,\varphi_s),
\end{equation}
and a LE
\begin{equation}\label{lefreespace}
\frac{1}{|\bs r-\bs r'|}=\sum\limits_{n=0}^{\infty}\sum\limits_{m=-n}^nL_{nm}r^n_tY_n^m(\theta_t,\varphi_t),
\end{equation}
where
\begin{equation}\label{freespaceexpcoef}
M_{nm}=c_n^{-2}r'^n_s\overline{Y_n^{m}(\theta'_s,\varphi'_s)},\quad L_{nm}=c_n^{-2}r'^{-n-1}_t\overline{Y_n^{m}(\theta'_t,\varphi'_t)}.
\end{equation}

The FMM also need shifting and translation operators between expansions. Applying the addition Theorem \ref{theorem:secondaddition} to expansion functions in ME \eqref{mefreespace} provides a translation from ME \eqref{mefreespace} to LE \eqref{lefreespace} as follows
\begin{equation}\label{metole}
L_{nm}=\sum\limits_{n'=0}^{\infty}\sum\limits_{m'=-n'}^{n'}\frac{(-1)^{n'+|m|}A_{n'}^{m'}A_n^mY_{n+n'}^{m'-m}(\theta_{st}, \varphi_{st})}{c_{n'}^2A_{n+n'}^{m'-m}r_{st}^{n+n'+1}}M_{n'm'},
\end{equation}
where $(r_{st}, \theta_{st}, \varphi_{st})$ is the spherical coordinate of $\bs r_c^s-\bs r_c^t$.
Similarly, the following center shifting operators for ME and LE,
\begin{eqnarray}
\displaystyle\tilde{M}_{nm}
=\sum\limits_{n'=0}^{n}\sum\limits_{m'=-n'}^{n'}\frac{(-1)^{|m|-|m-m'|}A_{n'}^{m'}A_{n-n'}^{m-m'}r_{ss}^{n'}Y_{n'}^{-m'}(\theta_{ss}, \varphi_{ss})}{c_{n'}^2A_{n}^{m}}M_{n-n',m-m'},\label{metome}\\
\displaystyle\tilde L_{nm}=\sum\limits_{n'=n}^{\infty}\sum\limits_{m'=-n'}^{n'} \frac{(-1)^{n'-n-|m'-m|+|m'|-|m|}c_{n'}^2A_{n'-n}^{m'-m}A_n^mr_{tt}^{n'-n}Y_{n'-n}^{m'-m}(\theta_{tt}, \varphi_{tt})}{c_{n'-n}^2c_n^2A_{n'}^{m'}}L_{n'm'},\label{letole}
\end{eqnarray}
can be derived by using addition Theorem \ref{theorem:firstaddition} and \ref{theorem:fourthaddition}. Here, $(r_{ss}, \theta_{ss}, \varphi_{ss})$ and $(r_{tt},\theta_{tt}, \varphi_{tt})$ are the spherical coordinates of $\bs r_c^s-\tilde{\bs r}_c^s$ and $\bs r_c^t-\tilde{\bs r}_c^t$,
\begin{equation}
\tilde{M}_{nm}=c_n^{-2}\tilde r'^n_s\overline{Y_{n}^{m}(\tilde\theta'_s,\tilde\varphi'_s)},\quad \tilde L_{nm}=c_n^{-2}\tilde r'^{-n-1}_t\overline{Y_{n}^{m}(\tilde\theta'_t,\tilde\varphi'_t)},
\end{equation}
are the ME and LE coefficients with respect to new centers $\tilde{\bs r}_c^s$ and $\tilde{\bs r}_c^{t}$, respectively.

A very important fact in the expansions \eqref{mefreespace}-\eqref{lefreespace} is that the source and target coordinates are separated. It is one of the key features for the compression in the FMM (cf. \cite{greengard1987fast,greengard1997new}). Besides using the addition theorems, this target/source separation can also be achieved in the Fourier spectral domain. We shall give a new derivation for \eqref{mefreespace} and \eqref{lefreespace} by using the integral representation of $1/|\bs r-\bs r'|$.  More importantly, this methodology can be further applied to derive multipole and local expansions for the reaction components of the Green's function in layered media to be discussed in section 3.

\subsection{A new derivation of the multipole and local expansions}
For the Green's function $G(\bs r, \bs r')$, we have the well known identity
\begin{equation}\label{freegreenintegral}
\frac{1}{|\bs r-\bs r'|}
=\frac{1}{2\pi}\int_{0}^{\infty}\int_{0}^{2\pi}e^{\ri k_{\rho}((x-x')\cos\alpha+(y-y')\sin\alpha)-k_{\rho}|z-z'|}d\alpha dk_{\rho}.
\end{equation}
By this identity, we straightforwardly have source/target separation in spectral domain as follows
\begin{equation}
\label{positivecase}
\begin{split}
\frac{1}{|\bs r-\bs r'|}=\frac{1}{2\pi}\int_{0}^{\infty}\int_{0}^{2\pi}e^{\ri k_{\rho}\bs k_0\cdot (\bs r-\bs r_c^s)}e^{-\ri k_{\rho}\bs k_0\cdot(\bs r'-\bs r_c^s)}d\alpha dk_{\rho},\\
\frac{1}{|\bs r-\bs r'|}=\frac{1}{2\pi}\int_{0}^{\infty}\int_{0}^{2\pi}e^{\ri k_{\rho}\bs k_0\cdot (\bs r-\bs r_c^t)}e^{-\ri k_{\rho}\bs k_0\cdot(\bs r'-\bs r_c^t)}d\alpha dk_{\rho},
\end{split}
\end{equation}
for $z\geq z'$ where
\begin{equation}
\bs k_0=(\cos\alpha, \sin\alpha, \ri),
\label{k0}
\end{equation}
and without loss of generality, here we only consider the case $z\geq z'$ as an example.

A FMM for the Helmholtz kernel in layered media has been proposed in \cite{wang2019fast} based on a similar source/target separation in the spectral domain. One of the key ingredients is the following extension of the well-known Funk-Hecke formula (cf. \cite{watson,martin2006multiple}).
\begin{proposition}\label{prop:Funk-Hecke}
	Given $\bs r=(x, y, z)\in \mathbb R^3$, $k>0$, $\alpha\in[0, 2\pi)$ and denoted by $(r,\theta,\varphi)$ the spherical coordinates of $\bs r$, $\bs k=(\sqrt{k^2-k_z^2}\cos\alpha, \sqrt{k^2-k_z^2}\sin\alpha, k_z)$ is a vector of complex entries. Choosing branch \eqref{branch} for $\sqrt{k^2-k_z^2}$ in $e^{\ri \bs k\cdot{\bs r}}$ and $\widehat P_n^m(\frac{k_z}{k})$, then
	\begin{equation}\label{extfunkhecke}
	e^{\ri \bs k\cdot{\bs r}}=\sum\limits_{n=0}^{\infty}\sum\limits_{m=-n}^n A_{n}^m(\bs r)\ri^n\widehat{P}_n^m\Big(\frac{k_z}{k}\Big)e^{-\ri m\alpha}=\sum\limits_{n=0}^{\infty}\sum\limits_{m=-n}^n \overline{A_{n}^m(\bs r)}\ri^n\widehat{P}_n^m\Big(\frac{k_z}{k}\Big)e^{\ri m\alpha},
	\end{equation}
	holds for all $k_z\in\mathbb C$, where
	$$A_{n}^m(\bs r)=4\pi j_n(kr)Y_n^m(\theta,\varphi).$$
\end{proposition}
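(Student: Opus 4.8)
The plan is to obtain the formula first for real $k_z\in(-k,k)$, where $\bs k/k$ is a genuine real unit vector, and then to extend it to all $k_z\in\mathbb C$ by analytic continuation in $k_z$. For the real case I would start from the classical Funk--Hecke (Rayleigh) plane-wave expansion: if $\hat{\bs k}$ is the real unit vector with spherical angles $(\vartheta,\alpha)$, then
$$e^{\ri k\hat{\bs k}\cdot\bs r}=4\pi\sum_{n=0}^{\infty}\sum_{m=-n}^{n}\ri^n j_n(kr)\,Y_n^m(\theta,\varphi)\,\overline{Y_n^m(\vartheta,\alpha)},$$
which converges absolutely and locally uniformly (cf. \cite{watson, martin2006multiple}). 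Specializing $\cos\vartheta=k_z/k$ and $\sin\vartheta=\sqrt{k^2-k_z^2}/k$ for $k_z\in(-k,k)$ makes $\bs k=k\hat{\bs k}$ coincide with the vector in the statement, and since $\widehat P_n^m$ is real on $(-1,1)$ we get $\overline{Y_n^m(\vartheta,\alpha)}=\widehat P_n^m(k_z/k)e^{-\ri m\alpha}$. Recalling $A_n^m(\bs r)=4\pi j_n(kr)Y_n^m(\theta,\varphi)$, this reproduces \eqref{extfunkhecke} for real $k_z\in(-k,k)$.

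Next I would fix $\bs r$ and $\alpha$ and regard both sides as functions of the single complex variable $k_z$, using the prescribed branch \eqref{branch} of $\sqrt{k^2-k_z^2}$ consistently in the exponent and in the factor $(1-(k_z/k)^2)^{m/2}=(\sqrt{k^2-k_z^2}/k)^m$ hidden inside $\widehat P_n^m(k_z/k)$. The left-hand side $e^{\ri\bs k\cdot\bs r}=\exp\!\big(\ri\sqrt{k^2-k_z^2}\,(x\cos\alpha+y\sin\alpha)+\ri k_z z\big)$ is manifestly analytic in $k_z$ away from the branch cut. For the right-hand side I would show the double series defines an analytic function by establishing local uniform convergence on compact subsets of the cut plane: the spherical Bessel factor $j_n(kr)$ decays superexponentially in $n$, while $|\widehat P_n^m(k_z/k)|$ grows at most exponentially in $n$ when $k_z/k$ ranges over a compact set, so the product is summable and the limit is holomorphic by Weierstrass' theorem.

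With analyticity of both sides in hand, I would invoke the identity theorem: the two analytic functions agree on the real segment $(-k,k)$, which has an accumulation point in the domain, hence they coincide on the whole connected component of the cut plane containing that segment. Choosing the branch \eqref{branch} so that the values of interest --- in the Laplace / $k\to0$ setting these are the purely imaginary $k_z=\ri k_\rho$ reflected in \eqref{k0} --- lie in this same component then yields \eqref{extfunkhecke} for those $k_z$. Finally, the second equality in \eqref{extfunkhecke} follows term by term from the symmetries $\widehat P_n^{-m}(x)=(-1)^m\widehat P_n^m(x)$ and $Y_n^{-m}=(-1)^m\overline{Y_n^m}$, which give $\overline{A_n^{-m}(\bs r)}=(-1)^m A_n^m(\bs r)$ (using that $j_n(kr)$ is real): replacing $m$ by $-m$ in the second sum turns it into the first.

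I expect the main obstacle to be the analytic-continuation step --- specifically, establishing the locally uniform convergence of the right-hand series for \emph{complex} argument $k_z/k$, which requires quantitative growth bounds on the associated Legendre functions off $[-1,1]$ to dominate them against the Bessel decay, together with a careful verification that the branch cut of $\sqrt{k^2-k_z^2}$ is shared by the exponential and by $\widehat P_n^m(k_z/k)$ and that the target values of $k_z$ remain in the connected component where equality was first checked.
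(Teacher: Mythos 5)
The paper itself contains no proof of Proposition \ref{prop:Funk-Hecke}: it is imported verbatim from \cite{wang2019fast}, so your proposal has to stand on its own. Your overall strategy --- the classical Rayleigh/Funk--Hecke expansion on the real segment $k_z\in(-k,k)$, locally uniform convergence of the series in the complex variable (factorial decay of $j_n(kr)$ dominating exponential growth of the Legendre factors), analytic continuation, and the symmetries $\widehat P_n^{-m}=(-1)^m\widehat P_n^m$, $Y_n^{-m}=(-1)^m\overline{Y_n^m}$ for the second equality --- is the natural route, and those individual ingredients are correct.

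There is, however, a genuine gap in the continuation step as you wrote it. For the branch \eqref{branch}, the cut of $\sqrt{k^2-k_z^2}$ is precisely the segment $[-k,k]$: for $k_z=t\pm\ri\epsilon$ with $t\in(-k,k)$ one has $\theta_1\to 0$ and $\theta_2\to\pm\pi$, so the limit from above is $+\sqrt{k^2-t^2}$ while the limit from below is $-\sqrt{k^2-t^2}$ (and the convention $-\pi<\theta_2\le\pi$ assigns the upper limit to the segment itself); by contrast this branch is continuous across the real rays $|k_z|>k$. Thus the set on which you verify the base case is not ``a set with an accumulation point in the domain'' --- it is the branch cut, i.e.\ boundary of the domain of analyticity --- and the identity theorem cannot be invoked in the form you state. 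Two repairs are available. (i) Run your entire argument with the \emph{principal} branch of $\sqrt{k^2-k_z^2}$, whose cuts are the rays $\{|{\rm Re}\,k_z|\ge k,\ {\rm Im}\,k_z=0\}$, so that $(-k,k)$ is interior to the (connected) domain and your identity-theorem argument is airtight; then transfer to branch \eqref{branch}: the two branches coincide on the closed upper half plane (in particular at the values $k_z=\ri k_\rho$ used in \eqref{k0}), and on the lower half plane they differ by the sign flip $\sqrt{k^2-k_z^2}\mapsto-\sqrt{k^2-k_z^2}$, which both sides of \eqref{extfunkhecke} absorb identically, since it amounts to $\alpha\mapsto\alpha+\pi$ and multiplies the exponential and each term $\widehat P_n^m(k_z/k)e^{-\ri m\alpha}$ by the same factor $(-1)^m$. (ii) Alternatively, keep branch \eqref{branch} and use boundary uniqueness: the difference of the two sides is analytic on $\mathbb C\setminus[-k,k]$, continuous up to the cut from above with boundary value zero on $(-k,k)$, hence vanishes in the upper half plane by Schwarz reflection, and then on the whole connected cut plane by the identity theorem. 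With either patch your proof goes through; without one of them it does not, for the stated branch.
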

This extension enlarges the range of the classic Funk-Hecke formula from $k_z\in (-k, k)$ to the whole complex plane by choosing the branch
\begin{equation}\label{branch}
\sqrt{k^2-k_z^2}=-\ri \sqrt{r_1r_2}e^{\ri\frac{\theta_1+\theta_2}{2}},
\end{equation}
for the square root function $\sqrt{k^2-k_z^2}$.
Here $(r_i,\theta_i), i=1, 2$ are the modules and principal values of the arguments of complex numbers $k_z+k$ and $k_z-k$, i.e.,
$$k_z+k=r_1e^{\ri\theta_1}, \quad -\pi<\theta_1\leq\pi,\quad  k_z-k=r_2e^{\ri\theta_2},\quad -\pi<\theta_2\leq\pi.$$
It is worthy to point out that the normalized associated Legendre function $\widehat P_n^m(x)$ has also been extended to the whole complex plain by using the same branch.

{\color{black} Although we have $k_{\rho}\bs k_0=\lim\limits_{k\rightarrow 0^+}(\sqrt{k^2-k_z^2}\cos\alpha, \sqrt{k^2-k_z^2}\sin\alpha, k_z)$, with $k_z=\ri k_{\rho}$, taking limit directly in the expansion \eqref{extfunkhecke} will induce singularity in the associated Legendre function. In the following, we will show how to cancel the singularity to obtain a limit version of \eqref{extfunkhecke}, which gives an expansion for $e^{\ri k_{\rho}\bs k_0\cdot\bs r}$. }For this purpose, we first need to recall the corresponding extended Legendre addition theorem (cf. \cite{wang2019fast}).
\begin{lemma}\label{lemma2}
	Let $\bs w=(\sqrt{1-w^2}\cos\alpha, \sqrt{1-w^2}\sin\alpha, w)$ be a vector with complex entries, $\theta, \varphi$ be the azimuthal angle and polar angles of a unit vector $\hat{\bs r}$. Define
	\begin{equation}
	\beta(w)=w\cos\theta+\sqrt{1-w^2}\sin\theta\cos(\alpha-\varphi),
	\end{equation}
	then
	\begin{equation}\label{legendreadd}
	P_n(\beta(w))=\frac{4\pi}{2n+1}\sum\limits_{m=-n}^n\widehat P_n^m(\cos\theta)\widehat P_n^m(w)e^{\ri m(\alpha-\varphi)},
	\end{equation}
	for all $w\in\mathbb C$.
\end{lemma}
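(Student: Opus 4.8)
The plan is to obtain \eqref{legendreadd} by analytic continuation in the complex variable $w$ from the classical Legendre addition theorem (Theorem \ref{addthmleg}). Concretely, I would (i) check the identity for real $w\in(-1,1)$, where $\bs w$ is a genuine unit vector and the statement collapses to Theorem \ref{addthmleg}; (ii) verify that, once the branch \eqref{branch} of $\sqrt{1-w^2}$ is fixed, both sides are analytic functions of $w$; and (iii) invoke the identity theorem to propagate the equality from the interval to all of $\mathbb C$.

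\textbf{Base case.} For $w=\cos\psi$ with $\psi\in(0,\pi)$ one has $\sqrt{1-w^2}=\sin\psi>0$, so $\bs w=(\sin\psi\cos\alpha,\sin\psi\sin\alpha,\cos\psi)$ is a true unit vector with polar angle $\psi$ and azimuthal angle $\alpha$. Writing $\gamma$ for the angle between $\bs w$ and $\hat{\bs r}=(\theta,\varphi)$, the law of cosines gives exactly $\cos\gamma=w\cos\theta+\sqrt{1-w^2}\sin\theta\cos(\alpha-\varphi)=\beta(w)$. Applying Theorem \ref{addthmleg} to the directions $(\theta,\varphi)$ and $(\psi,\alpha)$ and using $Y_n^m(\theta,\varphi)=\widehat P_n^m(\cos\theta)e^{\ri m\varphi}$ together with $\overline{Y_n^m(\psi,\alpha)}=\widehat P_n^m(w)e^{-\ri m\alpha}$ turns the right-hand side into $\tfrac{4\pi}{2n+1}\sum_m\widehat P_n^m(\cos\theta)\widehat P_n^m(w)e^{\ri m(\varphi-\alpha)}$. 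Relabelling $m\to-m$ and using $\widehat P_n^{-m}=(-1)^m\widehat P_n^m$ (so the two sign factors cancel) replaces $e^{\ri m(\varphi-\alpha)}$ by $e^{\ri m(\alpha-\varphi)}$, which is precisely \eqref{legendreadd}. Hence the identity holds on $(-1,1)$.

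\textbf{Continuation.} I would next observe that, with the branch of $s:=\sqrt{1-w^2}$ held fixed, each side is of the form $A(w)+s\,B(w)$ with $A,B$ polynomials. On the left, $P_n$ has degree $n$ and $\beta(w)$ is linear in $s$, so $P_n(\beta(w))$ is a polynomial in $w,s$ that reduces via $s^2=1-w^2$ to this form. On the right, each $\widehat P_n^m(w)$ equals $(1-w^2)^{|m|/2}$ times a polynomial, hence a polynomial for even $|m|$ and $s$ times a polynomial for odd $|m|$; summing over $m$ again gives $A+sB$. Both expressions are therefore analytic on the plane cut so that $s$ is single-valued, and they agree on $(-1,1)$, a set with a limit point. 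The identity theorem forces them to coincide on this domain; equivalently, since $1$ and $s$ are linearly independent over $\mathbb C(w)$, agreement on $(-1,1)$ already equates the polynomial pairs $A,B$ on the two sides, so the identity extends to every $w\in\mathbb C$ under the branch \eqref{branch}.

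\textbf{Main obstacle.} The delicate point is the branch bookkeeping: the \emph{same} branch \eqref{branch} of $\sqrt{1-w^2}$ must be used inside $\beta(w)$ and inside every $\widehat P_n^m(w)$, so that the sign ambiguities introduced by $s$ cancel consistently. Concretely I would verify that the odd-$|m|$ terms on the right assemble into a single $s\cdot(\text{polynomial})$ with real coefficients, which follows from pairing the $\pm m$ contributions (giving factors $2\widehat P_n^m(\cos\theta)\cos(m(\alpha-\varphi))$) and again from $\widehat P_n^{-m}=(-1)^m\widehat P_n^m$, while the even-$|m|$ terms furnish the polynomial part $A$. This reduction to the $\{1,s\}$-module structure is the crux that makes the continuation rigorous rather than merely formal; it is essentially the ingredient imported from \cite{wang2019fast}.
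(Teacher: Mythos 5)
Your proposal is correct, but a direct comparison is not possible in the usual sense: this paper does not prove Lemma \ref{lemma2} at all — it is recalled from \cite{wang2019fast} — so your write-up supplies the argument the paper omits, and it is the natural continuation argument one would expect that reference to use. Your base case is handled correctly (the relabelling $m\to -m$ combined with $\widehat P_n^{-m}=(-1)^m\widehat P_n^m$ does repair the sign in the exponent), and your decisive observation is the right one: with $s=\sqrt{1-w^2}$, both sides lie in $\mathbb{C}[w]+s\,\mathbb{C}[w]$, and agreement on $(-1,1)$ forces the two polynomial pairs to coincide. (To make that last step airtight: from $A_1(x)+\sqrt{1-x^2}\,B_1(x)=A_2(x)+\sqrt{1-x^2}\,B_2(x)$ on $(-1,1)$ one gets the polynomial identity $(A_1-A_2)^2=(1-w^2)(B_2-B_1)^2$; at $w=1$ the left side has even multiplicity while the right side has odd multiplicity unless both vanish identically, so $A_1\equiv A_2$, $B_1\equiv B_2$.) Once the pairs coincide as polynomials, the identity holds for every $w\in\mathbb{C}$ and for either determination of the square root, provided the same value is used in $\beta(w)$ and in every $\widehat P_n^m(w)$ — exactly the branch-consistency point you flag as the crux.

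One caveat on your first ("identity theorem") formulation: with the specific branch \eqref{branch} adopted in this paper, the branch cut of $\sqrt{1-w^2}$ is precisely the segment $[-1,1]$ — the function tends to $+\sqrt{1-x^2}$ from above the segment and to $-\sqrt{1-x^2}$ from below — so $(-1,1)$ is not contained in the open set where $s(w)$ is analytic, and the identity theorem cannot be invoked verbatim on that cut plane. You could repair this by working first with the principal branch (cut along $(-\infty,-1]\cup[1,\infty)$), but it is simpler to discard the identity-theorem phrasing altogether: your second formulation via the $\{1,s\}$-module structure is self-contained, does not care where the cut lies, and is the argument that should be kept.
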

From this extended Legendre addition theorem, the following expansion can be obtained by choosing a specific $\bs\omega$ and then taking limit carefully.
\begin{lemma}\label{lemma3}
	Let $\bs k_0=(\cos\alpha, \sin\alpha, \ri)$ be a vector with complex entry, $\theta, \varphi$ be the azimuthal angle and polar angles of a unit vector $\hat{\bs r}$. Then
	\begin{equation}\label{polynomialadd}
	\frac{(\ri\bs k_0\cdot\hat{\bs r})^n}{n!}=\sum\limits_{m=-n}^nC_n^m\widehat P_n^m(\cos\theta)e^{\ri m(\alpha-\varphi)},
	\end{equation}
	where
	\begin{equation}\label{constantcnm}
	C_n^m=\ri^{2n-m}\sqrt{\frac{4\pi}{(2n+1)(n+m)!(n-m)!}}.
	\end{equation}
\end{lemma}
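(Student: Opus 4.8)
The plan is to obtain \eqref{polynomialadd} as a carefully taken limiting case of the extended Legendre addition theorem in Lemma \ref{lemma2}. The starting observation is that $\bs k_0=(\cos\alpha,\sin\alpha,\ri)$ is a null vector ($\bs k_0\cdot\bs k_0=0$) that can be reached, up to scaling, from the family $\bs w=(\sqrt{1-w^2}\cos\alpha,\sqrt{1-w^2}\sin\alpha,w)$ by letting $w\to+\infty$: with the branch \eqref{branch} (here $k=1$, $k_z=w$) one has $\sqrt{1-w^2}\sim-\ri w$, hence $\bs w/w\to(-\ri\cos\alpha,-\ri\sin\alpha,1)=-\ri\bs k_0$. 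Consequently, since $\beta(w)=\bs w\cdot\hat{\bs r}$, we get $\beta(w)/w\to-\ri\,\bs k_0\cdot\hat{\bs r}$, where $\bs k_0\cdot\hat{\bs r}=\sin\theta\cos(\alpha-\varphi)+\ri\cos\theta$.

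First I would divide both sides of \eqref{legendreadd} by $w^n$ and send $w\to+\infty$. On the left, using $P_n(x)=\frac{(2n)!}{2^n(n!)^2}x^n$ plus lower-order terms and $\beta(w)=O(w)$, the lower-order contributions are $O(w^{n-2})$ and drop out, so $w^{-n}P_n(\beta(w))\to\frac{(2n)!}{2^n(n!)^2}(-\ri\,\bs k_0\cdot\hat{\bs r})^n$. The crux is the right-hand side, where I need the large-$w$ asymptotics of the extended normalized Legendre function $\widehat P_n^m(w)$. For $0\le m\le n$ I would combine $(1-w^2)^{m/2}\sim(-\ri w)^m$ (same branch) with the leading coefficient of $\frac{d^m}{dw^m}P_n$, namely $\frac{(2n)!}{2^n n!(n-m)!}w^{n-m}$, to get $P_n^m(w)\sim\ri^m\frac{(2n)!}{2^n n!(n-m)!}w^n$ and hence, after applying the normalization in \eqref{sphericalharmonics}, an explicit value for $\lim_{w\to\infty}w^{-n}\widehat P_n^m(w)$.

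Then I would equate the two limits. Since the degree-$n$ sum on the right of \eqref{legendreadd} ranges over the fixed finite set $-n\le m\le n$, each summand converges and the limiting identity holds as functions of $(\theta,\alpha,\varphi)$:
\[
\frac{(2n)!}{2^n(n!)^2}(-\ri\,\bs k_0\cdot\hat{\bs r})^n=\frac{4\pi}{2n+1}\sum_{m=-n}^n\Big(\lim_{w\to\infty}\frac{\widehat P_n^m(w)}{w^n}\Big)\widehat P_n^m(\cos\theta)e^{\ri m(\alpha-\varphi)}.
\]
Multiplying through by the scalar $\frac{\ri^n}{n!}\cdot\frac{2^n(n!)^2}{(2n)!(-\ri)^n}$ turns the left-hand side into $(\ri\bs k_0\cdot\hat{\bs r})^n/n!$ and yields exactly \eqref{polynomialadd}, with $C_n^m$ equal to this scalar times $\frac{4\pi}{2n+1}\lim_{w\to\infty}w^{-n}\widehat P_n^m(w)$. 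The remaining bookkeeping of factorials and powers of $\ri$ (using $\ri^n/(-\ri)^n=(-1)^n$ and $(-1)^m\ri^m=\ri^{-m}$) collapses this to $C_n^m=\ri^{2n-m}\sqrt{4\pi/[(2n+1)(n+m)!(n-m)!]}$, as claimed. For $m<0$ the same value follows from the reflection $\widehat P_n^{-m}=(-1)^m\widehat P_n^m$, which is consistent with $C_n^{-m}=(-1)^mC_n^m$.

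The main obstacle I anticipate is the careful treatment of the $w\to+\infty$ limit through the branch \eqref{branch}: one must confirm that $\sqrt{1-w^2}/w\to-\ri$ (rather than $+\ri$) along the ray used, that this is precisely the branch under which $\widehat P_n^m$ was extended to $\mathbb{C}$, and that the resulting phase $\ri^m=(-1)^m(-\ri)^m$ has the correct argument so that all the powers of $\ri$ combine into $\ri^{2n-m}$. Getting this sign/branch right is exactly what makes the constant emerge as stated rather than as its complex conjugate; the rest is the routine asymptotic extraction of leading coefficients.
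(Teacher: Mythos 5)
Your proof is correct and is essentially the paper's own argument: both rest on the extended Legendre addition theorem (Lemma \ref{lemma2}) followed by a scaling limit in which the Legendre argument tends to infinity, with the constants extracted from the leading coefficients of $P_n$ and of the extended $\widehat P_n^m$. The only difference is the path of the limit --- you send $w\to+\infty$ along the real axis (so the phase enters through the branch $(1-w^2)^{m/2}\sim(-\ri w)^m$ and the factor $(-\ri)^n$ on the left-hand side), while the paper takes $w=\ri/k$ with $k\to0^+$, i.e.\ along the imaginary axis (where $\sqrt{1-w^2}>0$ and the phase $\ri^{n-m}$ comes instead from the $w^{n-m}$ factor in Rodrigues' formula) --- and both ledgers produce the same constant $C_n^m=\ri^{2n-m}\sqrt{4\pi/[(2n+1)(n+m)!(n-m)!]}$.
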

\begin{proof}
	For any $k\in \mathbb{R}^+$, define $\bs{k} = ( \sqrt{k^2+1} \cos\alpha, \sqrt{k^2+1} \sin\alpha, \ri)$.	By lemma \ref{lemma2}, we have
	\begin{align}\label{eq-ext-Legendre}
	\begin{split}
	k^nP_n\Big(\frac{\bs k\cdot\hat{\bs r}}{k}\Big)
	&= \frac{4\pi}{2n+1}\sum\limits_{m=-n}^n\widehat P_n^m(\cos\theta)k^n\widehat P_n^m\Big(\frac{\ri}{k}\Big)e^{\ri m(\alpha-\varphi)}.
	\end{split}
	\end{align}
	Consider the limit of the above identity as $k \to 0^+$. Note that
	\begin{equation}
	\lim_{k \to 0^+} \bs{k}\cdot\hat{\bs{r}} =  \bs{k}_0 \cdot \hat{\bs{r}} ,
	\end{equation}
	together with the knowledge on the coefficient of the leading term in the Legendre polynomial $P_n(x)$ lead to
	\begin{equation}\label{legendrelimit}
	\lim_{k \to 0^+} k^n P_n\Big(\frac{\bs k\cdot\hat{\bs r}}{k}\Big)= \frac{(2n)!}{2^{n}(n!)^2}( \bs{k}_0 \cdot \hat{\bs{r}})^n.
	\end{equation}

	Recall the Rodrigues' formula of the associated Legendre function
	\begin{equation}
	\widehat P_n^m(x)=\frac{c_{nm}}{2^nn!}(1-x^2)^{\frac{m}{2}}\frac{d^{n+m}}{dx^{n+m}}(x^2-1)^n,\quad c_{nm}=\sqrt{\frac{2n+1}{4\pi}\frac{(n-m)!}{(n+m)!}}
	\end{equation}
	for $0 \le m \le n$, we have
	\begin{equation}
	k^n \widehat P_n^m\Bigl(\frac{\ri }{k}\Bigr) = \frac{c_{nm}}{2^n n!}\frac{(2n)!}{(n-m)!}(k^2+1)^{\frac{m}{2}}\cdot k^{n-m}\widetilde{Q}_{n-m}\Bigl(\frac{\ri }{k}\Bigr)
	\end{equation}
	where $\widetilde{Q}_n(z)$ is a \emph{monic} polynomial of degree $n$.
	Hence, we get similarly
	\begin{equation}\label{alegendrelimit}
	\lim_{k \to 0^+} k^n \widehat P_n^m\Bigl(\frac{\ri}{k}\Bigr) = \frac{c_{nm}}{2^n n!}\frac{(2n)!\ri ^{n-m}}{(n-m)!}.
	\end{equation}
	The identity $\widehat{P}_n^{-m}(x)=(-1)^m\widehat{P}_n^m(x)$ will give the limit for $-n\le m<0$ cases. Now, let $k\rightarrow 0^+$ in \eqref{eq-ext-Legendre} and use results \eqref{legendrelimit} and \eqref{alegendrelimit}, we complete the proof.
\end{proof}
\begin{proposition}\label{prop:Funk-Hecke-limit}
	Given $\bs r=(x, y, z)\in \mathbb R^3$, $\alpha\in[0, 2\pi)$ and denoted by $(r,\theta,\varphi)$ the spherical coordinates of $\bs r$, $\bs k_0=(\cos\alpha, \sin\alpha, \ri)$ is a vector of complex entries. Then
	\begin{equation}\label{extfunkheckelim}
	e^{\ri k_{\rho}\bs k_0\cdot{\bs r}}=\sum\limits_{n=0}^{\infty}\sum\limits_{m=-n}^nC_n^m r^nY_n^m(\theta,\varphi)k_{\rho}^ne^{-\ri m\alpha}=\sum\limits_{n=0}^{\infty}\sum\limits_{m=-n}^n C_n^mr^n\overline{Y_n^m(\theta,\varphi)}k_{\rho}^ne^{\ri m\alpha},
	\end{equation}
	holds for all $r>0$, $k_{\rho}> 0$, where $C_n^m$ is the constant defined in \eqref{constantcnm}.
\end{proposition}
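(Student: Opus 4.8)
The plan is to reduce the claim to Lemma \ref{lemma3} by expanding the exponential into its Taylor series and applying the single-power expansion term by term. First I would write $\bs r = r\hat{\bs r}$, so that $\bs k_0\cdot\bs r = r(\bs k_0\cdot\hat{\bs r})$ and
$$
e^{\ri k_{\rho}\bs k_0\cdot\bs r}=e^{\ri k_{\rho}r(\bs k_0\cdot\hat{\bs r})}=\sum_{n=0}^{\infty}\frac{\bigl(\ri k_{\rho}r(\bs k_0\cdot\hat{\bs r})\bigr)^n}{n!}=\sum_{n=0}^{\infty}(k_{\rho}r)^n\frac{(\ri\,\bs k_0\cdot\hat{\bs r})^n}{n!}.
$$
Because $\hat{\bs r}$ is a unit vector and $\bs k_0=(\cos\alpha,\sin\alpha,\ri)$ has bounded entries, the argument $\ri k_{\rho}r(\bs k_0\cdot\hat{\bs r})$ is a fixed complex number, so the exponential series converges absolutely for every $r>0$ and $k_{\rho}>0$; this justifies the term-by-term manipulations that follow.

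Next I would substitute the expansion of the $n$-th power supplied by Lemma \ref{lemma3}, namely $\frac{(\ri\,\bs k_0\cdot\hat{\bs r})^n}{n!}=\sum_{m=-n}^n C_n^m\widehat P_n^m(\cos\theta)e^{\ri m(\alpha-\varphi)}$, into the series, obtaining
$$
e^{\ri k_{\rho}\bs k_0\cdot\bs r}=\sum_{n=0}^{\infty}\sum_{m=-n}^n C_n^m\, r^n k_{\rho}^n\,\widehat P_n^m(\cos\theta)\,e^{\ri m(\alpha-\varphi)}.
$$
To recognize the spherical harmonics I would use $\overline{Y_n^m(\theta,\varphi)}=\widehat P_n^m(\cos\theta)e^{-\ri m\varphi}$, valid since $\widehat P_n^m$ is real-valued on $[-1,1]$, so that $\widehat P_n^m(\cos\theta)e^{\ri m(\alpha-\varphi)}=\overline{Y_n^m(\theta,\varphi)}\,e^{\ri m\alpha}$. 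This directly yields the second equality in \eqref{extfunkheckelim}.

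For the first equality I would exploit the parity of the summand under $m\mapsto -m$. From the definition \eqref{constantcnm} one checks $C_n^{-m}=(-1)^m C_n^m$, while the normalization convention gives $\widehat P_n^{-m}(\cos\theta)=(-1)^m\widehat P_n^m(\cos\theta)$; hence the product $C_n^m\widehat P_n^m(\cos\theta)$ is invariant under $m\mapsto -m$. Replacing $m$ by $-m$ in the inner sum therefore flips the sign of the exponent in $e^{\ri m(\alpha-\varphi)}$ without altering the coefficient, and rewriting $\widehat P_n^m(\cos\theta)e^{-\ri m(\alpha-\varphi)}=Y_n^m(\theta,\varphi)e^{-\ri m\alpha}$ produces the first form.

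I do not anticipate a serious obstacle here: essentially all of the analytic content resides in Lemma \ref{lemma3}, which has already been established through the delicate $k\to 0^+$ limit that cancels the singularity of the associated Legendre function. The only point demanding care is the legitimacy of inserting the single-power identity inside the infinite sum, and this is secured by the absolute convergence of the exponential series noted in the first step. Everything remaining is the bookkeeping of the Legendre-to-spherical-harmonic conversion and the $m\mapsto -m$ parity argument.
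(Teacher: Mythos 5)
Your proposal is correct and follows exactly the paper's own route: Taylor-expand $e^{\ri k_{\rho}\bs k_0\cdot\bs r}$ in powers of $\ri k_{\rho}r(\bs k_0\cdot\hat{\bs r})$ and apply Lemma \ref{lemma3} term by term. The paper's proof states only these two steps; your additional bookkeeping (absolute convergence, the identification $\widehat P_n^m(\cos\theta)e^{\ri m(\alpha-\varphi)}=\overline{Y_n^m(\theta,\varphi)}e^{\ri m\alpha}$, and the $m\mapsto-m$ parity argument relating the two forms of \eqref{extfunkheckelim}) correctly fills in details the paper leaves implicit.
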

\begin{proof}
	By Taylor expansion, we have
	\begin{equation}
	e^{\ri k_{\rho}\bs k_0\cdot{\bs r}}=\sum\limits_{n=0}^{\infty}\frac{(\ri \bs k_0\cdot\hat{\bs r})^n}{n!}k_{\rho}^nr^n.
	\end{equation}
	Then, \eqref{extfunkheckelim} follows by applying lemma \ref{lemma3} to each term in the above expansion.
\end{proof}

\begin{rem}
	By setting $k_z=\ri k_{\rho}$ and using the limit values given by \eqref{legendrelimit} and \eqref{alegendrelimit}, one can also verify that the expansions for $e^{\ri k_{\rho}\bs k_0\cdot{\bs r}}$ in proposition \ref{prop:Funk-Hecke-limit} are exactly the limiting cases of the expansions in proposition \ref{prop:Funk-Hecke}.
\end{rem}

Applying spherical harmonic expansion \eqref{extfunkheckelim} to exponential functions $e^{-\ri k_{\rho}\bs k_0\cdot(\bs r-\bs r_c^s)}$ and $e^{\ri \bs k\cdot(\bs r-\bs r_c^t)}$ in \eqref{positivecase} gives
\begin{equation}\label{meinspectraldomain}
\frac{1}{|\bs r-\bs r'|}=\sum\limits_{n=0}^{\infty}\sum\limits_{m=-n}^{n}M_{nm}\frac{(-1)^{n}c_n^2C_n^m}{2\pi}\int_0^{\infty}\int_0^{2\pi}k_{\rho}^{n}e^{\ri k_{\rho}\bs k_0\cdot(\bs r-\bs r_c^s)}e^{\ri m\alpha}d\alpha  dk_{\rho},
\end{equation}
and
\begin{equation}\label{leinspectraldomain}
\frac{1}{|\bs r-\bs r'|}=\sum\limits_{n=0}^{\infty}\sum\limits_{m=-n}^{n}\hat L_{nm}r_t^nY_n^m(\theta_t,\varphi_t),
\end{equation}
for $z\geq z'$, where $M_{nm}$ is defined in \eqref{freespaceexpcoef} and
\begin{equation}
\hat L_{nm}=\frac{C_n^m}{2\pi}\int_0^{\infty}\int_0^{2\pi}k_{\rho}^{n}e^{\ri k_{\rho}\bs k_0\cdot(\bs r_c^t-\bs r')}e^{-\ri m\alpha}d\alpha  dk_{\rho}.
\end{equation}
Recall the identity
\begin{equation}\label{wavefunspectralform}
r^{-n-1}Y_n^{-m}(\theta,\varphi)=\frac{(-1)^nc_n^2C_n^m}{2\pi}\int_0^{\infty}\int_0^{2\pi}k_{\rho}^{n}e^{\ri k_{\rho}\bs k_0\cdot\bs r}e^{-\ri m\alpha}d\alpha  dk_{\rho},
\end{equation}
for $z\geq 0$, we see that \eqref{meinspectraldomain} and \eqref{leinspectraldomain} are exactly the ME \eqref{mefreespace} and LE \eqref{lefreespace} in the case of $z\geq z'$.

To derive the translation from the ME \eqref{mefreespace} to the LE \eqref{lefreespace}, we perform further spliting
\begin{equation}
e^{\ri k_{\rho}\bs k_0\cdot(\bs r-\bs r_c^s)}=e^{\ri k_{\rho}\bs k_0\cdot(\bs r-\bs r_c^t)}e^{\ri k_{\rho}\bs k_0\cdot(\bs r_c^t-\bs r_c^s)},
\end{equation}
in \eqref{meinspectraldomain} and apply expansion \eqref{extfunkheckelim} again to obtain the translation
\begin{equation*}
\begin{split}
L_{nm}=&C_n^m\sum\limits_{n'=0}^{\infty}\sum\limits_{m'=-n'}^{n'}M_{n'm'}\frac{(-1)^{n'}c_{n'}^2C_{n'}^{m'}}{2\pi}\int_0^{\infty}\int_0^{2\pi}k_{\rho}^{n+n'}e^{\ri k_{\rho}\bs k_0(\bs r_c^t-\bs r_c^s)}e^{\ri(m'-m)\alpha}d\alpha  dk_{\rho}.
\end{split}
\end{equation*}
By using the identity \eqref{wavefunspectralform}, we can also verify that the above integral form is equal to the entries of the M2L translation matrix defined in \eqref{metole}.

\section{FMM for 3-D Laplace equation in layered media}

In this section, the potential of charges in layered media is formulated using layered Green's function and then decomposed into a free space and four types of reaction components. Furthermore, the reaction components are re-expressed by using equivalent polarization charges. The new expressions are used to derive the MEs and LEs for the reaction components of the layered Green's function in the same spirit as in the last section. Based on these new expansions and translations, FMM for 3-D Laplace kernel in layered media can be developed.

\subsection{\color{black}Potential due to sources embedded in multi-layer media}
Consider a layered medium consisting of $L$-interfaces located at $z=d_{\ell
},\ell=0,1,\cdots,L-1$, see Fig. \ref{layerstructure}. The piece wise constant material parameter is described by $\{\varepsilon_{\ell}\}_{\ell=0}^L$.
Suppose we have a point
source at $\boldsymbol{r}^{\prime}=(x^{\prime},y^{\prime},z^{\prime})$ in the
$\ell^{\prime}$th layer ($d_{\ell^{\prime}}<z^{\prime}<d_{\ell^{\prime}-1}$), then, the layered media Green's function $u_{\ell\ell'}(\bs r, \bs r')$ for the Laplace equation satisfies
\begin{equation}\label{Laplaceeqlayer}
\boldsymbol{\Delta}u_{\ell\ell'}(\boldsymbol{r},\boldsymbol{r}^{\prime
})=-\delta(\boldsymbol{r},\boldsymbol{r}^{\prime}),
\end{equation}
at field point $\boldsymbol{r}=(x,y,z)$ in the $\ell$th layer ($d_{\ell
}<z<d_{\ell}-1$) where $\delta(\boldsymbol{r},\boldsymbol{r}^{\prime})$ is the
Dirac delta function.
By using Fourier transforms along $x-$ and $y-$directions, the problem can be solved analytically for each layer in $z$ by imposing
transmission conditions at the interface between $\ell$th and $(\ell-1)$th
layer ($z=d_{\ell-1})$, \textit{i.e.},
\begin{equation}\label{transmissioncond}
u_{\ell-1,\ell'}(x,y,z)=u_{\ell\ell'}(x,y,z),\quad \varepsilon_{\ell-1}\frac{\partial  u_{\ell-1,\ell'}(x,y,z)}{\partial z}=\varepsilon_{\ell}\frac{\partial \widehat u_{\ell\ell'}(k_{x},k_{y},z)}{\partial z},
\end{equation}
as well as the decaying conditions in the top and bottom-most layers as
$z\rightarrow\pm\infty$.
\begin{figure}[ht!]\label{layerstructure}
	\centering
	\includegraphics[scale=0.7]{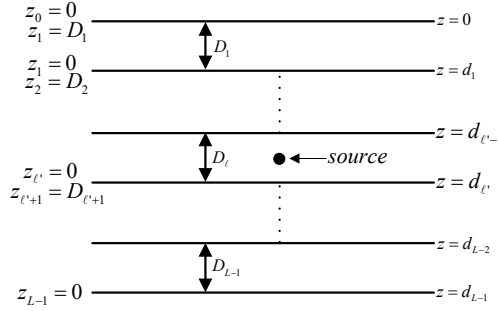}
	\caption{Sketch of the layer structure for general multi-layer media.}
\end{figure}

{\color{black}
Here, we give the expression for the analytic solution with detailed derivations included in the Appendix A. In general, the layered media Green's function in the physical domain takes the form
\begin{equation}\label{layeredGreensfun}
u_{\ell\ell^{\prime}}(\boldsymbol{r},\boldsymbol{r}^{\prime})=\begin{cases}
\displaystyle u_{\ell\ell'}^{\text{r}}(\boldsymbol{r},\boldsymbol{r}^{\prime})+\frac{1}{4\pi
	|\bs r-\bs r^{\prime}|},&\ell=\ell',\\
\displaystyle u_{\ell\ell'}^{\text{r}}(\boldsymbol{r},\boldsymbol{r}^{\prime}), & \text{otherwise},
\end{cases}
\end{equation}
where
\begin{equation}\label{reactioncomponent}
u^{\text{r}}_{\ell\ell'}(\bs r, \bs r')=\begin{cases}
\displaystyle u_{0\ell^{\prime}}^{11}%
(\boldsymbol{r},\boldsymbol{r}^{\prime})+u_{0\ell^{\prime}}^{12}%
(\boldsymbol{r},\boldsymbol{r}^{\prime}),\\
\displaystyle u_{\ell\ell^{\prime}}^{11}%
(\boldsymbol{r},\boldsymbol{r}^{\prime})+u_{\ell\ell^{\prime}}^{12}%
(\boldsymbol{r},\boldsymbol{r}^{\prime})+u_{\ell\ell^{\prime}}^{21
}(\boldsymbol{r},\boldsymbol{r}^{\prime})+u_{\ell\ell^{\prime}}^{22
}(\boldsymbol{r},\boldsymbol{r}^{\prime}), &
0<\ell<L,\\
\displaystyle u_{L\ell^{\prime}}^{21}%
(\boldsymbol{r},\boldsymbol{r}^{\prime})+u_{L\ell^{\prime}}^{22}%
(\boldsymbol{r},\boldsymbol{r}^{\prime}).
\end{cases}
\end{equation}
The reaction component $u_{\ell\ell'}^{\mathfrak{ab}}(\bs r, \bs r')$ is given in an integral form
\begin{equation}\label{generalcomponents}
u_{\ell\ell'}^{\mathfrak{ab}}(\bs r, \bs r')=\frac{1}{8\pi^2 }\int_0^{\infty}\int_0^{2\pi}e^{\ri\bs k_{\alpha}\cdot(\bs\rho-\bs\rho')}\mathcal{Z}_{\ell\ell'}^{\mathfrak{ab}}(z, z')\sigma_{\ell\ell'}^{\mathfrak{ab}}(k_{\rho})d\alpha dk_{\rho},\quad \mathfrak{a, b}=1, 2,
\end{equation}
where,
\begin{equation}
\bs k_{\alpha}=k_{\rho}(\cos\alpha,\sin\alpha),
\label{kalpha}
\end{equation}
and $\{\mathcal{Z}_{\ell\ell'}^{\mathfrak{ab}}(z, z')\}_{\mathfrak{a, b}=1}^2$ are exponential functions defined as
\begin{equation}\label{zexponential}
\begin{split}
&\mathcal{Z}_{\ell\ell'}^{11}(z, z'):=e^{- k_{\rho} (z-d_{\ell}+z'-d_{\ell'})},\quad \mathcal{Z}_{\ell\ell'}^{12}(z, z'):=e^{-k_{\rho} (z-d_{\ell}+d_{\ell'-1}-z')},\\
&\mathcal{Z}_{\ell\ell'}^{21}(z, z'):=e^{-k_{\rho} (d_{\ell-1}-z+z'-d_{\ell'})},\quad \mathcal{Z}_{\ell\ell'}^{22}(z, z'):=e^{-k_{\rho} (d_{\ell-1}-z+d_{\ell'-1}-z')},
\end{split}
\end{equation}
$\{\sigma_{\ell\ell'}^{\mathfrak{ab}}(k_{\rho})\}_{\mathfrak{a,b}=1}^2$ are reaction densities only dependent on the layer structure and the material parameter $k_{\ell}$ in each layer. The reaction densities can be calculated efficiently by using a recursive algorithm, see the Appendix A for more details. It is worthwhile to point out that the reaction components $u_{\ell\ell'}^{\mathfrak a2}$ or $u_{\ell\ell'}^{\mathfrak a1}$ will vanish if the source $\bs r'$ is in the top or bottom most layer.

Withe the expression of the Green's function in layered media, we are ready to consider the potential due to sources in layered media. Let $\mathscr{P}_{\ell}=\{(Q_{\ell j},\boldsymbol{r}_{\ell j}),$ $j=1,2,\cdots
,N_{\ell}\}$, $\ell=0, 1, \cdots, L$ be $L$ groups of source charges distributed in a multi-layer medium with $L+1$ layers (see Fig. \ref{layerstructure}). The group of charges in $\ell$-th layer is denoted by $\mathscr{P}_{\ell}$.  Apparently, the potential at $\bs r_{\ell i}$ due to all other charges is given by  the summation
\begin{equation}\label{potential1}
\hspace{-3pt}\Phi_{\ell}(\boldsymbol{r}_{\ell i})=\sum\limits_{\ell'=0}^{L}\sum\limits_{j=1}^{N_{\ell'}}Q_{\ell' j}u_{\ell\ell'}(\bs r_{\ell i},\bs r_{\ell' j})
=\sum\limits_{j=1,j\neq i}^{N_{\ell}}\frac{Q_{\ell j}}{4\pi|\bs r_{\ell i}-\bs r_{\ell j}|}+\sum\limits_{\ell'=0}^{L}\sum\limits_{j=1}^{N_{\ell'}}Q_{\ell' j}u_{\ell\ell'}^{\text{r}}(\bs r_{\ell i},\bs r_{\ell' j}),
\end{equation}
where $u_{\ell\ell'}^{\text{r}}(\bs r,\bs r')$ are the reaction field components defined in \eqref{reactioncomponent}-\eqref{zexponential}. As the reaction components of the Green's function in multi-layer media have different expressions \eqref{generalcomponents} for sources and targets in different layers, it is necessary to perform calculation individually for interactions between any two groups of charges among the $L+1$ groups $\{\mathscr{P}_{\ell}\}_{\ell=0}^{L}$. Applying expressions \eqref{reactioncomponent} and \eqref{generalcomponents} in \eqref{potential1}, we obtain
\begin{equation}\label{totalinteraction}
\begin{split}
\Phi_{\ell}(\boldsymbol{r}_{\ell i})=&\Phi_{\ell}^{\text{free}}(\boldsymbol{r}_{\ell 	i})+\Phi_{\ell}^{\text{r}}(\boldsymbol{r}_{\ell i})\\
=&\Phi_{\ell}^{\text{free}}(\boldsymbol{r}_{\ell i})+\sum\limits_{\ell^{\prime}=0}^{L-1}[\Phi_{\ell\ell^{\prime}}^{11
}(\boldsymbol{r}_{\ell i})+\Phi_{\ell\ell^{\prime}}^{21
}(\boldsymbol{r}_{\ell i})]+\sum\limits_{\ell^{\prime}=1}^{L}[\Phi_{\ell\ell^{\prime}}^{12
}(\boldsymbol{r}_{\ell i})+\Phi_{\ell\ell^{\prime}}^{22
}(\boldsymbol{r}_{\ell i})],
\end{split}
\end{equation}
where
\begin{equation}\label{freereactioncomponents}
\begin{split}
&  \Phi_{\ell}^{\text{free}}(\boldsymbol{r}_{\ell i}):=\sum\limits_{j=1,j\neq
	i}^{N_{\ell}}\frac{Q_{\ell j}}{4\pi|\boldsymbol{r}_{\ell
		i}-\boldsymbol{r}_{\ell j}|},\quad  \Phi_{\ell\ell^{\prime}}^{\mathfrak{ab}}(\boldsymbol{r}_{\ell i}):=\sum
\limits_{j=1}^{N_{\ell^{\prime}}}Q_{\ell^{\prime}j}u_{\ell\ell^{\prime}%
}^{\mathfrak{ab}}(\boldsymbol{r}_{\ell i},\boldsymbol{r}_{\ell^{\prime}j}%
).
\end{split}
\end{equation}
Obviously, the free space component $\Phi_{\ell}^{\text{free}}(\boldsymbol{r}_{\ell i})$ can be computed using the traditional FMM. Thus, we will only focus on the computation of the reaction components $\{\Phi_{\ell\ell^{\prime}}^{\mathfrak{ab}}(\boldsymbol{r}_{\ell i})\}_{\mathfrak{a, b}=1}^2$.}

\subsection{Equivalent polarization sources for reaction components}
The expressions of the components given in \eqref{freereactioncomponents} show that the free space components only involve interactions between charges in the same layer. Interactions between charges in different layers are all included in the reaction components. Two groups of charges involved in the computation of a reaction component could be physically very far away from each other as there could be many layers between the source and target layers associated to the reaction component, see Fig. \ref{polarizedsource} (left).

Our recent work on the Helmholtz equation \cite{zhang2018exponential,wang2019fast}, of which the Laplace equation can be considered as a special case where the wave number $k=0$, has shown that the exponential convergence of the ME and LE for the reaction components $u_{\ell\ell'}^{\mathfrak{ab}}(\bs r, \bs r')$ in fact depends on the distance between the target charge $\bs r$ and a polarization charge defined for the source charge $\bs r'$, which uses the distance between the source charge $\bs r'$ and the nearest material interface and always locates next to the nearest interface adjacent to the target charge. Fig. \ref{sourceimages} illustrates the location of the polarization charge $\bs r'_{\mathfrak{ab}}$ for each of the four types of reaction fields
$\tilde u_{\ell\ell'}^{\mathfrak{ab}},\mathfrak{a},\mathfrak{b}=1,2 $.
%
Specifically, the equivalent polarization sources associated to reaction components $u^{\mathfrak{ab}}_{\ell\ell'}(\bs r, \bs r')$, $\mathfrak{a}, \mathfrak{b} =1, 2$ are set to be  at coordinates (see Fig. \ref{sourceimages})
\begin{equation}\label{eqpolarizedsource}
\begin{split}
&\bs r'_{11}:=(x', y', d_{\ell}-(z'-d_{\ell'})),\quad\quad\bs r'_{12}:=(x', y', d_{\ell}-(d_{\ell'-1}-z')),\\
&\bs r'_{21}:=(x', y', d_{\ell-1}+(z'-d_{\ell'})),\quad \bs r'_{22}:=(x', y', d_{\ell-1}+(d_{\ell'-1}-z')),
\end{split}
\end{equation}
and the reaction potentials are
\begin{equation}\label{reactfieldpolarization}
\tilde u_{\ell\ell'}^{\mathfrak{ab}}(\bs r, \bs r'_{\mathfrak{ab}}):=\frac{1}{8\pi^2 }\int_0^{\infty}\int_0^{2\pi}e^{\ri\bs k_{\alpha}\cdot(\bs\rho-\bs\rho')}e^{-k_{\rho}|z-z^{\prime}_{\mathfrak{ab}}|}\sigma_{\ell\ell'}^{\mathfrak{ab}}(k_{\rho})d\alpha dk_{\rho},\quad \mathfrak{a,b}=1, 2,
\end{equation}
where $z'_{\mathfrak{ab}}$ denotes the $z$-coordinate of $\bs r'_{\mathfrak{ab}}$, i.e.,
\begin{equation*}
z_{11}^{\prime}=d_{\ell}-(z'-d_{\ell'}),\; z_{12}^{\prime}=d_{\ell}-(d_{\ell'-1}-z'),\; z_{21}^{\prime}=d_{\ell-1}+(z'-d_{\ell'}),\;z_{22}^{\prime}=d_{\ell-1}+(d_{\ell'-1}-z').
\end{equation*}
\begin{figure}[ht!]
	\centering
	\includegraphics[scale=0.65]{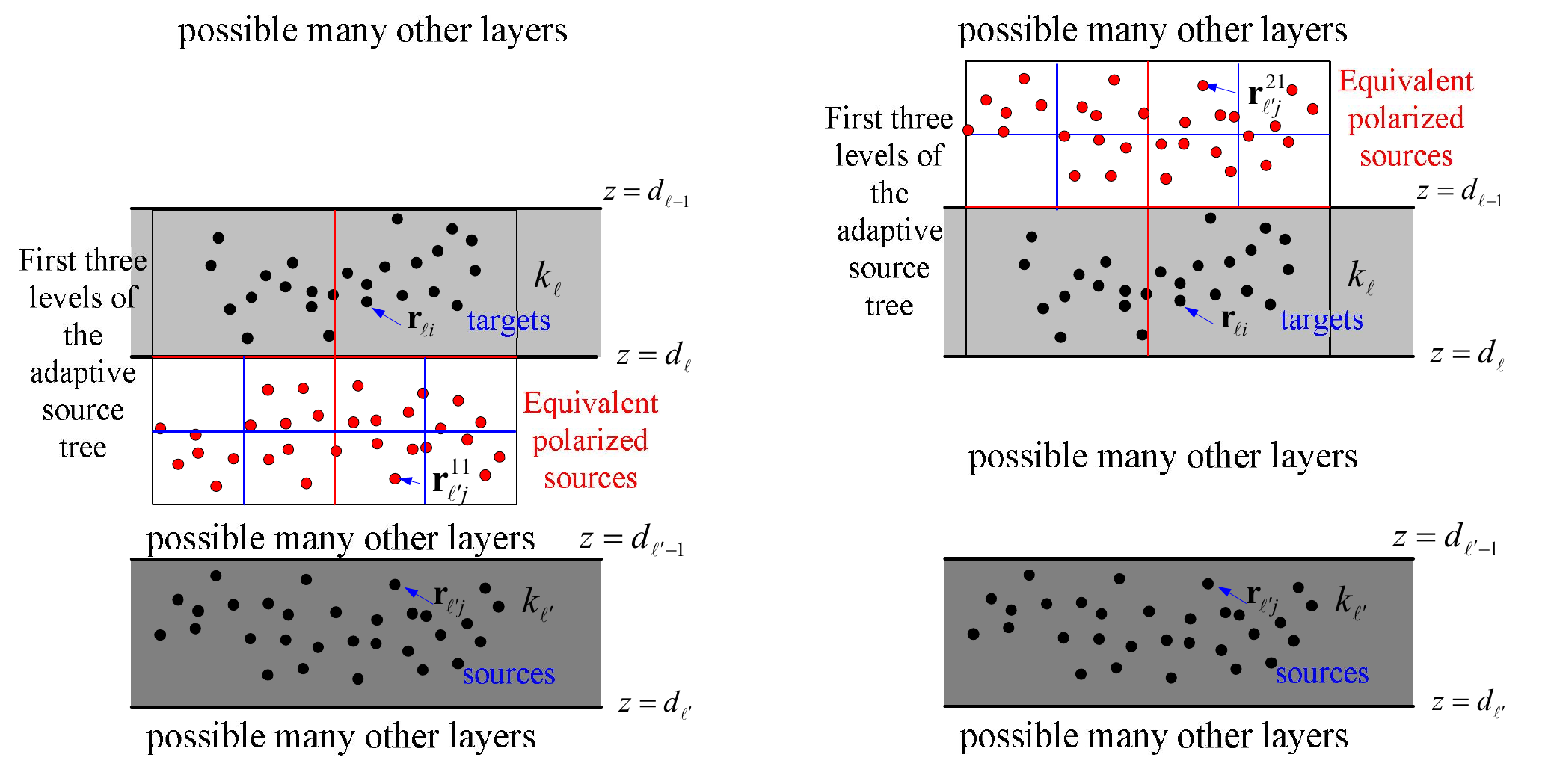}
	\caption{Equivalent polarized sources $\{\bs r_{\ell'j}^{11}\}$, $\{\bs r_{\ell'j}^{21}\}$ and boxes in source tree.}
	\label{polarizedsource}
\end{figure}
\begin{figure}[ht!]
	\center
	\subfigure[$u_{\ell\ell'}^{11}$]{\includegraphics[scale=0.45]{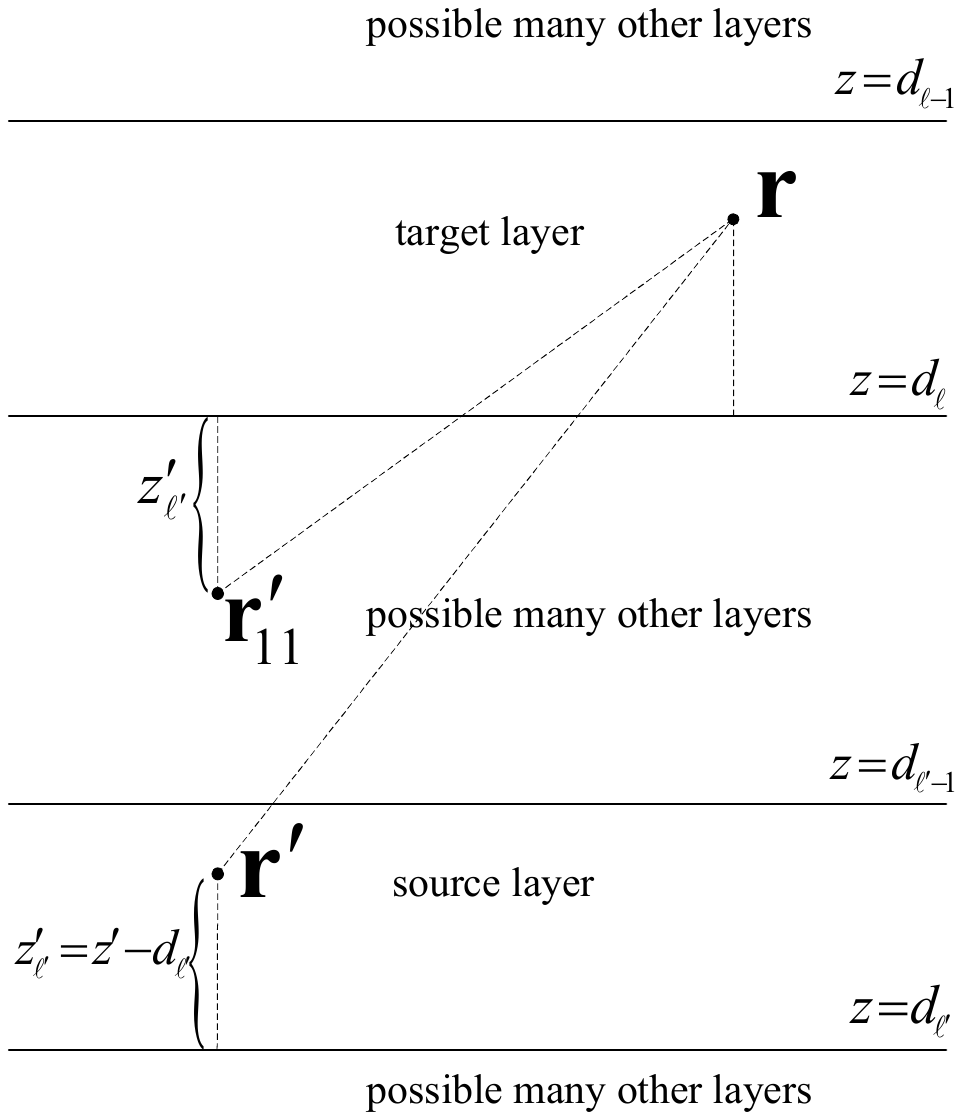}}
	\subfigure[$u_{\ell\ell'}^{12}$]{\includegraphics[scale=0.45]{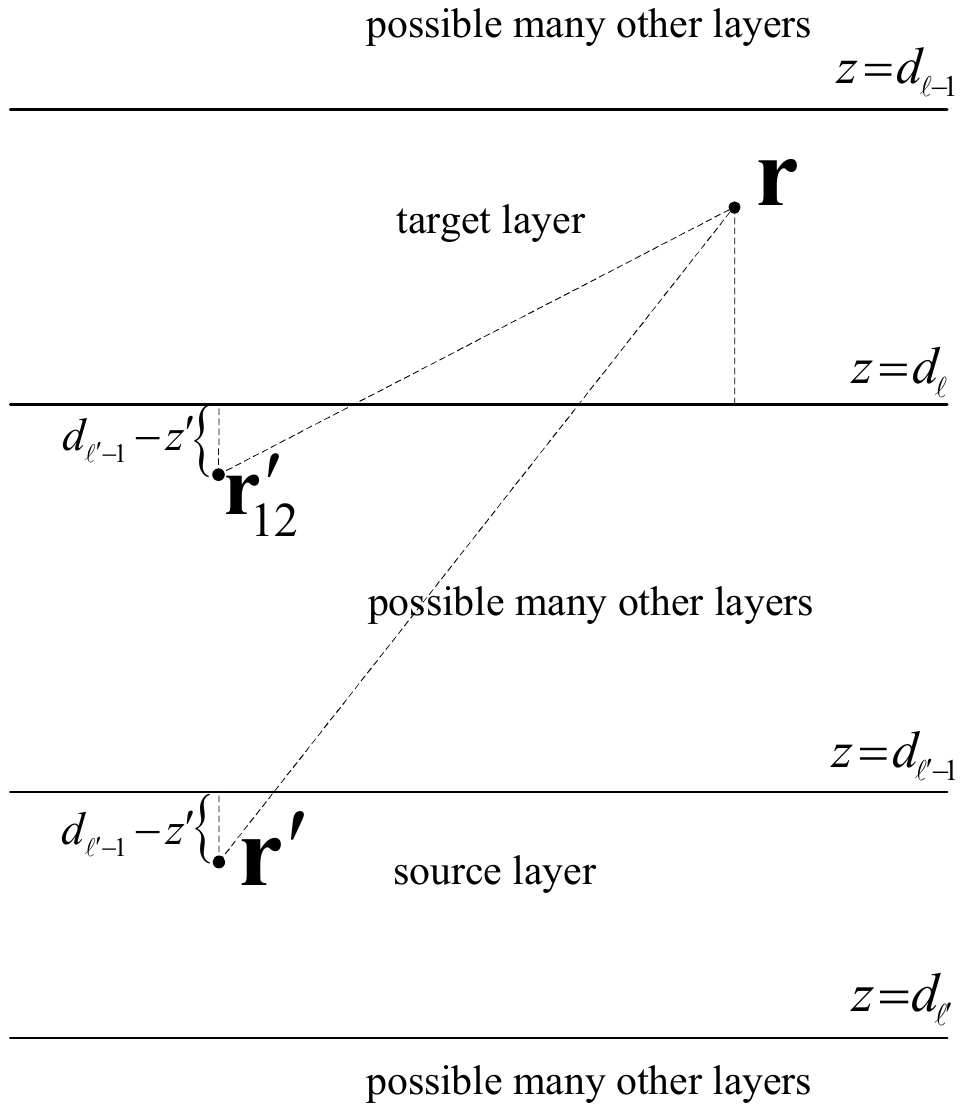}} \\
	\subfigure[$u_{\ell\ell'}^{21}$]{\includegraphics[scale=0.45]{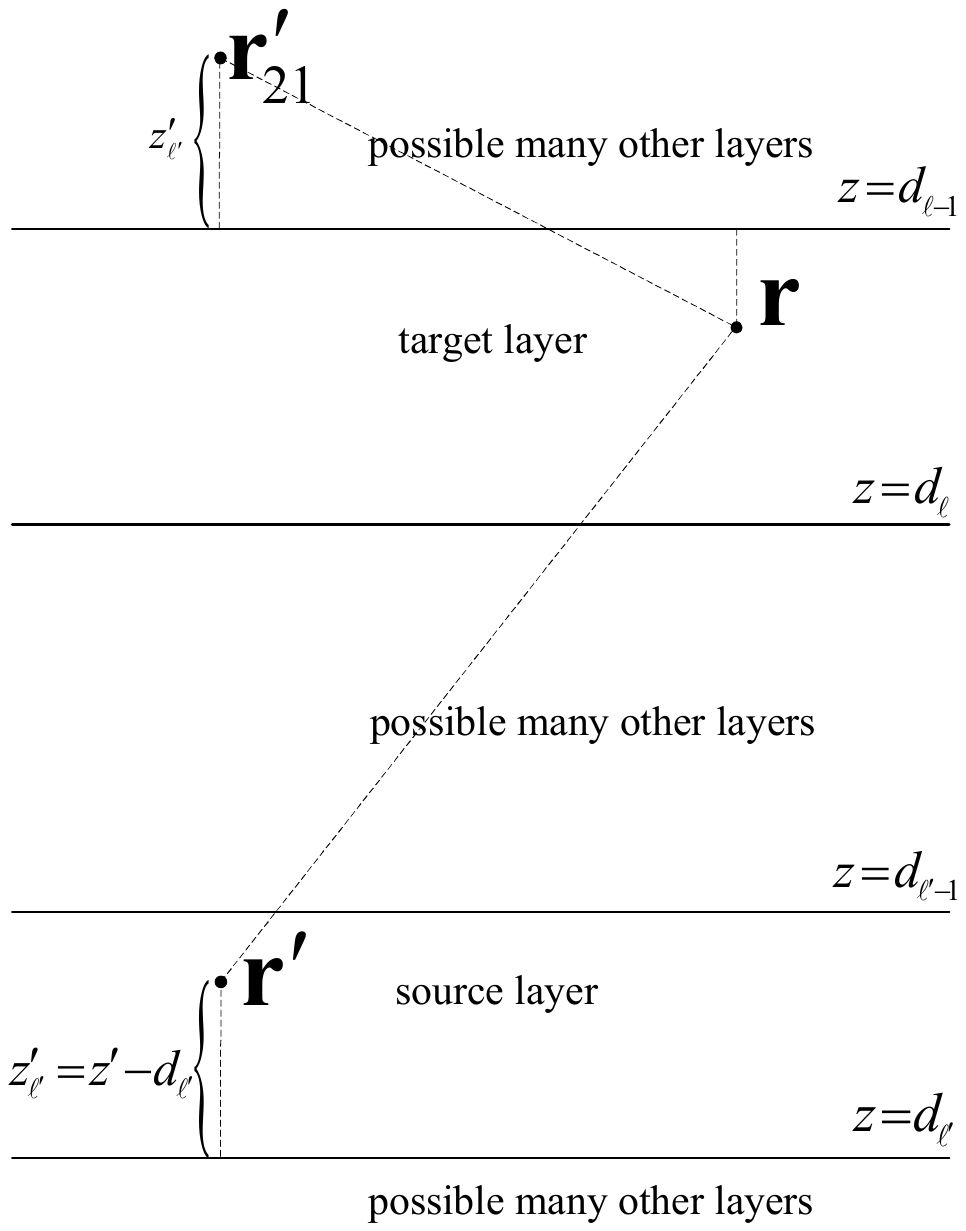}}
	\subfigure[$u_{\ell\ell'}^{22}$]{\includegraphics[scale=0.45]{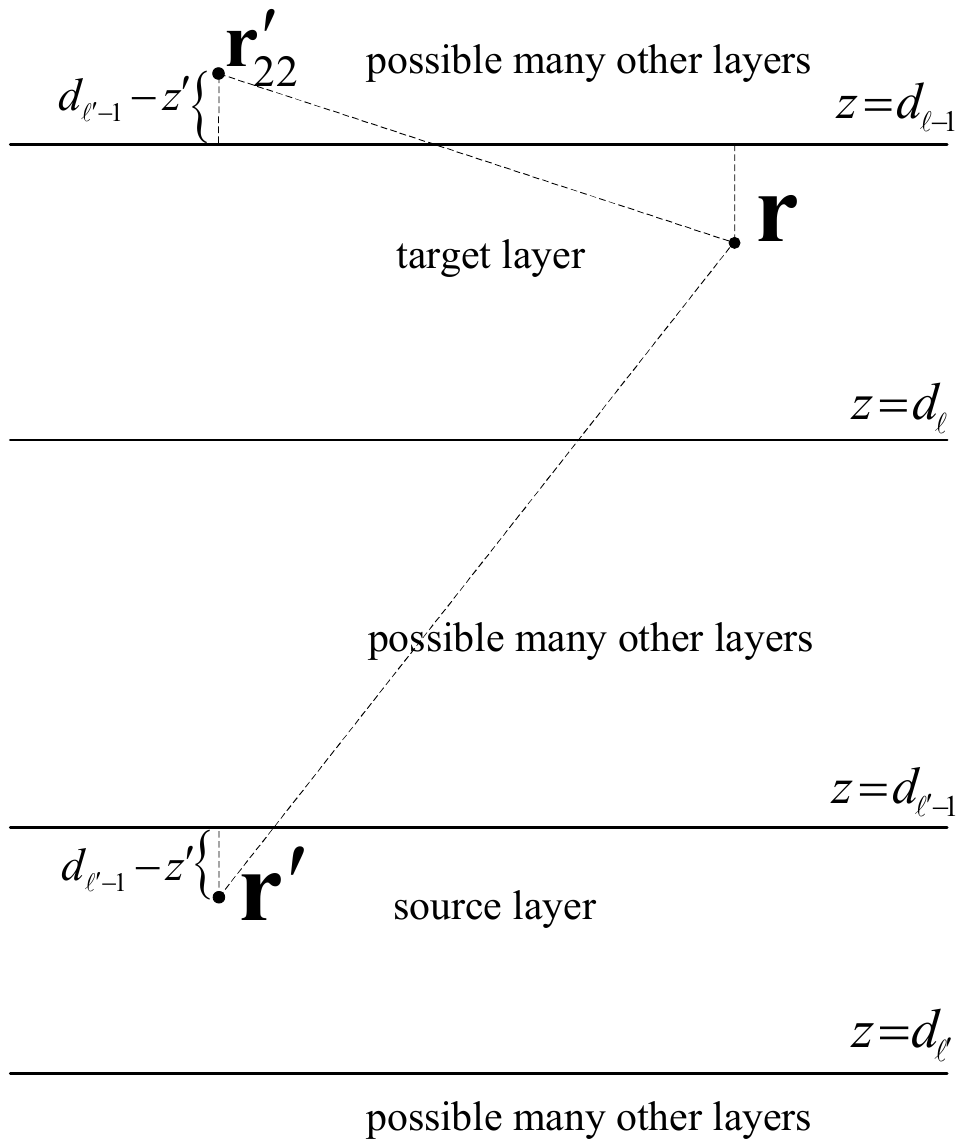}}
	\caption{Location of equivalent polarization sources for the computation of $u_{\ell\ell'}^{\mathfrak{ab}}$.}%
	\label{sourceimages}%
\end{figure}

We can see that the reaction potentials \eqref{reactfieldpolarization} represented by the equivalent polarization sources has similar form as the Sommerfeld-type integral representation \eqref{freegreenintegral} of the free space Green's function except for the extra density functions $\sigma_{\ell\ell'}^{\mathfrak{ab}}(k_{\rho})$. Moreover, recall the definition in \eqref{eqpolarizedsource} we have
\begin{equation*}
z>z'_{1\mathfrak b}, \quad{\rm and}\quad z<z'_{2\mathfrak b},\quad \mathfrak b=1, 2.
\end{equation*}
Therefore, the absolute value in the integral form \eqref{reactfieldpolarization} can be removed according to the index $\mathfrak a$. More precisely, define
\begin{equation}\label{mekernelimage}
{\mathcal E}^{+}(\bs r, \bs r'):=e^{\ri\bs k_{\alpha}\cdot(\bs\rho-\bs\rho')}e^{k_{\rho}(z-z^{\prime})},\quad {\mathcal E}^{-}(\bs r, \bs r^{\prime}):=e^{\ri\bs k_{\alpha}\cdot(\bs\rho-\bs\rho')}e^{-k_{\rho}(z-z^{\prime})},
\end{equation}
then
\begin{equation*}
\begin{split}
\tilde u_{\ell\ell'}^{1\mathfrak b}(\bs r, \bs r'_{1\mathfrak b})=\frac{1}{8\pi^2 }\int_0^{\infty}\int_0^{2\pi}{\mathcal E}^{-}(\bs r, \bs r'_{1\mathfrak b})\sigma_{\ell\ell'}^{1\mathfrak b}(k_{\rho})d\alpha dk_{\rho},\\
\tilde u_{\ell\ell'}^{2\mathfrak b}(\bs r, \bs r'_{2\mathfrak b})=\frac{1}{8\pi^2 }\int_0^{\infty}\int_0^{2\pi}\mathcal E^{+}(\bs r, \bs r'_{2\mathfrak b})\sigma_{\ell\ell'}^{2\mathfrak b}(k_{\rho})d\alpha dk_{\rho}.
\end{split}
\end{equation*}
Recall the expressions \eqref{zexponential}, we verify that
\begin{equation}\label{expkernelexp}
{\mathcal E}^{-}(\bs r, \bs r'_{1\mathfrak b})=e^{\ri\bs k_{\alpha}\cdot(\bs\rho-\bs\rho')}\mathcal{Z}_{\ell\ell'}^{1\mathfrak{b}}(z, z'),\quad {\mathcal E}^{+}(\bs r, \bs r'_{2\mathfrak b})=e^{\ri\bs k_{\alpha}\cdot(\bs\rho-\bs\rho')}\mathcal{Z}_{\ell\ell'}^{2\mathfrak{b}}(z, z'), \quad \mathfrak b=1, 2.
\end{equation}
Therefore, the reaction components \eqref{generalcomponents} is equal to the reaction potentials defined for associated equivalent polarization sources, i.e.,
\begin{equation}\label{generalcomponentsimag}
\begin{split}
u_{\ell\ell'}^{1\mathfrak b}(\bs r, \bs r')=\tilde u_{\ell\ell'}^{1\mathfrak b}(\bs r, \bs r'_{1\mathfrak b}),\quad
u_{\ell\ell'}^{2\mathfrak b}(\bs r, \bs r')=\tilde u_{\ell\ell'}^{2\mathfrak b}(\bs r, \bs r'_{2\mathfrak b}), \quad \mathfrak b=1, 2.
\end{split}
\end{equation}
A substitution into the expression of $\Phi_{\ell\ell^{\prime}}^{\mathfrak{ab}}(\boldsymbol{r}_{\ell i})$ in \eqref{freereactioncomponents} leads to
\begin{equation}\label{reactcompusingpolar}
\Phi_{\ell\ell^{\prime}}^{\mathfrak{ab}}(\boldsymbol{r}_{\ell i})=\sum
\limits_{j=1}^{N_{\ell^{\prime}}}Q_{\ell^{\prime}j}\tilde u_{\ell\ell^{\prime}%
}^{\mathfrak{ab}}(\boldsymbol{r}_{\ell i},\boldsymbol{r}^{\mathfrak ab}_{\ell^{\prime}j}%
),\quad \mathfrak{a,b}=1, 2,
\end{equation}
where
\begin{equation}\label{equivpolarcoord}
\begin{split}
&\boldsymbol{r}^{11}_{\ell^{\prime}j}=(x_{\ell'j}, y_{\ell'j}, d_{\ell}-(z_{\ell j}-d_{\ell'})), \quad\;\;\;\, \boldsymbol{r}^{12}_{\ell^{\prime}j}=(x_{\ell'j}, y_{\ell'j}, d_{\ell}-(d_{\ell'-1}-z_{\ell j})),\\
&\boldsymbol{r}^{21}_{\ell^{\prime}j}=(x_{\ell'j}, y_{\ell'j}, d_{\ell-1}+(z_{\ell j}-d_{\ell'})), \quad \boldsymbol{r}^{22}_{\ell^{\prime}j}=(x_{\ell'j}, y_{\ell'j}, d_{\ell-1}+(d_{\ell'-1}-z_{\ell j})),
\end{split}
\end{equation}
are coordinates of the associated equivalent polarization sources for the computation of reaction components $\Phi_{\ell\ell^{\prime}}^{\mathfrak{ab}}(\boldsymbol{r}_{\ell i})$, see Fig \ref{polarizedsource} for an illustration of $\{\bs r_{\ell'j}^{11}\}_{j=1}^{N_{\ell'}}$ and $\{\bs r_{\ell'j}^{21}\}_{j=1}^{N_{\ell'}}$.

By using the expression \eqref{reactcompusingpolar}, the computation of the reaction components can be performed between targets and associated equivalent polarization sources. The definition given by \eqref{equivpolarcoord} shows that the target particles $\{\bs r_{\ell i}\}_{i=1}^{N_{\ell}}$ and the corresponding equivalent polarization sources  are always located on different sides of an interface $z=d_{\ell-1}$ or $z=d_{\ell}$, see Fig. \ref{polarizedsource}. We still emphasize that the introduced equivalent polarization sources are separate with the target charges even in considering the reaction components for source and target charges in the same layer, see the numerical examples given in section 3.4. This property implies significant advantage of introducing equivalent polarization sources and using expression \eqref{reactcompusingpolar} in the FMMs for the reaction components $\Phi_{\ell\ell^{\prime}}^{\mathfrak{ab}}(\boldsymbol{r}_{\ell i})$, $\mathfrak a,b=1,2$. The numerical results presented in Section 4 show that the FMMs for reaction components have high efficiency as a direct consequence of the separation of the targets and equivalent polarization sources by interface.


\subsection{\color{black}Fast multipole algorithm}

In the development of FMM for reaction components $\Phi_{\ell\ell'}^{\mathfrak{ab}}(\bs r_{\ell i})$, we will adopt the expression \eqref{reactcompusingpolar} with equivalent polarization sources. Therefore, multipole and local expansions and corresponding translation operators for $\tilde u_{\ell\ell'}^{\mathfrak{ab}}(\bs r, \bs r'_{\mathfrak{ab}})$ are derived first. Inspired by source/target separation in \eqref{positivecase}, similar separations
\begin{equation}\label{sourcetargetseparationsc}
\begin{split}
{\mathcal E}^{-}(\bs r, \bs r'_{1\mathfrak b})=\mathcal E^{-}(\bs r, \bs r^{1\mathfrak b}_c)e^{\ri\bs k_{\alpha}\cdot(\bs\rho_c^{1\mathfrak b}-\bs\rho'_{1\mathfrak b})-k_{\rho}(z^{1\mathfrak b}_c-z'_{1\mathfrak b})},\\
{\mathcal E}^{+}(\bs r, \bs r'_{2\mathfrak b})=\mathcal E^{+}(\bs r, \bs r^{2\mathfrak b}_c)e^{\ri\bs k_{\alpha}\cdot(\bs\rho_c^{2\mathfrak b}-\bs\rho'_{2\mathfrak b})+k_{\rho}(z^{2\mathfrak b}_c-z'_{2\mathfrak b})},
\end{split}
\end{equation}
and
\begin{equation}\label{sourcetargetseparationtc}
\begin{split}
\mathcal E^-(\bs r, \bs r'_{1\mathfrak b})&=\mathcal E^{-}(\bs r_c^t, \bs r'_{1\mathfrak b})e^{\ri\bs k_{\alpha}\cdot(\bs\rho-\bs\rho_c^t)-k_{\rho}(z-z_c^t)},\\
\mathcal E^+(\bs r, \bs r'_{2\mathfrak b})&=\mathcal E^{+}(\bs r_c^t, \bs r'_{2\mathfrak b})e^{\ri\bs k_{\alpha}\cdot(\bs\rho-\bs\rho_c^t)+k_{\rho}(z-z_c^t)},
\end{split}
\end{equation}
for $\mathfrak b=1, 2$ are introduced by inserting the source center $\bs r^{\mathfrak {ab}}_c=(x_c^{\mathfrak{ab}},y_c^{\mathfrak{ab}}, z_c^{\mathfrak{ab}} )$ and the target center $\bs r_c^t=(x_c^t, y_c^t, z_c^t)$, respectively. Here, we also use notations $\bs\rho_c^{\mathfrak{ab}}=(x_c^{\mathfrak{ab}}, y_c^{\mathfrak{ab}})$, $\bs\rho_c^t=(x_c^t, y_c^t)$ for coordinates projected in $xy$-plane.
Now, applying proposition \ref{prop:Funk-Hecke-limit} gives us the following spherical harmonic expansions:
\begin{equation}\label{planewavesphexpmultipole}
\begin{split}
&e^{\ri\bs k_{\alpha}\cdot(\bs\rho_c^{2\mathfrak b}-\bs\rho'_{2\mathfrak b})+k_{\rho}(z^{2\mathfrak b}_c-z'_{2\mathfrak b})}=\sum\limits_{n=0}^{\infty}\sum\limits_{m=-n}^{n} C_n^m  (r_c^{2\mathfrak b})^{n}\overline{Y_n^m(\theta^{2\mathfrak b}_c,\pi+\varphi^{2\mathfrak b}_c)}k_{\rho}^ne^{\ri m\alpha},\\
&e^{\ri\bs k_{\alpha}\cdot(\bs\rho_c^{1\mathfrak b}-\bs\rho'_{1\mathfrak b})-k_{\rho}(z^{1\mathfrak b}_c-z'_{1\mathfrak b})}=\sum\limits_{n=0}^{\infty}\sum\limits_{m=-n}^{n} C_n^m  (r_c^{1\mathfrak b})^n\overline{Y_n^m(\pi-\theta^{1\mathfrak b}_c,\pi+\varphi^{1\mathfrak b}_c)}k_{\rho}^ne^{\ri m\alpha},\\
\end{split}
\end{equation}
and
\begin{equation}\label{planewavesphexplocal}
\begin{split}
&e^{\ri\bs k_{\alpha}\cdot(\bs\rho-\bs\rho_c^t)- k_{\rho}(z-z_c^t)}=\sum\limits_{n=0}^{\infty}\sum\limits_{m=-n}^{n} C_n^m r_t^nY_n^m(\theta_t,\varphi_t)k_{\rho}^ne^{-\ri m\alpha},\\
&e^{\ri\bs k_{\alpha}\cdot(\bs\rho-\bs\rho_c^t)+ k_{\rho}(z-z_c^t)}=\sum\limits_{n=0}^{\infty}\sum\limits_{m=-n}^{n} C_n^m r_t^nY_n^m(\pi-\theta_t,\varphi_t)k_{\rho}^ne^{-\ri m\alpha},
\end{split}
\end{equation}
where $(r_c^{\mathfrak{ab}}, \theta_c^{\mathfrak{ab}}, \varphi_c^{\mathfrak{ab}})$ is the spherical coordinates of $\bs r'_{\mathfrak{ab}}-\bs r_c^{\mathfrak{ab}}$.
By equalities
$$Y_n^{m}(\pi-\theta,\varphi)=(-1)^{n+m}Y_n^{m}(\theta,\varphi),\quad Y_n^{m}(\theta,\pi+\varphi)=(-1)^{m}Y_n^{m}(\theta,\varphi),$$
the above spherical harmonic expansions \eqref{planewavesphexpmultipole}-\eqref{planewavesphexplocal} together with source/target separation \eqref{sourcetargetseparationsc} and \eqref{sourcetargetseparationtc} lead to
\begin{equation}\label{integrandme}
\begin{split}
\mathcal E^-(\bs r, \bs r'_{1\mathfrak b})&=\mathcal E^-(\bs r, \bs r_c^{1\mathfrak b})\sum\limits_{n=0}^{\infty}\sum\limits_{m=-n}^{n} (-1)^nC_n^m ( r^{1\mathfrak b}_c)^n\overline{Y_n^m(\theta^{1\mathfrak b}_c,\varphi^{1\mathfrak b}_c)}k_{\rho}^ne^{\ri m\alpha},\\
\mathcal E^+(\bs r, \bs r'_{2\mathfrak b})&=\mathcal E^+(\bs r, \bs r_c^{2\mathfrak b})\sum\limits_{n=0}^{\infty}\sum\limits_{m=-n}^{n} (-1)^{m}C_n^m ( r^{2\mathfrak b}_c)^n\overline{Y_n^m(\theta^{2\mathfrak b}_c,\varphi^{2\mathfrak b}_c)}k_{\rho}^ne^{\ri m\alpha},
\end{split}
\end{equation}
and
\begin{equation}\label{integrandle}
\begin{split}
\mathcal E^-(\bs r, \bs r'_{1\mathfrak b})&=\mathcal E^-(\bs r_c^t, \bs r'_{1\mathfrak b})\sum\limits_{n=0}^{\infty}\sum\limits_{m=-n}^{n} C_n^m r_t^nY_n^m(\theta_t,\varphi_t)k_{\rho}^ne^{-\ri m\alpha},\\
\mathcal E^+(\bs r, \bs r'_{2\mathfrak b})&=\mathcal E^+(\bs r_c^t, \bs r'_{2\mathfrak b})\sum\limits_{n=0}^{\infty}\sum\limits_{m=-n}^{n} (-1)^{n+m}C_n^m r_t^nY_n^m(\theta_t,\varphi_t)k_{\rho}^ne^{-\ri m\alpha},
\end{split}
\end{equation}
for $\mathfrak b=1, 2$. Then, a substitution of \eqref{integrandme} and \eqref{integrandle} into \eqref{generalcomponentsimag} gives a ME
\begin{equation}\label{melayerupgoingimage}
\begin{split}
\tilde u_{\ell\ell'}^{\mathfrak{ab}}(\bs r, \bs r'_{\mathfrak{ab}})=\sum\limits_{n=0}^{\infty}\sum\limits_{m=-n}^{n}  M_{nm}^{\mathfrak{ab}}\widetilde{\mathcal F}_{nm}^{\mathfrak{ab}}(\bs r, \bs r_c^{\mathfrak{ab}}), \quad M_{nm}^{\mathfrak{ab}}=c_n^{-2} (r_c^{\mathfrak{ab}})^n\overline{Y_n^{m}(\theta_c^{\mathfrak{ab}},\varphi_c^{\mathfrak{ab}})},
\end{split}
\end{equation}
at equivalent polarization source centers $\bs r_c^{\mathfrak{ab}}$ and LE
\begin{equation}\label{lelayerimage}
\begin{split}
\tilde u_{\ell\ell'}^{\mathfrak{ab}}(\bs r, \bs r'_{\mathfrak{ab}})=\sum\limits_{n=0}^{\infty}\sum\limits_{m=-n}^{n} L_{nm}^{\mathfrak{ab}}r_t^nY_n^m(\theta_t,\varphi_t)
\end{split}
\end{equation}
at target center $\bs r_c^t$, respectively. Here, $\widetilde{\mathcal F}_{nm}^{\mathfrak{ab}}(\bs r, \bs r_c^{\mathfrak{ab}})$ are given in forms of Sommerfeld-type integrals
\begin{equation}\label{mebasis}
\begin{split}
\widetilde{\mathcal F}_{nm}^{1\mathfrak b}(\bs r, \bs r_c^{1\mathfrak b})=&\frac{(-1)^{n}c_n^2C_n^m}{8\pi^2}\int_0^{\infty}\int_0^{2\pi}{\mathcal E}^{-}(\bs r, \bs r_c^{1\mathfrak b})\sigma_{\ell\ell'}^{1\mathfrak b}(k_{\rho})k_{\rho}^{n}e^{\ri m\alpha}d\alpha dk_{\rho},\\
\widetilde{\mathcal F}_{nm}^{2\mathfrak b}(\bs r, \bs r_c^{2\mathfrak b})=&\frac{(-1)^mc_n^2C_n^m}{8\pi^2}\int_0^{\infty}\int_0^{2\pi}{\mathcal E}^{+}(\bs r, \bs r_c^{2\mathfrak b})\sigma_{\ell\ell'}^{2\mathfrak b}(k_{\rho})k_{\rho}^{n}e^{\ri m\alpha}d\alpha dk_{\rho},
\end{split}
\end{equation}
and the LE coefficients are given by
\begin{equation}\label{lecoeffimage}
\begin{split}
L_{nm}^{1\mathfrak b}=&\frac{C_n^m}{8\pi^2 }\int_0^{\infty}\int_0^{2\pi}{\mathcal E}^{-}(\bs r_c^t, \bs r'_{1\mathfrak b})\sigma_{\ell\ell^{\prime}}^{1\mathfrak b}(k_{\rho})k_{\rho}^{n}e^{-\ri m\alpha}d\alpha dk_{\rho},\\
L_{nm}^{2\mathfrak b}=&\frac{(-1)^{n+m}C_n^m}{8\pi^2 }\int_0^{\infty}\int_0^{2\pi}{\mathcal E}^{+}(\bs r_c^t, \bs r'_{2\mathfrak b})\sigma_{\ell\ell^{\prime}}^{2\mathfrak b}(k_{\rho})k_{\rho}^{n}e^{-\ri m\alpha}d\alpha dk_{\rho}.
\end{split}
\end{equation}

Let us give some numerical examples to show the convergence behavior of the MEs in \eqref{melayerupgoingimage}. Consider the MEs of $\tilde u_{11}^{11}(\bs r, \bs r'_{11})$ and $\tilde u_{11}^{22}(\bs r, \bs r'_{22})$ in a three-layer media with $\varepsilon_0=21.2$, $\varepsilon_1=47.5$, $\varepsilon_2=62.8$, $d_0=0, d_1=-1.2$. In all the following examples, we fix $\bs r'=(0.625, 0.5, -0.1)$ in the middle layer and use definition \eqref{eqpolarizedsource} to determine $\bs r_{11}^{\prime}=(0.625, 0.5, -2.3)$, $\bs r_{22}^{\prime}=(0.625, 0.5, 0.1)$. The centers for MEs are set to be  $\bs r_c^{11}=(0.6, 0.6, -2.4)$, $\bs r_c^{22}=(0.6, 0.6, 0.2)$ which implies $|\bs r_{11}'-\bs r_c^{11}|=|\bs r_{22}'-\bs r_c^{22}|\approx0.1436$. For both components, we shall test three targets given as follows
\begin{equation*}
\bs r_1=(0.5, 0.625, -0.1),\quad \bs r_2= (0.5, 0.625, -0.6), \quad \bs r_3=(0.5, 0.625, -1.1).
\end{equation*}
The relative errors against truncation number $p$ are depicted in Fig. \ref{meconvergence}. We also plot the convergence rates similar with that of the ME of free space Green's function, i.e., $O\Big[\Big(\frac{|\bs r-\bs r_c^{\mathfrak{ab}}|}{|\bs r_{\mathfrak{ab}}'-\bs r_c^{\mathfrak{ab}}|}\Big)^{p+1}\Big]$ as reference convergence rates. The results clearly show that the MEs of the reaction components $u_{11}^{\mathfrak{ab}}(\bs r, \bs r'_{\mathfrak{ab}})$ have spectral convergence rate $O\Big[\Big(\frac{|\bs r-\bs r_c^{\mathfrak{ab}}|}{|\bs r_{\mathfrak{ab}}'-\bs r_c^{\mathfrak{ab}}|}\Big)^{p+1}\Big]$ similar as that of free space Green's function. Actually, their exponential convergence has been determined by the Euclidean distance between target and polarization source. Therefore, the MEs \eqref{melayerupgoingimage} can be used to develop FMM for efficient computation of the reaction components as in the development of classic FMM for the free space Green's function.
\begin{figure}[ptbh]
	\center
	\subfigure[$\tilde u_{11}^{11}(\bs r, \bs r'_{11})$]{\includegraphics[scale=0.4]{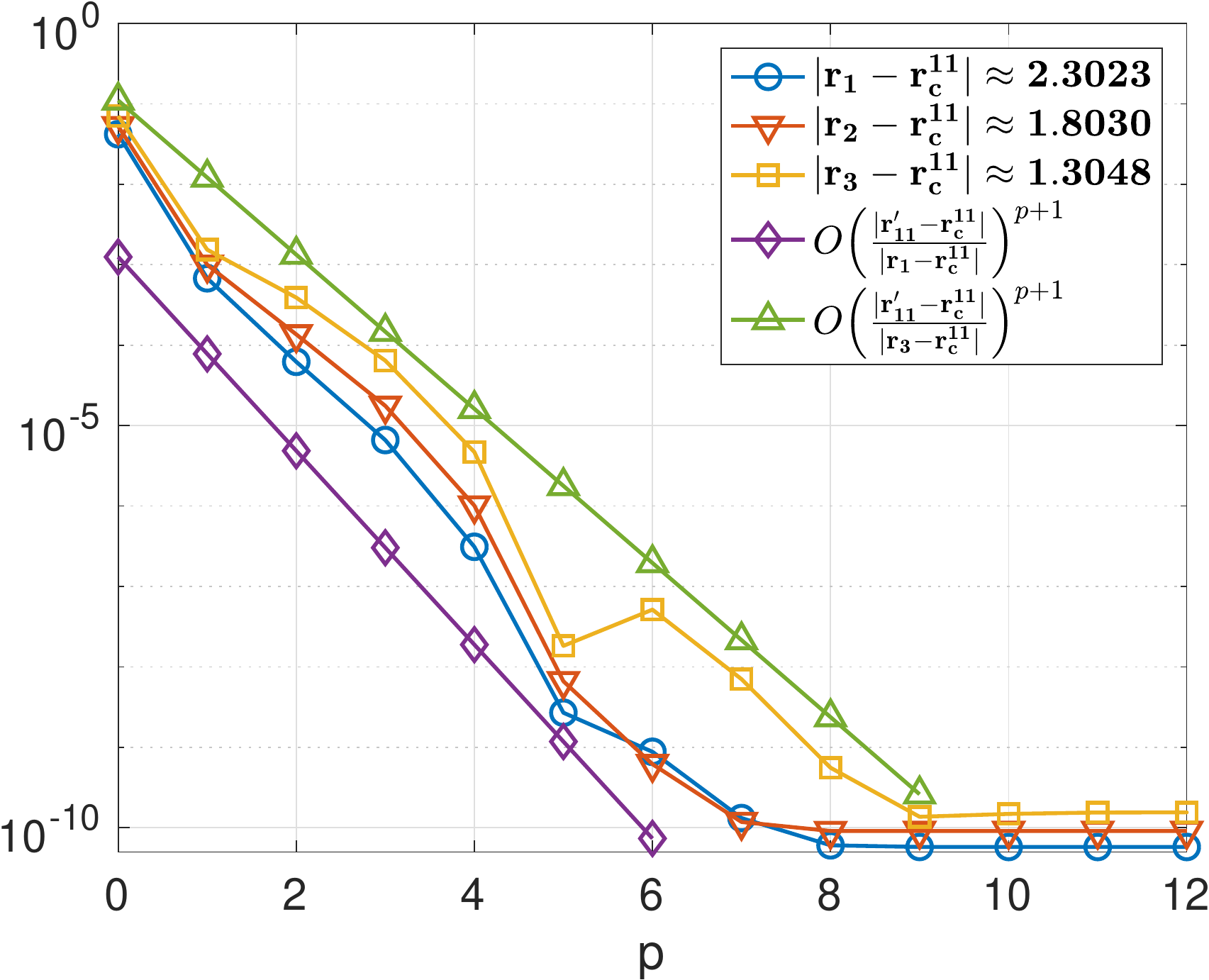}}\quad
	\subfigure[$\tilde u_{11}^{22}(\bs r, \bs r'_{22})$]{\includegraphics[scale=0.4]{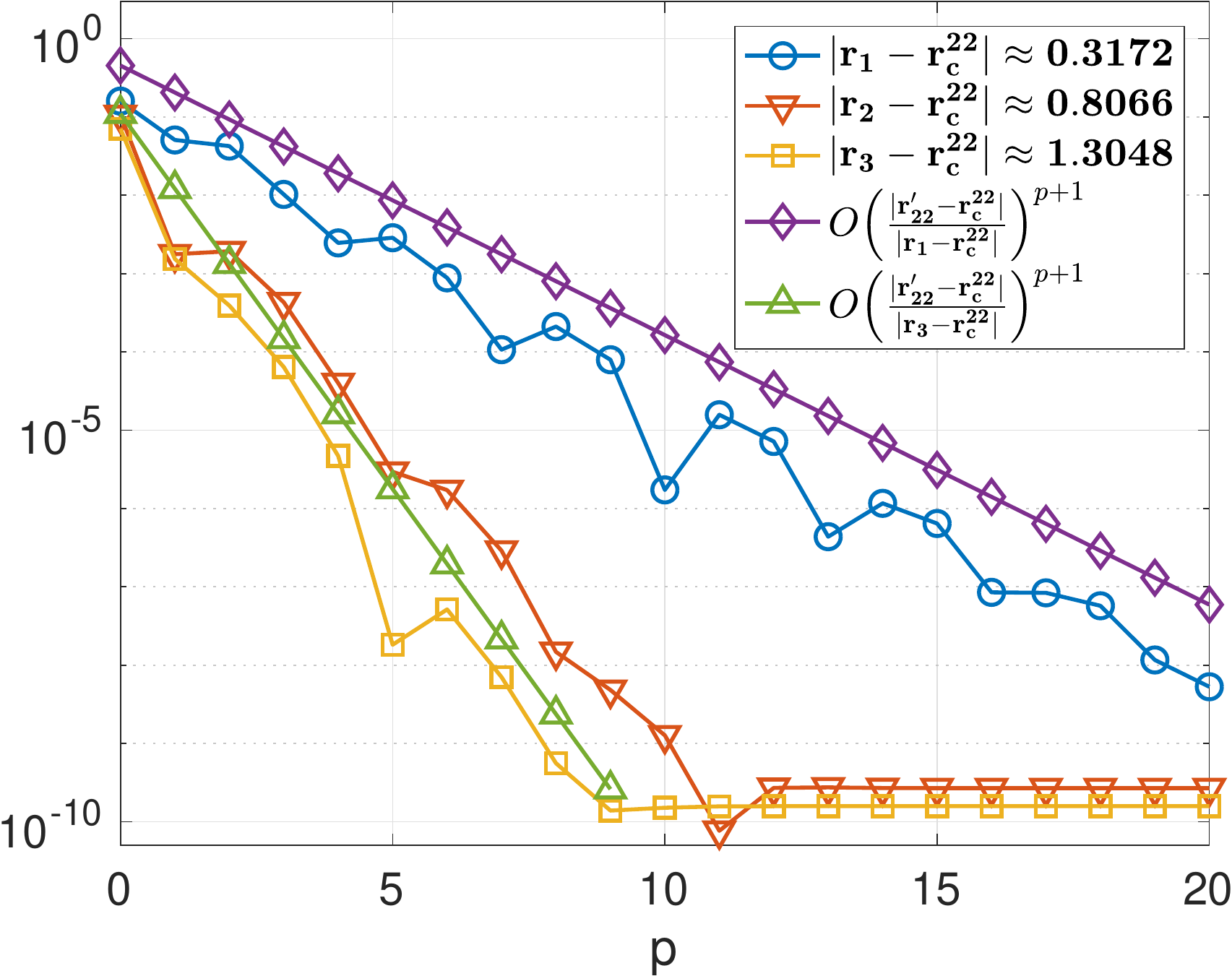}}
	\caption{Spectral convergence of the MEs for reaction components.}%
	\label{meconvergence}%
\end{figure}

According to the definition of $\mathcal E^-(\bs r, \bs r')$ and $\mathcal E^+(\bs r, \bs r')$  in \eqref{expkernelexp},  the centers $\bs r_c^t$ and $\bs r_c^{\mathfrak{ab}}$ have to satisfy
\begin{equation}\label{imagecentercond}
z_c^{1\mathfrak b}<d_{\ell}, \quad z_c^{2\mathfrak b}>d_{\ell-1},\quad z_c^t>d_{\ell} \;\; {\rm for}\;\; \tilde u_{\ell\ell'}^{1\mathfrak b}(\bs r, \bs r'_{1\mathfrak b}); \quad z_c^t<d_{\ell-1} \;\; {\rm for}\;\; \tilde u_{\ell\ell'}^{2\mathfrak b}(\bs r, \bs r'_{2\mathfrak b}),
\end{equation}
to ensure the exponential decay in $\mathcal E^-(\bs r, \bs r^{1\mathfrak b}_c), \mathcal E^+(\bs r, \bs r^{2\mathfrak b}_c)$ and $\mathcal E^{-}(\bs r_c^t, \bs r'_{1\mathfrak b}), \mathcal E^{+}(\bs r_c^t, \bs r'_{2\mathfrak b})$ as $k_{\rho}\rightarrow\infty$ and hence the convergence of the corresponding Sommerfeld-type integrals in \eqref{mebasis} and \eqref{lecoeffimage}. These restrictions can be met easily in practice, as we are considering targets in the $\ell$-th layer and the equivalent polarized coordinates are always located either above the interface $z=d_{\ell-1}$ or below the interface $z=d_{\ell}$. More details will discussed below in the presentation of the FMM algorithm.
	

We still need to consider the center shifting and translation operators for ME \eqref{melayerupgoingimage} and LE \eqref{lelayerimage}. A desirable feature of the expansions of reaction components discussed above is that the formula \eqref{melayerupgoingimage} for the ME coefficients and the formula \eqref{lelayerimage} for the LE have exactly the same form as the formulas of ME coefficients and LE for the free space Green's function. Therefore, the center shifting for MEs and LEs of reaction components are exactly the same as free space case given in \eqref{metome}-\eqref{letole}.

Next, we derive the translation operator from the ME \eqref{melayerupgoingimage} to the LE \eqref{lelayerimage}. Recall the definition of exponential functions in \eqref{mekernelimage}, ${\mathcal E}^{-}(\bs r, \bs r_c^{1\mathfrak b})$ and ${\mathcal E}^{+}(\bs r, \bs r_c^{2\mathfrak b})$ can have splitting
\begin{equation*}
\begin{split}
{\mathcal E}^{-}(\bs r, \bs r_c^{1\mathfrak{b}})&={\mathcal E}^{-}(\bs r_c^t, \bs r_c^{1\mathfrak b})e^{\ri\bs k_{\alpha}\cdot(\bs\rho-\bs\rho_c^t)}e^{-k_{\rho} (z-z_c^t)},\\
{\mathcal E}^{+}(\bs r, \bs r_c^{2\mathfrak b})&={\mathcal E}^{+}(\bs r_c^t, \bs r_c^{2\mathfrak b})e^{\ri\bs k_{\alpha}\cdot(\bs\rho-\bs\rho_c^t)}e^{k_{\rho} (z-z_c^t)}.
\end{split}
\end{equation*}
Applying spherical harmonic expansion \eqref{extfunkheckelim} again, we obtain
\begin{equation*}
e^{\ri\bs k_{\alpha}\cdot(\bs\rho-\bs\rho_c^t)}e^{\pm k_{\rho} (z-z_c^t)}=\sum\limits_{n=0}^{\infty}\sum\limits_{m=-n}^{n} (\mp 1)^{n+m}C_n^m r_t^nY_n^m(\theta_t,\varphi_t)k_{\rho}^ne^{-\ri m\alpha}.
\end{equation*}
Substituting into \eqref{melayerupgoingimage}, the ME is translated to LE \eqref{lelayerimage} via
\begin{equation}\label{metoleimage1}
L_{nm}^{1\mathfrak{b}}=\sum\limits_{n'=0}^{\infty}\sum\limits_{|m'|=0}^{n'}T_{nm,n'm'}^{1\mathfrak{b}}M_{n'm'}^{1\mathfrak{b}},\quad L_{nm}^{2\mathfrak{b}}=(-1)^{n+m}\sum\limits_{n'=0}^{\infty}\sum\limits_{|m'|=0}^{n'}T_{nm,n'm'}^{2\mathfrak{b}}M_{n'm'}^{2\mathfrak{b}},
\end{equation}
and the M2L translation operators are given in integral forms as follows
\begin{equation}\label{metoleimage2}
\begin{split}
T_{nm,n'm'}^{1\mathfrak{b}}=&\frac{(-1)^{n'}D_{nm}^{n'm'}}{8\pi^2}\int_0^{\infty}\int_0^{2\pi}{\mathcal E}^{-}(\bs r_c^t, \bs r_c^{1\mathfrak{b}})\sigma_{\ell\ell'}^{1\mathfrak{b}}(k_{\rho})k_{\rho}^{n+n'}e^{\ri (m'-m)\alpha}d\alpha dk_{\rho},\\
T_{nm,n'm'}^{2\mathfrak{b}}=&\frac{(-1)^{m'}D_{nm}^{n'm'}}{8\pi^2}\int_0^{\infty}\int_0^{2\pi}{\mathcal E}^{+}(\bs r_c^t, \bs r_c^{2\mathfrak{b}})\sigma_{\ell\ell'}^{2\mathfrak{b}}(k_{\rho})k_{\rho}^{n+n'}e^{\ri (m'-m)\alpha}d\alpha dk_{\rho},
\end{split}
\end{equation}
where
$$D_{nm}^{n'm'}=c_{n'}^2C_n^mC_{n'}^{m'}.$$
Again, the convergence of the Sommerfeld-type integrals in \eqref{metoleimage2} is ensured by the conditions in \eqref{imagecentercond}.

{\color{black}
The framework of the traditional FMM together with ME \eqref{melayerupgoingimage}, LE \eqref{lelayerimage}, M2L translation \eqref{metoleimage1}-\eqref{metoleimage2} and free space ME and LE center shifting \eqref{metome} and \eqref{letole} constitute the FMM for the computation of reaction components $\Phi_{\ell\ell'}^{\mathfrak{ab}}(\bs r_{\ell i})$, $\mathfrak a, \mathfrak b=1, 2$. In the FMM for each reaction component, a large box is defined to include all equivalent polarization sources associated to the reaction component and corresponding target charges, and an adaptive tree structure will be built by a bisection procedure, see. Fig. \ref{polarizedsource}.  Note that the validity of the ME \eqref{melayerupgoingimage}, LE \eqref{lelayerimage} and M2L translation \eqref{metoleimage1} used in the algorithm imposes restrictions \eqref{imagecentercond} on the centers, accordingly. This can be ensured by setting the largest box for the specific reaction component to be equally divided by the interface between equivalent polarized sources and corresponding targets, see. Fig. \ref{polarizedsource}. Thus, the largest box for the FMM implementation will be different for different reaction components.  With this setting, all source and target boxes of higher than zeroth level in the adaptive tree structure will have centers below or above the interfaces, accordingly. The fast multipole algorithm for the computation of a general reaction component $\Phi_{\ell\ell'}^{\mathfrak{ab}}(\bs r_{\ell i})$ is summarized in Algorithm 1.
Total interactions given by \eqref{totalinteraction} will be obtained by first calculating all components and then summing them up where the algorithm is presented in Algorithm 2.}

\subsection{\color{black}Efficient computation of Sommerfeld-type integrals}
It is clear that the FMM demands efficient computation of the double integrals involved in the MEs, LEs and M2L translations. {\color{black}In this section, we present an accurate and efficient way to compute these double integrals.} Firstly, the double integrals can be simplified by using the following identity
\begin{equation}
J_n(z)=\frac{1}{2\pi \ri^n}\int_0^{2\pi}e^{\ri z\cos\theta+\ri n\theta}d\theta.
\end{equation}
In particular, multipole expansion functions in \eqref{mebasis} can be simplified as
\begin{equation*}
\begin{split}
\widetilde{\mathcal F}_{nm}^{1\mathfrak b}(\bs r, \bs r_c^{1\mathfrak b})=&\frac{(-1)^{n}c_n^2C_n^m\ri^me^{\ri m\phi_s^{1\mathfrak b}}}{4\pi}\int_0^{\infty}J_m(k_{\rho}\rho_s^{1\mathfrak b})e^{-k_{\rho}(z-z_c^{1\mathfrak b})}\sigma_{\ell\ell'}^{1\mathfrak b}(k_{\rho})k_{\rho}^{n}dk_{\rho},\\
\widetilde{\mathcal F}_{nm}^{2\mathfrak b}(\bs r, \bs r_c^{2\mathfrak b})=&\frac{(-1)^mc_n^2C_n^m\ri^me^{\ri m\phi_s^{2\mathfrak b}}}{4\pi}\int_0^{\infty}J_m(k_{\rho}\rho_s^{2\mathfrak b})e^{-k_{\rho}(z_c^{2\mathfrak b}-z)}\sigma_{\ell\ell'}^{2\mathfrak b}(k_{\rho})k_{\rho}^{n} dk_{\rho},
\end{split}
\end{equation*}
and the expression \eqref{lecoeffimage} for LE coefficients can be simplified as
\begin{equation*}
\begin{split}
L_{nm}^{1\mathfrak b}=&\frac{(-1)^mC_n^m\ri^{-m}e^{-\ri m\varphi_t^{1\mathfrak{b}}}}{4\pi }\int_0^{\infty}J_{m}(k_{\rho}\rho_t^{1\mathfrak{b}})e^{-k_{\rho}(z_c^t-z_{1\mathfrak b}')}\sigma_{\ell\ell^{\prime}}^{1\mathfrak b}(k_{\rho})k_{\rho}^{n} dk_{\rho},\\
L_{nm}^{2\mathfrak b}=&\frac{(-1)^{n}C_n^m\ri^{-m}e^{-\ri m\varphi_t^{2\mathfrak{b}}}}{4\pi }\int_0^{\infty}J_{m}(k_{\rho}\rho_t^{2\mathfrak{b}})e^{-k_{\rho}(z_{2\mathfrak b}'-z_c^t)}\sigma_{\ell\ell^{\prime}}^{2\mathfrak b}(k_{\rho})k_{\rho}^{n}dk_{\rho}
\end{split}
\end{equation*}
for $\mathfrak b=1, 2$, where $(\rho_s^{\mathfrak{ab}}, \varphi_s^{\mathfrak{ab}})$ and $(\rho_t^{\mathfrak{ab}}, \varphi_t^{\mathfrak{ab}})$ are polar coordinates of $\bs r-\bs r_c^{\mathfrak{ab}}$ and $\bs r_c^t-\bs r'_{\mathfrak{ab}}$ projected in the $xy$ plane.
\begin{algorithm}\label{algorithm1}
	\caption{FMM for general reaction component $\Phi_{\ell\ell'}^{\mathfrak{ab}}(\bs r_{\ell i}), i=1, 2, \cdots, N_{\ell}$}
	\begin{algorithmic}
		\State Determine equivalent polarized coordinates for all source particles.
		\State {\color{black}Generate an adaptive hierarchical tree structure with polarization sources $\{Q_{\ell'j}, \bs r_{\ell'j}^{\mathfrak{ab}}\}_{j=1}^{N_{\ell'}}$, targets $\{\bs r_{\ell i}\}_{i=1}^{N_{\ell}}$.}
		\State{\bf Upward pass:}
		\For{$l=H \to 0$}
		\For{all boxes $j$ on source tree level $l$ }
		\If{$j$ is a leaf node}
		\State{form the free-space ME using Eq. \eqref{melayerupgoingimage}.}
		\Else
		\State form the free-space ME by merging children's expansions using the free-space center shift translation operator \eqref{metome}.
		\EndIf
		\EndFor
		\EndFor
		\State{\bf Downward pass:}
		\For{$l=1 \to H$}
		\For{all boxes $j$ on target tree level $l$ }
		\State shift the LE of $j$'s parent to $j$ itself using the free-space shifting \eqref{letole}.
		\State collect interaction list contribution using the source box to target box translation operator in Eq. \eqref{metoleimage1} while $T_{nm,n'm'}^{\mathfrak{ab}}$ are computed using \eqref{coefintable} and recurrence formula \eqref{recurrence}.
		\EndFor
		\EndFor
		\State {\bf Evaluate LEs:}
		\For{each leaf node (childless box)}
		\State evaluate the LE at each particle location.
		\EndFor
		\State {\bf Local Direct Interactions:}
		\For{$i=1 \to N$ }
		\State compute Eq. \eqref{reactcompusingpolar} of target particle $i$ in the neighboring boxes {\color{black}using the mixed DE-SE quadrature for $I_{00}^{\mathfrak{ab}}(\rho, z)$.}
		\EndFor
	\end{algorithmic}
\end{algorithm}
\begin{algorithm}\label{algorithm2}
	\caption{3-D FMM for \eqref{totalinteraction}}
	\begin{algorithmic}
		\For{$\ell=0 \to L$}
		\State{use free space FMM to compute $\Phi_{\ell}^{free}(\bs r_{\ell i})$, $i=1, 2, \cdots, N_{\ell}$.}
		\EndFor
		\For{$\ell=0 \to L-1$}
		\For{$\ell'=0 \to L-1$ }
		\State use {\bf Algorithm 1} to compute $\Phi_{\ell\ell'}^{11}(\bs r_{\ell i})$, $i=1, 2, \cdots, N_{\ell}$.
		\EndFor
		\For{$\ell'=1 \to L$ }
		\State use {\bf Algorithm 1} to compute $\Phi_{\ell\ell'}^{12}(\bs r_{\ell i})$, $i=1, 2, \cdots, N_{\ell}$.
		\EndFor
		\EndFor
		\For{$\ell=1 \to L$}
		\For{$\ell'=0 \to L-1$ }
		\State use {\bf Algorithm 1} to compute $\Phi_{\ell\ell'}^{21}(\bs r_{\ell i})$, $i=1, 2, \cdots, N_{\ell}$.
		\EndFor
		\For{$\ell'=1 \to L$ }
		\State use {\bf Algorithm 1} to compute $\Phi_{\ell\ell'}^{22}(\bs r_{\ell i})$, $i=1, 2, \cdots, N_{\ell}$.
		\EndFor
		\EndFor
	\end{algorithmic}
\end{algorithm}
Moreover, the M2L translation \eqref{metoleimage2} can be simplified as
\begin{equation}\label{me2lesimplified}
\begin{split}
T_{nm,n'm'}^{1\mathfrak{b}}=&\frac{(-1)^{n'}\widetilde D_{nm}^{n'm'}(\varphi_{ts}^{1\mathfrak b})}{4\pi}\int_0^{\infty}k_{\rho}^{n+n'} J_{m'-m}(k_{\rho}\rho_{ts}^{1\mathfrak b})e^{-k_{\rho}(z_c^t-z_c^{1\mathfrak b})}\sigma_{\ell\ell'}^{1\mathfrak{b}}(k_{\rho})dk_{\rho},\\
T_{nm,n'm'}^{2\mathfrak{b}}=&\frac{(-1)^{m'}\widetilde D_{nm}^{n'm'}(\varphi_{ts}^{2\mathfrak b})}{4\pi}\int_0^{\infty}k_{\rho}^{n+n'} J_{m'-m}(k_{\rho}\rho_{ts}^{2\mathfrak b})e^{-k_{\rho}(z_c^{2\mathfrak b}-z_c^t)}\sigma_{\ell\ell'}^{2\mathfrak{b}}(k_{\rho})dk_{\rho},
\end{split}
\end{equation}
where $(\rho_{ts}^{\mathfrak{ab}}, {\varphi}_{ts}^{\mathfrak{ab}})$ is the polar coordinates of $\bs r_c^t-\bs r_c^{\mathfrak{ab}}$ projected in the $xy$ plane,
$$\widetilde D_{nm}^{n'm'}(\varphi)=D_{nm}^{n'm'}\ri^{m'-m}e^{\ri(m'-m){\varphi}}.$$
Define integral
\begin{equation}\label{uniformintegral}
\begin{split}
I_{nm}^{\mathfrak{ab}}(\rho, z):=\int_0^{\infty}J_{m}(k_{\rho}\rho)\frac{k_{\rho}^{n}e^{-k_{\rho}z}}{\sqrt{(n+m)!(n-m)!}}\sigma_{\ell\ell'}^{\mathfrak{ab}}(k_{\rho}) dk_{\rho},
\end{split}
\end{equation}
then
\begin{equation}\label{coefintable}
\begin{split}
&\widetilde{\mathcal F}_{nm}^{1\mathfrak b}(\bs r, \bs r_c^{1\mathfrak b})=\frac{c_ne^{\ri m\varphi_s^{1\mathfrak b}}}{4\pi}I_{nm}^{1\mathfrak b}(\rho_s^{1\mathfrak b}, z-z_c^{1\mathfrak b}),\\
& \widetilde{\mathcal F}_{nm}^{2\mathfrak b}(\bs r, \bs r_c^{2\mathfrak b})=\frac{(-1)^{n+m}c_ne^{\ri m\varphi_s^{2\mathfrak b}}}{4\pi}I_{nm}^{2\mathfrak b}(\rho_s^{2\mathfrak b}, z_c^{2\mathfrak b}-z),\\
&L_{nm}^{1\mathfrak b}=\frac{(-1)^nc_n^{-1}e^{-\ri m\varphi_t^{1\mathfrak{b}}}}{4\pi }I_{nm}^{1\mathfrak b}(\rho_t^{1\mathfrak b}, z_c^t-z'_{1\mathfrak b}),\\
& L_{nm}^{2\mathfrak b}=\frac{(-1)^mc_n^{-1}e^{-\ri m\varphi_t^{2\mathfrak{b}}}}{4\pi }I_{nm}^{2\mathfrak b}(\rho_t^{2\mathfrak b}, z'_{2\mathfrak b}-z_c^t),\\
& T_{nm,n'm'}^{1\mathfrak b}=\frac{(-1)^{n+m}Q_{nm}^{n'm'}e^{\ri(m'-m)\varphi_{ts}^{1\mathfrak b}}}{4\pi}I_{n+n',m'-m}^{1\mathfrak b}(\rho_{ts}^{1\mathfrak b}, z_c^t- z_c^{1\mathfrak b}),\\
& T_{nm,n'm'}^{2\mathfrak b}=\frac{(-1)^{n+m+n'+m'}Q_{nm}^{n'm'}e^{\ri(m'-m)\varphi_{ts}^{2\mathfrak b}}}{4\pi}I_{n+n',m'-m}^{2\mathfrak b}(\rho_{ts}^{2\mathfrak b},  z_c^{2\mathfrak b}-z_c^t),
\end{split}
\end{equation}
where
$$Q_{nm}^{n'm'}:=\sqrt{\frac{(2n'+1)(n+n'+m'-m)!(n+n'-m'+m)!}{(2n+1)(n+m)!(n-m)!(n'+m')!(n'-m')!}}.$$
Therefore, we actually need efficient algorithm for the computation of the Sommerfeld-type integrals $I_{nm}^{\mathfrak{ab}}(\rho, z)$ defined in \eqref{uniformintegral}. It is clearly that they have oscillatory integrands. These integrals are convergent when the target and
source particles are not exactly on the interfaces of the layered medium.
High order quadrature rules could be used for direct
numerical computation at runtime. However, this becomes prohibitively expensive due to a large number of integrals needed in the FMM. In fact, $(p+1)(2p+1)$ integrals will be required for each source box to target box translation. Moreover, the involved integrand decays more slowly as $n$ increases.

An important aspect in the implementation of FMM concerns scaling. Since $M_{nm}^{\mathfrak{ab}}\approx(|\bs r-\bs r_c^{\mathfrak{ab}}|)^n$, $L_{nm}^{\mathfrak{ab}}\approx(|\bs r^{\mathfrak{ab}}-\bs r_c^t|)^{-n}$, a naive use of the expansions \eqref{melayerupgoingimage} and \eqref{lelayerimage} in the implementation of FMM is likely to encounter underflow and overflow issues. To avoid this, one must scale expansions, replacing $M_{nm}$ with $M_{nm}^{\mathfrak{ab}}/S^n$ and $L_{nm}^{\mathfrak{ab}}$ with $L_{nm}^{\mathfrak{ab}}\cdot S^n$ where $S$ is the scaling factor. To compensate for this scaling, we replace $\widetilde{\mathcal F}_{nm}^{\mathfrak{ab}}(\bs r, \bs r_c^{\mathfrak{ab}})$ with $\widetilde{\mathcal F}_{nm}^{\mathfrak{ab}}(\bs r, \bs r_c^{\mathfrak{ab}})\cdot S^n$, $T_{nm,n'm'}^{\mathfrak{ab}}$ with $T_{nm,n'm'}^{\mathfrak{ab}}\cdot S^{n+n'}$.  Usually, the scaling factor $S$ is chosen to be the size of the box in which the computation occurs. Therefore, the following scaled Sommerfeld-type integrals
\begin{equation}\label{scaledsommerfeldint}
S^{n}I_{nm}^{\mathfrak{ab}}(\rho, z)=\int_0^{\infty}J_{m}(k_{\rho}\rho)\frac{(k_{\rho}S)^{n}e^{-k_{\rho}z}\sigma_{\ell\ell'}^{\mathfrak{ab}}(k_{\rho})}{\sqrt{(n+m)!(n-m)!}}{\rm d}k_{\rho},\;\;n\geq 0,\;\;m=0, 1, \cdots, n,
\end{equation}
are involved in the implementation of the FMM.

Recall the recurrence formula
$$J_{m+1}(z)=\frac{2m}{z}J_{m}(z)-J_{m-1}(z),$$
and define $a_n=\sqrt{n(n+1)}$. We have
\begin{equation*}
\begin{split}
S^{n}I_{nm+1}^{\mathfrak{ab}}(\rho, z)=&\int_0^{\infty}J_{m+1}(k_{\rho}\rho)\frac{(k_{\rho}S)^{n}e^{-k_{\rho}z}\sigma_{\ell\ell'}^{\mathfrak{ab}}(k_{\rho})}{\sqrt{(n+m+1)!(n-m-1)!}}{\rm d}k_{\rho}\\
=&\frac{2m S}{\rho }\int_0^{\infty}J_{m}(k_{\rho}\rho)\frac{(k_{\rho}S)^{n-1}e^{-k_{\rho}z}\sigma_{\ell\ell'}^{\mathfrak{ab}}(k_{\rho})}{\sqrt{(n+m-1)!(n-m-1)!}}\sqrt{\frac{(n+m-1)!}{(n+m+1)!}}{\rm d}k_{\rho}\\
-&\int_0^{\infty}J_{m-1}(k_{\rho}\rho)\frac{(k_{\rho}S)^{n}e^{-k_{\rho}z}\sigma_{\ell\ell'}^{\mathfrak{ab}}(k_{\rho})}{\sqrt{(n+m-1)!(n-m+1)!}}\sqrt{\frac{(n+m-1)!(n-m+1)!}{(n+m+1)!(n-m-1)!}}{\rm d}k_{\rho},
\end{split}
\end{equation*}
which gives the forward recurrence formula
\begin{equation}\label{recurrence}
S^{n}I_{nm+1}^{\mathfrak{ab}}(\rho, z)=\frac{2m}{a_{n+m}}\frac{S}{\rho}S^{n-1}I_{n-1m}^{\mathfrak{ab}}(\rho, z)-\frac{a_{n-m}}{a_{n+m}}S^{n}I_{nm-1}^{\mathfrak{ab}}(\rho, z),
\end{equation}
for $m\geq 1, n\geq m+1$. This recurrence formula is stable if
\begin{equation}
\frac{2m}{a_{n+m}}<\frac{\rho}{S}.
\end{equation}

In the computation of $\widetilde{\mathcal F}_{nm}^{\mathfrak{ab}}(\bs r, \bs r_c^{\mathfrak{ab}})\cdot S^n$ and $L_{nm}^{\mathfrak{ab}}\cdot S^n$,  $\rho^{\mathfrak{ab}}_s$ and $\rho_t^{\mathfrak{ab}}$ could be arbitrary small. Therefore, the forward recurrence formula \eqref{recurrence} may not be able to applied to calculate them. Nevertheless, it is unnecessary to calculate $\widetilde{\mathcal F}_{nm}^{\mathfrak{ab}}(\bs r, \bs r_c^{\mathfrak{ab}})\cdot S^n$ and $L_{nm}^{\mathfrak{ab}}\cdot S^n$ directly in the FMM. The coefficients $L_{nm}^{\mathfrak{ab}}\cdot S^n$ are calculated from ME coefficients via M2L translations and then the potentials are obtained via LEs \eqref{lelayerimage}. Therefore, we only need to consider the computation of the integrals involved in the M2L translation matrices $T_{nm,n'm'}^{\mathfrak{ab}}$.
For any polarization source box in the interaction list of a given target box, one can find that $\rho_{ts}^{\mathfrak{ab}}$ is either $0$ or larger than the box size $S$.  If $\rho_{ts}^{\mathfrak{ab}}=0$, we directly have
\begin{equation}
I_{nm}^{\mathfrak{ab}}(\rho_{ts}^{\mathfrak {ab}},z)=0, \quad \forall m>0,\;\;\forall z>0.
\end{equation}
In all other cases, we have $\rho_{ts}^{\mathfrak{ab}}\geq S$ and the forward recurrence formula \eqref{recurrence} can always be applied as we have
$$\frac{2m}{\sqrt{(n+m+1)(n+m)}}<\frac{1}{\sqrt{3}}<\frac{\rho_{ts}^{\mathfrak{ab}}}{S},\quad n\geq m+1,\quad m\geq 1.$$

{\color{black}
Given a truncation number $p$, we still need to use quadratures to calculate $4p+1$ initial values $ \{I_{n 0}^{\mathfrak{ab}}(\rho, z)\}_{n=0}^{2p}$ and $\{ I_{n 1}^{\mathfrak{ab}}(\rho, z)\}_{n=1}^{2p}$ for each M2L translation. Moreover, integrals $\{I_{00}^{\mathfrak{ab}}(\rho, z)\}_{\mathfrak{a,b}=1}^2$ are also required in the computation of the direct interactions between particles in neighboring boxes. These calculations require an efficient and robust numerical method. Note that $\{I_{00}^{\mathfrak{ab}}(\rho, z)\}_{\mathfrak{a,b}=1}^2$ are exactly the Sommerfeld integrals involved in the calculation of the layered Green's function. A multitude of papers have been published until now, devoted to their efficient calculation (see \cite{michalski2016efficient} and the references there in).

Basically, we will adopt the mixed DE-SE quadrature (cf.  \cite{michalski2016efficient,takahasi1974double}) in this paper for efficient computations of the Sommerfeld-type integrals. Nevertheless, we still need to consider the case of large $n$ which has not been covered in the literature. We have found that the formulation \eqref{scaledsommerfeldint} is not adequate for two reasons: (i) the integrand may decay very slowly if $z$ is small; (ii) the integrand may have increasing oscillating magnitude as $n$ increases if $\rho>z$. As a matter of fact, the asymptotic formula \eqref{densityasymptotic} and
\begin{equation*}
J_m(z)\sim \sqrt{\frac{2}{\pi z}}\cos\Big(z-\frac{m\pi}{2}-\frac{\pi}{4}\Big),\quad z\rightarrow\infty,
\end{equation*}
imply that the integrand in \eqref{scaledsommerfeldint} has an asymptotic form
\begin{equation}
J_{m}(k_{\rho}\rho)\frac{(k_{\rho}S)^{n}e^{-k_{\rho}z}\sigma_{\ell\ell'}^{\mathfrak{ab}}(k_{\rho})}{\sqrt{(n+m)!(n-m)!}}\sim \sqrt{\frac{2}{\pi}} C_{\ell\ell'}^{\mathfrak{ab}}\cos\Big(k_{\rho}\rho-\frac{m\pi}{2}-\frac{\pi}{4}\Big)\frac{(k_{\rho}\rho)^{n-\frac{1}{2}}S^ne^{-k_{\rho}(z+\zeta_{\ell\ell'}^{\mathfrak{ab}})}}{\sqrt{(n+m)!(n-m)!}},
\end{equation}
as $k_{\rho}\rightarrow\infty$. Given $\rho, z> 0$, define
\begin{equation}\label{gnmdefinition}
g_{nm}(k_{\rho}; \rho, z+\zeta_{\ell\ell'}^{\mathfrak{ab}})=\frac{(k_{\rho}\rho)^{n-\frac{1}{2}}S^ne^{-k_{\rho}(z+\zeta_{\ell\ell'}^{\mathfrak{ab}})}}{\sqrt{(n+m)!(n-m)!}},
\end{equation}
which has a maximum value
\begin{equation}
\max\limits_{k_{\rho}\geq 0}g_{nm}(k_{\rho}; \rho, z+\zeta_{\ell\ell'}^{\mathfrak{ab}})=\frac{S^n}{\sqrt{(n+m)!(n-m)!}}\Big(\frac{2n-1}{2}\Big)^{n-\frac{1}{2}}\Big(\frac{\rho}{z+\zeta_{\ell\ell'}^{\mathfrak{ab}}}\Big)^{n-\frac{1}{2}}e^{\frac{1}{2}-n},
\end{equation}
at $k_{\rho}=\frac{n}{z+\zeta_{\ell\ell'}^{\mathfrak{ab}}}-\frac{1}{2(z+\zeta_{\ell\ell'}^{\mathfrak{ab}})}$ for $n\geq 1$. Applying Stirling formula $n!\sim \sqrt{2\pi n}n^n/e^n$ yields
\begin{equation}\label{miximumvalue}
\max\limits_{k_{\rho}\geq 0}g_{nm}(k_{\rho}; \rho, z+\zeta_{\ell\ell'}^{\mathfrak{ab}})\sim \sqrt{\frac{(2n-1)e}{2}}\frac{n!}{\sqrt{(n+m)!(n-m)!}}\Big(\frac{\rho}{z+\zeta_{\ell\ell'}^{\mathfrak{ab}}}\Big)^{n-\frac{1}{2}}S^n.
\end{equation}
Considering the case $m=0$ and setting $S=\sqrt{\rho^2+z^2}$, we have
\begin{equation}
\begin{split}
\max\limits_{k_{\rho}\geq 0}g_{n0}(k_{\rho}, \rho, z+\zeta_{\ell\ell'}^{\mathfrak{ab}})&\sim \sqrt{\frac{(2n-1)(z+\zeta_{\ell\ell'}^{\mathfrak{ab}})e}{2\rho}}\Big(\frac{\rho S}{z+\zeta_{\ell\ell'}^{\mathfrak{ab}}}\Big)^{n}\\
&\geq\sqrt{\frac{(2n-1)(z+\zeta_{\ell\ell'}^{\mathfrak{ab}})e}{2\rho}}\Big(\frac{\rho^2}{z+\zeta_{\ell\ell'}^{\mathfrak{ab}}}\Big)^{n}, \quad {\rm if}\;\;\rho>z+\zeta_{\ell\ell'}^{\mathfrak{ab}}.
\end{split}
\end{equation}
From the above estimate, we can see that the formulation \eqref{scaledsommerfeldint} have very large cancellations in the integrand if $\rho/(z+\zeta_{\ell\ell'}^{\mathfrak{ab}})$ and $n$ are large, see Fig. \ref{integrand} (a) for an example. Therefore, simply applying a quadrature along the real axis will not be efficient.
\begin{figure}[ptbh]
	\center
	\subfigure[along real axis]{\includegraphics[scale=0.37]{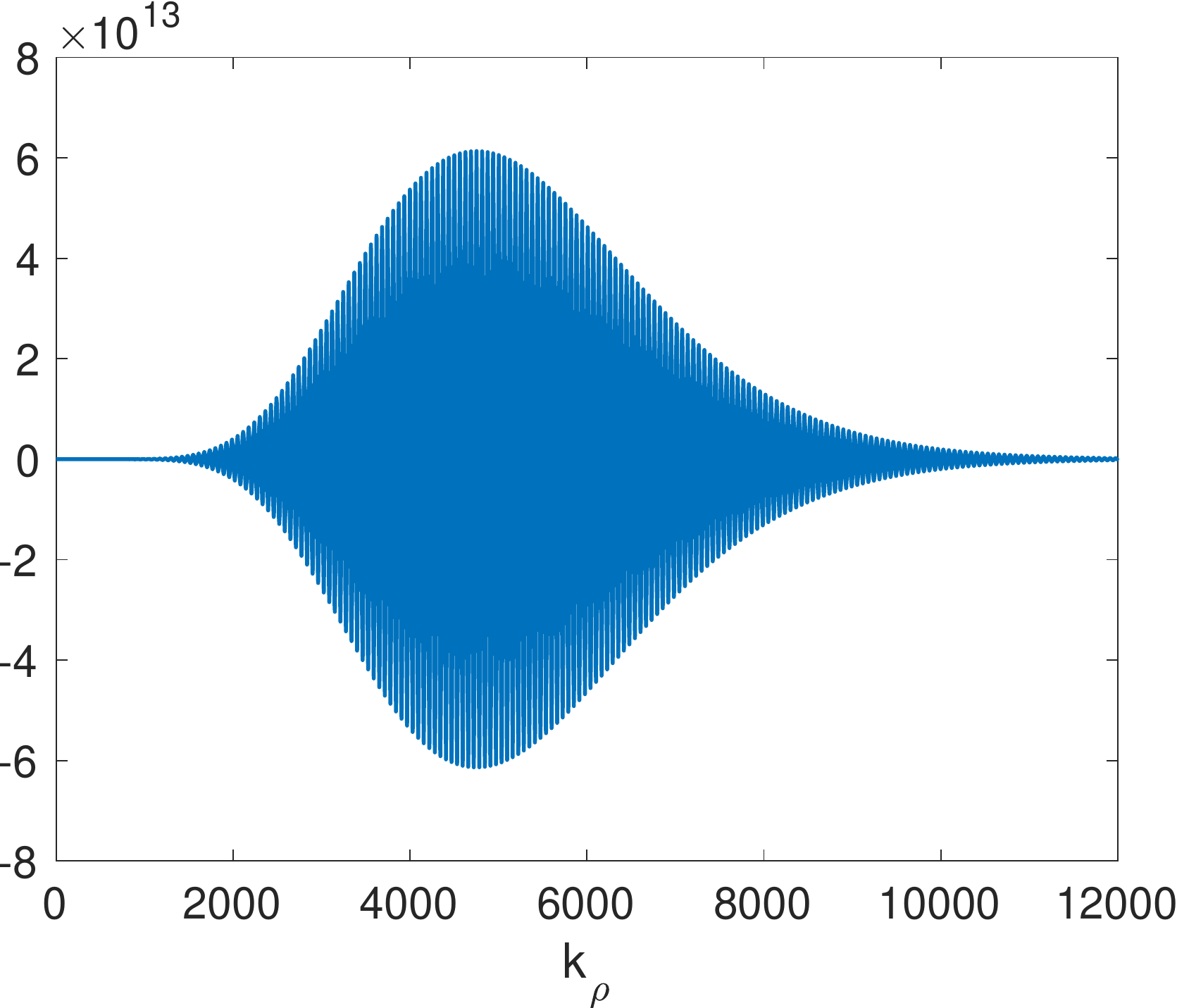}}\quad
	\subfigure[along imaginary axis]{\includegraphics[scale=0.37]{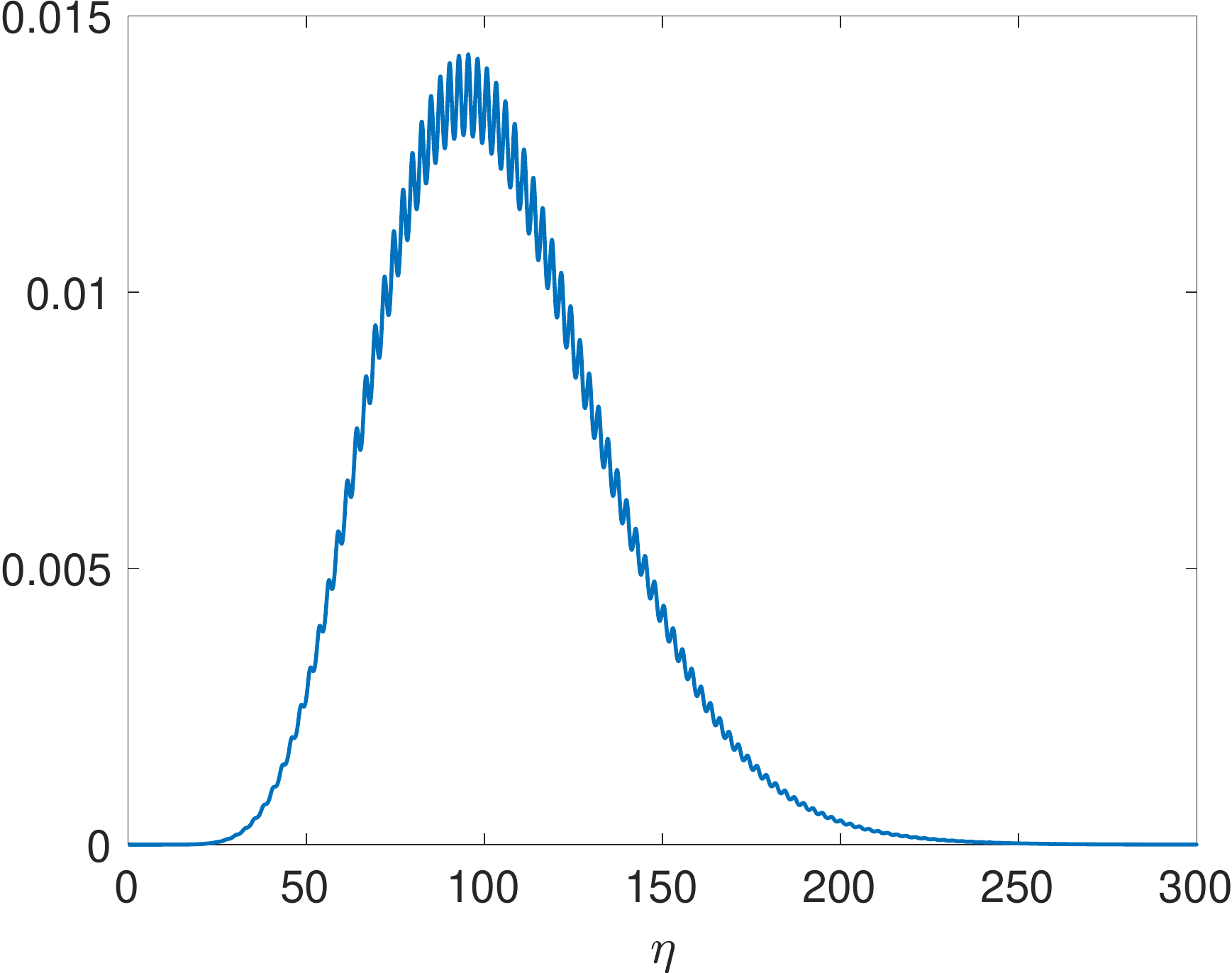}}
	\caption{A comparison of the integrands in \eqref{scaledsommerfeldint} and \eqref{contourchange} with $n=10, m=0$, $\rho=0.1$, $z=0.002$ and $\sigma_{11}^{11}(k_{\rho})$ given in \eqref{densitythreelayer2} ( $d_0=0$, $d_1=-1.2$, $\varepsilon_0=21.2$, $\varepsilon_1=47.5$, $\varepsilon_2=62.8$).}%
	\label{integrand}%
\end{figure}

To handle the case $\rho>(z+\zeta_{\ell\ell'}^{\mathfrak{ab}})$, we change the contour to the imaginary axis as follows. We first reformulate the integral \eqref{scaledsommerfeldint} as
\begin{equation}
\begin{split}
S^nI_{nm}^{\mathfrak{ab}}(\rho, z)=&\frac{1}{2}\int_{0}^{\infty}H_{m}^{(1)}(k_{\rho}\rho)\frac{(Sk_{\rho})^{n}e^{-k_{\rho}z}}{\sqrt{(n+m)!(n-m)!}}\sigma_{\ell\ell'}^{\mathfrak{ab}}(k_{\rho}) dk_{\rho}\\
+&\frac{(-1)^{m+1}}{2}\int_{-\infty}^{0}H_{m}^{(1)}(k_{\rho}\rho)\frac{(-Sk_{\rho})^{n}e^{k_{\rho}z}}{\sqrt{(n+m)!(n-m)!}}\sigma_{\ell\ell'}^{\mathfrak{ab}}(-k_{\rho}) dk_{\rho},
\end{split}
\end{equation}
by using identities
\begin{equation}
J_m(z)=\frac{H_m^{(1)}(z)+H_m^{(2)}(z)}{2},\quad H_m^{(2)}(-x)=(-1)^{m+1}H_m^{(1)}(x).
\end{equation}
As the density function $\sigma_{\ell\ell'}^{\mathfrak{ab}}(k_{\rho})$ is analytic in the right half complex plane, we can change the contour from the real axis to the one which wraps the positive imaginary axis to obtain
\begin{equation}\label{uniformintegralalongimax}
\begin{split}
S^nI_{nm}^{\mathfrak{ab}}(\rho, z)=&\frac{\ri}{2}\int_{0}^{\infty}H_{m}^{(1)}(\ri \eta\rho)\frac{(\ri\eta S)^{n}e^{-\ri\eta z}}{\sqrt{(n+m)!(n-m)!}}\sigma_{\ell\ell'}^{\mathfrak{ab}}(\ri\eta) d\eta\\
-&\frac{\ri}{2}\int_{0}^{\infty}H_{m}^{(1)}(\ri\eta\rho)\frac{(-1)^{m+1}(-\ri\eta S)^{n}e^{\ri\eta z}}{\sqrt{(n+m)!(n-m)!}}\sigma_{\ell\ell'}^{\mathfrak{ab}}(-\ri\eta) d\eta.
\end{split}
\end{equation}
Then, a substitution of the identity (cf. \cite[Eq. (10.27.8)]{Olver2010})
\begin{equation}
H_m^{(1)}(\ri z)=\frac{2\ri^{-m-1}}{\pi}K_m(z),\quad -\pi\leq\arg z\leq\frac{\pi}{2},
\end{equation}
into \eqref{uniformintegralalongimax} gives
\begin{equation}\label{contourchange}
S^nI_{nm}^{\mathfrak{ab}}(\rho, z)=\frac{\ri^{n-m}}{\pi}\int_{0}^{\infty}K_m(\eta\rho)(S\eta)^n\frac{e^{-\ri\eta z}\sigma_{\ell\ell'}^{\mathfrak{ab}}(\ri\eta)+(-1)^{n+m}e^{\ri\eta z}\sigma_{\ell\ell'}^{\mathfrak{ab}}(-\ri\eta)}{\sqrt{(n+m)!(n-m)!}} d\eta.
\end{equation}
According to the expressions given in \eqref{densitythreelayer1}-\eqref{densitythreelayer3}, all decaying terms in $\sigma_{\ell\ell'}^{\mathfrak{ab}}(k_{\rho})$ become bounded oscillating terms in $\sigma_{\ell\ell'}^{\mathfrak{ab}}(\pm \ri\eta)$.  By the asymptotic formulation \cite[Eq. (10.25.3)]{Olver2010}:
\begin{equation*}
K_m(z)\sim \sqrt{\frac{\pi}{2z}}e^{-z},\quad z\rightarrow\infty, \quad |\arg z|<\frac{3\pi}{2},
\end{equation*}
and the definition of $g_{nm}(k_{\rho}; \rho,z)$ in \eqref{gnmdefinition}, the main part of the integrand has an asymptotic expression
\begin{equation}
\frac{K_m(\eta\rho)(S\eta)^n}{\sqrt{(n+m)!(n-m)!}}\sim \sqrt{\frac{\pi}{2\eta\rho}}\frac{(S\eta)^ne^{-\eta\rho}}{\sqrt{(n+m)!(n-m)!}}= \sqrt{\frac{\pi}{2\rho}}g_{nm}(\eta; 1, \rho),\quad\eta\rightarrow\infty.
\end{equation}
Recalling \eqref{miximumvalue} to get
\begin{equation}
\max\limits_{\eta\geq 0}g_{nm}(\eta; 1, \rho)\sim\sqrt{\frac{(2n-1)e}{2}}\frac{n!}{\sqrt{(n+m)!(n-m)!}}\Big(\frac{1}{\rho}\Big)^{n-\frac{1}{2}}S^n,\quad \eta\rightarrow\infty.
\end{equation}
As an example, we consider the case $m=0$ and set $S=\sqrt{\rho^2+z^2}$ again, i.e.,
\begin{equation}
\max\limits_{\eta\geq 0}g_{n0}(\eta; 1, \rho)\sim\sqrt{\frac{(2n-1)e}{2}}\Big(\frac{1}{\rho}\Big)^{n-\frac{1}{2}}S^n= \sqrt{\frac{(2n-1)e}{2}}\Big(1+\frac{z^2}{\rho^2}\Big)^{\frac{n}{2}}\sqrt{\rho}.
\end{equation}
Apparently, the large cancellation in the case $\rho>z+\zeta_{\ell\ell'}^{\mathfrak{ab}}$ can be significantly suppressed by using the formulation \eqref{contourchange}. At the same time, the oscillating term $J_m(k_{\rho}\rho)$ is turned to be exponential decaying function $K_m(\eta\rho)$ and thus produce much fast decay when $\rho/(z+\zeta_{\ell\ell'}^{\mathfrak{ab}})$ is large.  A comparison of the integrands along real and imaginary axises is plotted in Fig. \ref{integrand}.

To end this section, we will give some numerical results to show the accuracy and efficiency of the algorithm using mixed DE-SE quadrature together with formulations \eqref{scaledsommerfeldint} and \eqref{contourchange} for the computation of the Sommerfeld type integrals. We test the integral with densities $\sigma_{\ell\ell'}^{\mathfrak{ab}}(k_{\rho})\equiv 1$ as the asymptotic formula \eqref{densityasymptotic} implies that $\sigma_{\ell\ell'}^{\mathfrak{ab}}(k_{\rho})$ tends to be either the constant $C_{\ell\ell'}^{\mathfrak{ab}}$ or $0$ rapidly as $k_{\rho}\rightarrow\infty$. Letting $S=r:=\sqrt{\rho^2+z^2}$, then the identity \eqref{wavefunspectralform} yields
\begin{equation}
r^nI_{nm}^{\mathfrak{ab}}(\rho, z)=\sqrt{\frac{4\pi}{2n+1}}\frac{1}{r}\widehat{P}_{n}^m\Big(\frac{z}{r}\Big).
\end{equation}
We fix $z=0.001$ and test $\rho=0.0005, 0.01, 0.1$ by using two different quadratures: (i) the composite Gaussian quadrature applied to the integral \eqref{scaledsommerfeldint}; (ii) the mixed DE-SE quadrature applied to \eqref{scaledsommerfeldint} and \eqref{contourchange} for $\rho\leq z$ and $\rho>z$, respectively. For the composite
\begin{table}[ht!]
\color{black}{
	\centering {\small
		\begin{tabular}
			[c]{|c|c|c|c|c|c|c|}\hline
			\multirow{2}{*}{$\rho$} & \multirow{2}{*}{$n$}& \multirow{2}{*}{$m$} & \multicolumn{2}{c|}{Composite Gauss} & \multicolumn{2}{c|}{Mixed DE-SE} \\\cline{4-7}
			& & &  number of points & error & number of points & error \\\hline
			\multirow{4}{*}{0.0005}&\multirow{2}{*}{5} &
			0 & 717523 & -3.307e-12 & 80 & 3.819e-14\\\cline{3-7}
			& &1 & 716016 & 2.576e-11 &
			80 & 5.684e-14\\\cline{2-7}
			& \multirow{2}{*}{10} &0 & 892278 & 6.954e-12 & 72 & -2.842e-14\\\cline{3-7}
			& &1 & 891431 & 1.882e-11 & 72 & -9.059e-14\\\hline
			\multirow{4}{*}{0.01}&\multirow{2}{*}{5} &
			0 & 872989 & -1.427e-10 & 56 & 4.441e-16\\\cline{3-7}
			& &1 & 871511 &-2.716e-11  &
			64 & 3.108e-15\\\cline{2-7}
			& \multirow{2}{*}{10} &0 & 1246898 & 1.147e-5 &
			56 & -1.443e-15\\\cline{3-7}
			& &1 & 1246090 & -6.755e-6 &
			56 & 6.883e-15\\\hline
			\multirow{4}{*}{0.1}&\multirow{2}{*}{5} &
			0 & 1039851 & -8.793e-7 & 48 & -3.078e-12\\\cline{3-7}
			& &1 & 1038393 & -9.250e-7 &
			56 & 4.852e-11\\\cline{2-7}
			& \multirow{2}{*}{10} &0 & 1610764  & -10615.95 &
			48 & 1.943e-16\\\cline{3-7}
			& &1 & 1609974 & 1334.402 & 48 &2.775e-17\\\hline
		\end{tabular}
	}\caption{A comparison of two quadrature rules for the computation of Sommerfeld integrals with $z=0.001$.}%
	\label{Table:gpofapproximationrp}%
}
\end{table}
Gaussian quadrature, the asymptotic formula \eqref{gnmdefinition} is used to determine the truncation points such that the magnitude of the integrand decays to smaller than $1.0e-15$. Then, a uniform mesh with mesh size equal to $2$ and $30$ Gauss points in each interval is used to achieve machine accuracy in regular case. Due to the small value of $z$, a very large truncation is needed if the formulation \eqref{scaledsommerfeldint} is used. The results are compared in Table. \ref{Table:gpofapproximationrp}. We can see that the truncation is larger than 47834 in the case $\rho=0.0005$, $n=5$ and $m=0,1$. The truncation in all other tested cases is even larger. Thus, a large number of quadrature points have been used to achieve good accuracy if the composite Gauss quadrature is applied to \eqref{scaledsommerfeldint}. In contrast, the mixed DE-SE quadrature can obtain machine accuracy using no more than 100 points. Moreover, as the ratio $\rho/z$ increases, applying composite Gauss quadrature to \eqref{scaledsommerfeldint} can not give correct values due to the large cancellation in \eqref{scaledsommerfeldint}. Instead, the mixed DE-SE quadrature applied to \eqref{contourchange} can provide results with machine accuracy using a few quadrature points.}

\begin{rem}
	Apparently, the technique of using pre-computed tables together with polynomial interpolation can still be applied for efficient computation of the initial values $ \{I_{n 0}^{\mathfrak{ab}}(\rho, z)\}_{n=0}^{2p}$ and $\{ I_{n 1}^{\mathfrak{ab}}(\rho, z)\}_{n=1}^{2p}$ at run time. Then, $4p+1$ tables need to be pre-computed on the 2-D grid in a domain of interest. Efficient improvement by using pre-computed tables is validated by some numerical tests in next section.
\end{rem}



\section{Numerical results}

In this section, we present numerical results to demonstrate the performance
of the proposed FMM. The algorithm is implemented based on an open-source adaptive FMM package DASHMM (cf. \cite{debuhr2016dashmm}) on a workstation with two Xeon E5-2699 v4 2.2 GHz processors (each has 22 cores) and 500GB RAM using the gcc compiler version 6.3.

We test problems in a three layers medium with interfaces placed at $z_{0}=0$, $z_{1}=-1.2$. Charges
are set to be uniformly distributed in irregular domains which are obtained by shifting the domain determined by $r=0.5-a+\frac{a}{8}(35\cos^{4}\theta-30\cos^{2}\theta+3)$
with $a=0.1,0.15,0.05$ to new centers $(0,0,0.6)$, $(0,0,-0.6)$
and $(0,0,-1.8)$, respectively (see
Fig. \ref{fmmconvergence} (a) for the cross section of the domains). All particles are generated by keeping the
uniform distributed charges in a larger cube within corresponding irregular
domains. In the layered medium, the material parameters are set to be $\varepsilon_0=21.2$, $\varepsilon_1=47.5$, $\varepsilon_2=62.8$. Let $\widetilde{\Phi}_{\ell}%
(\boldsymbol{r}_{\ell i})$ be the approximated values of $\Phi_{\ell
}(\boldsymbol{r}_{\ell i})$ calculated by FMM. Define $\ell^{2}$ and maximum errors as
\begin{equation}
Err_{2}^{\ell}:=\sqrt{\frac{\sum\limits_{i=1}^{N_{\ell}}|\Phi_{\ell
		}(\boldsymbol{r}_{\ell i})-\widetilde{\Phi}_{\ell}(\boldsymbol{r}_{\ell
			i})|^{2}}{\sum\limits_{i=1}^{N_{\ell}}|\Phi_{\ell}(\boldsymbol{r}_{\ell
			i})|^{2}}},\qquad Err_{max}^{\ell}:=\max\limits_{1\leq i\leq{N_{\ell}}}%
\frac{|\Phi_{\ell}(\boldsymbol{r}_{\ell i})-\widetilde{\Phi}_{\ell
	}(\boldsymbol{r}_{\ell i})|}{|\Phi_{\ell}(\boldsymbol{r}_{\ell i})|}.
\end{equation}
\begin{figure}[ptbh]
	\center
	\subfigure[distribution of charges]{\includegraphics[scale=0.335]{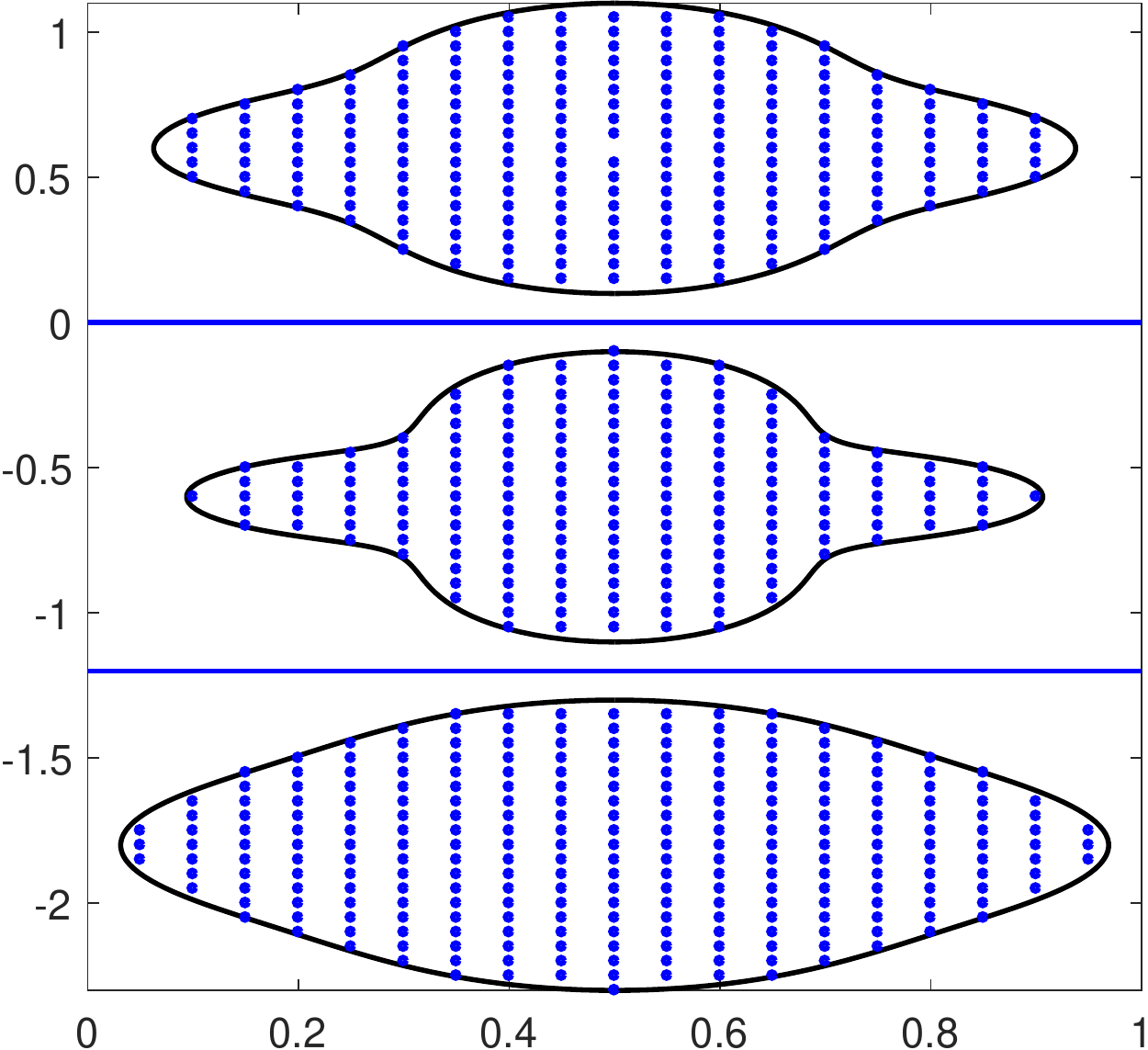}}
	\subfigure[convergence rates vs. $p$]{\includegraphics[scale=0.29]{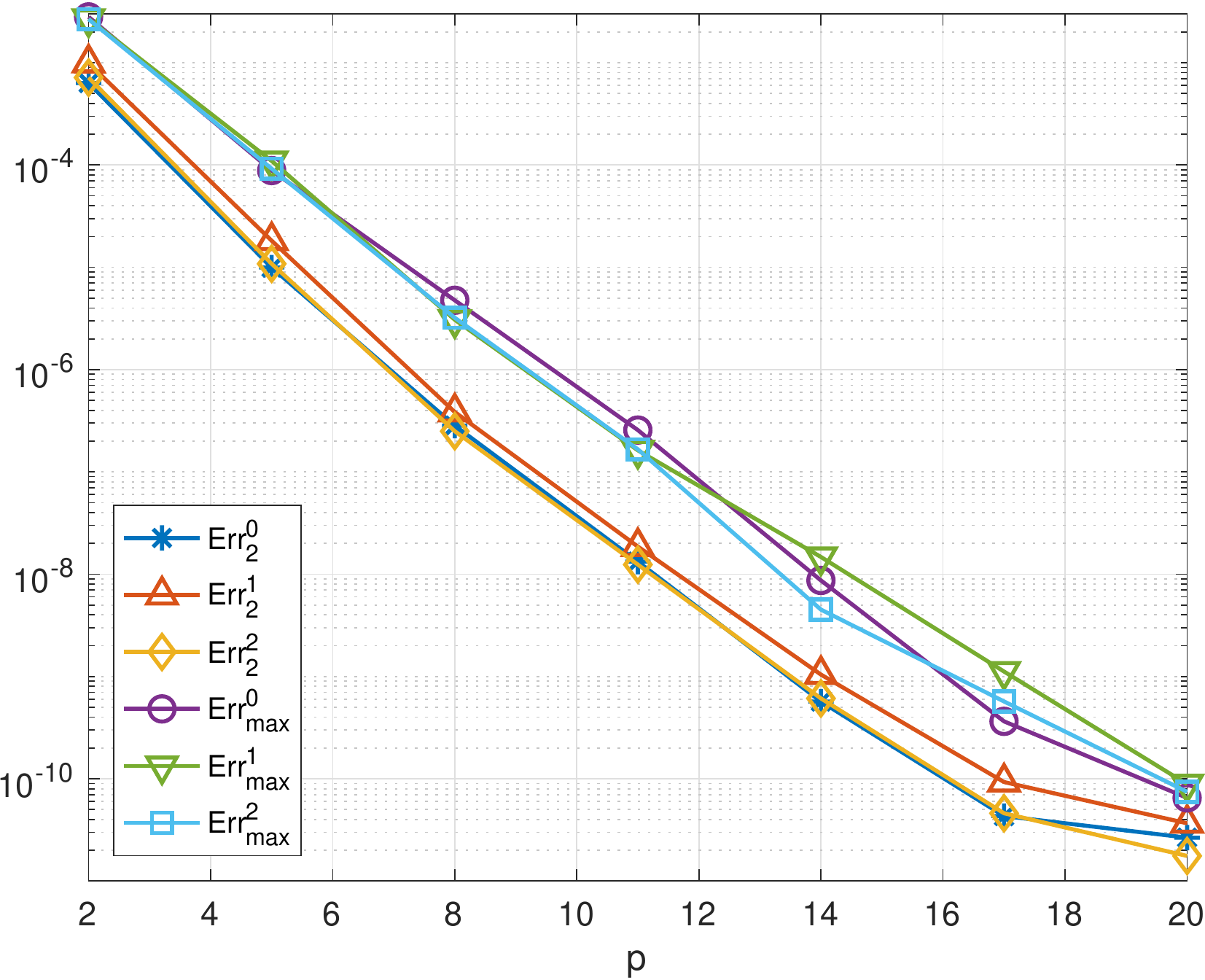}}
	\subfigure[ CPU time (sec) vs. $N$]{\includegraphics[scale=0.25]{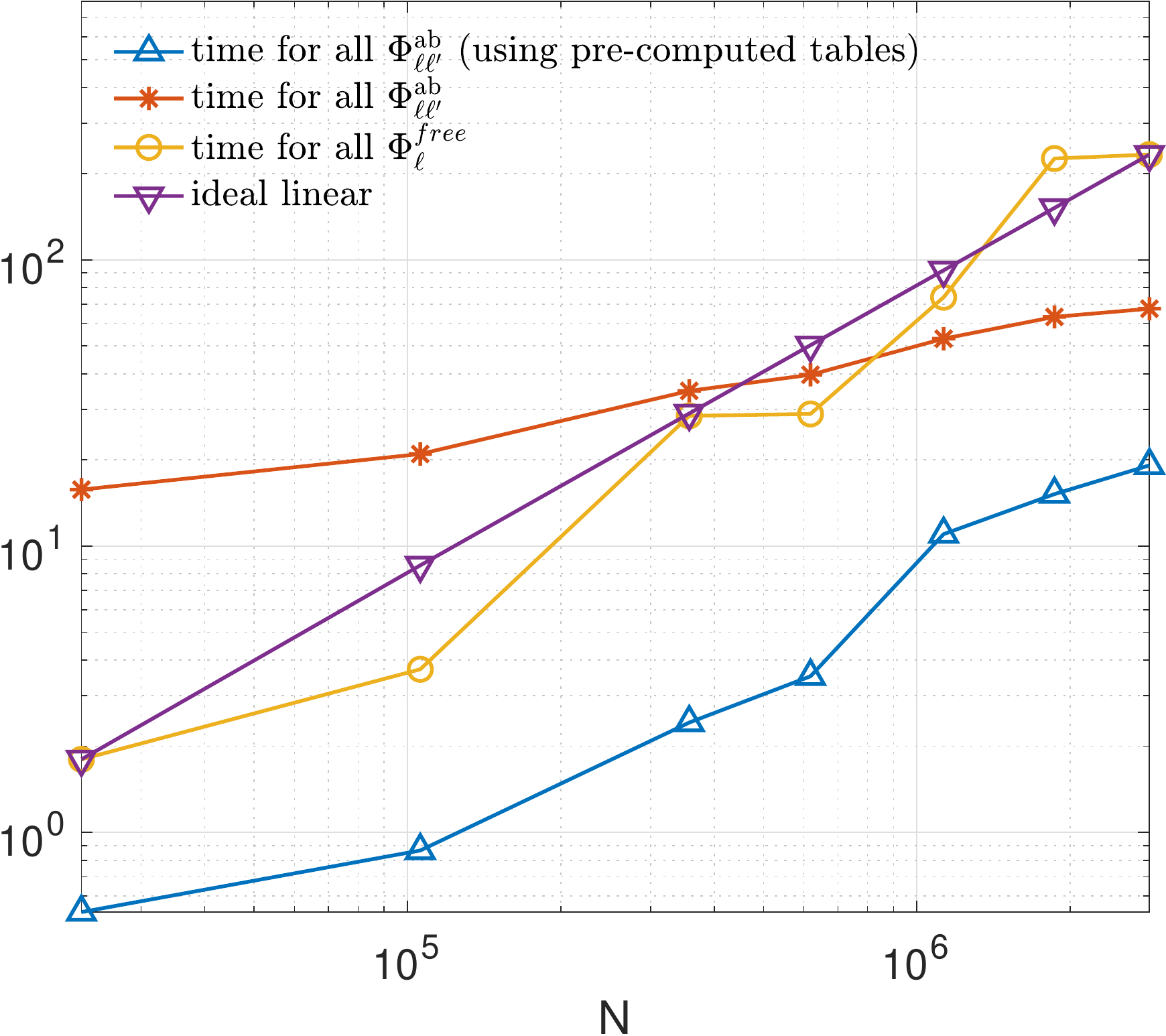}}
	\caption{Performance of FMM for problem in a three layers medium.}%
	\label{fmmconvergence}%
\end{figure}
For accuracy test, we put
$N=912+640+1296$ charges in the irregular domains in three layers, see Fig. \ref{fmmconvergence} (a). Convergence rates against $p$ are depicted in Fig. \ref{fmmconvergence} (b). Clearly, the proposed FMM has spectral convergence with respect to truncation number $p$.
The CPU time for the computation of all three free space components $\{\Phi^{free}_{\ell}(\boldsymbol{r}_{\ell i})\}_{\ell=0}^2$ and sixteen reaction components $\Phi^{\mathfrak{ab}}_{\ell\ell'}(\boldsymbol{r}_{\ell i})$ with fixed truncation number $p=5$ are compared in Fig. \ref{fmmconvergence} (c) for up to 3 millions charges. It shows that all of them have an $O(N)$ complexity while the CPU time for the computation of reaction components has a much smaller linear scaling constant due to the fact that most of the equivalent polarization sources are well-separated with the targets.
CPU time with multiple cores is given in Table \ref{Table:exthree} and they show that the speedup of the parallel computing for reaction components is little bit lower than that for the free space components.  Here, we only use parallel implementation within the computation of each component. Note that the computation of each component is independent of all other components. Therefore, it is straightforward to implement a version of the code which computes all components in parallel.

\begin{table}[ptbhptbhptbhptbh]
	\centering {\small
		\begin{tabular}
			[c]{|c|c|c|c|c|}\hline
			\multirow{2}{*}{cores} & \multirow{2}{*}{$N$} & \multirow{2}{*}{time for all $\{\Phi_{\ell}^{free}\}_{\ell=0}^2$} & \multicolumn{2}{c|}{time for all $\{\Phi^{\mathfrak{ab}}_{\ell\ell'}\}$ }\\\cline{4-5}
			& & & 	not use pre-computed tables	& use pre-computed tables	\\\hline
			\multirow{4}{*}{1} &618256 & 28.89 & 39.61  & 3.51 \\\cline{2-5}
			& 1128556 & 73.16 & 54.86 & 11.01  \\\cline{2-5}
			& 1862568 & 223.15 & 63.15 & 15.19 \\\cline{2-5}
			& 2861288 & 237.45 & 69.70 & 19.14 \\\hline
			\multirow{4}{*}{6} & 618256 & 5.57 & 8.13 & 1.22\\\cline{2-5}
			& 1128556 & 13.92 &11.31  &3.53 \\\cline{2-5}
			& 1862568 & 42.07 & 13.81 & 5.18 \\\cline{2-5}
			& 2861288 & 45.06 & 15.42 & 6.33 \\\hline
			\multirow{4}{*}{36} & 618256 & 1.52 & 3.67  & 1.21\\\cline{2-5}
			& 1128556 & 3.52 & 5.56  & 2.60\\\cline{2-5}
			& 1862568 & 10.59 & 7.86  & 3.57\\\cline{2-5}
			& 2861288 & 11.22 & 9.63  & 4.85 \\\hline
		\end{tabular}
	}
	\caption{Comparison of CPU time (sec) using multiple cores ($p=5$).}%
	\label{Table:exthree}%
\end{table}
%

\section{Conclusion}

In this paper, we have presented a fast multipole
method for the efficient calculation of the interactions between charged particles embedded in 3-D layered media. The layered media Green's function of the Laplace equation is decomposed into a free space and four types of reaction components.
The associated equivalent polarization sources are introduced to re-express the reaction components. New MEs and LEs of $O(p^{2})$ terms for the far field of the reaction components  and M2L translation operators are derived, accordingly. As a result, the traditional FMM framework can be applied to both the free space and reaction components once the polarization sources are used together with the original sources.
The computational cost from the reaction component is only a fraction of that of the FMM for the free space component if a sufficient large number of charges are presented in the problem.  Therefore, computing the interactions of many sources in layered media basically costs the same as that for the interactions in the free space.

For the future work, we will carry out error estimate of the FMM for the Laplace equation in 3-D
layered media, which requires an error analysis for the new MEs and M2L operators for the reaction components. The application of the FMM in capacitance extraction of interconnects in VLSI will also be considered in a future work.

{\color{black}
\begin{appendix}
\section{A stable recursive algorithm for computing reaction densities} \label{spherehar}
\renewcommand{\theequation}{A.\arabic{equation}}
\setcounter{equation}{0}
Denote the solution of the problem \eqref{Laplaceeqlayer}-\eqref{transmissioncond} in the $\ell$-th layer by $u_{\ell\ell'}(\bs r, \bs r')$ and its partial Fourier transform along $x-$ and $y-$directions by
\[
\widehat{u}_{\ell\ell^{\prime}}(k_{x},k_{y}%
,z)=\mathscr{F}[u_{\ell\ell'}(\bs r, \bs r')](k_{x},k_{y},z):=\int_{-\infty
}^{\infty}\int_{-\infty}^{\infty}u_{\ell\ell^{\prime}}%
(\boldsymbol{r},\boldsymbol{r}^{\prime})e^{-\ri(k_{x}x+k_{y}y)}dxdy.
\]
Then, $\widehat{u}_{\ell\ell^{\prime}}(k_{x},k_{y},z)$ satisfies second order
ordinary differential equation
\begin{equation}
\frac{d^2\widehat u_{\ell\ell'}(k_x,k_y, z)}{dz^2}-k_{\rho}^2\widehat u_{\ell\ell'}(k_x,k_y, z)=-e^{-\ri(k_xx'+k_yy')}\delta(z, z'), \quad z\neq d_{\ell}.\label{freqdomhelm}
\end{equation}
Here, we consider the following general interface conditions
\begin{equation}
\label{generalinterfacecond}
a_{\ell-1}\widehat{u}_{\ell-1,\ell^{\prime}}(k_{x},k_{y},z)=a_{\ell}\widehat{u}_{\ell
	\ell^{\prime}}(k_{x},k_{y},z),\quad b_{\ell-1}\frac{d\widehat{u}_{\ell
		-1,\ell^{\prime}}(k_{x},k_{y},z)}{dz}=b_{\ell}\frac{d\widehat{u}_{\ell
		\ell^{\prime}}(k_{x},k_{y},z)}{dz},
\end{equation}
in the frequency domain for $\ell=1, 2, \cdots, L$, where $\{a_{\ell}, b_{\ell}\}$ are given constants. Apparently, the classic transmission condition \eqref{transmissioncond} will lead to a special case of \eqref{generalinterfacecond} with $a_{\ell}=1$, $b_{\ell}=\varepsilon_{\ell}$. In the top and bottom-most layers, we also have decaying condition
\begin{equation}
\widehat u_{0\ell'}(k_x,k_y, z)\rightarrow 0,\quad \widehat u_{L\ell'}(k_x,k_y, z)\rightarrow 0, \quad{\rm as}\;\; z\rightarrow\pm\infty.
\end{equation}
This interface problem has a general solution
\begin{equation}%
\begin{cases}
\displaystyle\widehat{u}_{0\ell^{\prime}}(k_{x},k_{y},z)=\sigma^{1}_{0
	\ell^{\prime}}e^{-k_{\rho}(z-d_0)},\\[10pt]%
\displaystyle\widehat{u}_{\ell^{\prime}\ell^{\prime}}(k_{x},k_{y}%
,z)=\sigma^{1}_{\ell^{\prime}\ell^{\prime}}e^{-k_{\rho}(z-d_{\ell'})}+\sigma^{2}_{\ell^{\prime}\ell^{\prime}}e^{-k_{\rho}(d_{\ell'-1}-z)}+\delta_{\ell\ell'}\widehat{G}(k_x, k_y, z, z^{\prime}),\\[10pt]%
\displaystyle\widehat{u}_{L\ell^{\prime}}(k_{x},k_{y},z)=\sigma^{2}_{L\ell^{\prime}}e^{-k_{\rho}(d_{L-1}-z)},
\end{cases}
\label{gensolutionformulafreq}%
\end{equation}
where $\delta_{\ell\ell'}$ is the kronecker symbol, and
\begin{equation}
\widehat{G}(k_x, k_y, z, z^{\prime})=\vartheta e^{-k_{\rho}|z-z^{\prime}|},\quad \vartheta=\frac{ e^{-\ri(k_{x}x^{\prime}+k_{y}y^{\prime})}}{2k_{\rho}},
\end{equation}
is the Fourier transform of the free space Green's function. We will use the decomposition
\begin{equation}
\widehat{G}(k_x, k_y, z, z^{\prime})=\widehat{G}^{1}(k_x, k_y, z, z^{\prime})+\widehat{G}^{2}(k_x, k_y, z, z^{\prime}),
\end{equation}
where the two components are defined as
\begin{equation}\label{freegreenfreq}
\widehat{G}^{1}(k_x, k_y, z, z^{\prime}):=H(z'-z)\vartheta e^{-k_{\rho}(z^{\prime}-z)},\quad\widehat{G}^{2}(k_x, k_y, z, z^{\prime}):=H(z-z')\vartheta e^{-k_{\rho}(z-z^{\prime})},
\end{equation}
and $H(x)$ is the Heaviside function.

We first consider the $\ell$-th layer without source ($\ell\neq\ell'$), where the right hand side of \eqref{freqdomhelm} becomes zero, the solution is given by
\begin{equation}
\begin{split}
\widehat u_{\ell\ell'}&(k_x,k_y, z)=\sigma^{1}_{\ell\ell^{\prime}}(k_x, k_y)e^{-k_{\rho}(z-d_{\ell})}+\sigma^{2}_{\ell\ell^{\prime}}(k_x, k_y)e^{-k_{\rho}(d_{\ell-1}-z)}.
\end{split}
\end{equation}
Applying the interface condition \eqref{generalinterfacecond} at $z=d_{\ell-1}$ gives
\begin{equation}\label{layertransmission}
\begin{split}
a_{\ell-1}\sigma^{1}_{\ell-1,\ell^{\prime}}+a_{\ell-1}e^{-k_{\rho}D_{\ell-1}}\sigma^{2}_{\ell-1,\ell^{\prime}}=a_{\ell}e^{-k_{\rho}D_{\ell}}\sigma^{1}_{\ell\ell^{\prime}}+a_{\ell}\sigma^{2}_{\ell\ell^{\prime}},\\
b_{\ell-1}\sigma^{1}_{\ell-1,\ell^{\prime}}-b_{\ell-1}e^{-k_{\rho}D_{\ell-1}}\sigma^{2}_{\ell-1,\ell^{\prime}}=b_{\ell}e^{-k_{\rho}D_{\ell}}\sigma^{1}_{\ell\ell^{\prime}}-b_{\ell}\sigma^{2}_{\ell\ell^{\prime}},
\end{split}
\end{equation}
or in matrix form
\begin{equation}
\widehat{\mathbb S}^{(\ell-1)}\begin{pmatrix}
\sigma^{1}_{\ell-1,\ell^{\prime}}\\
\sigma^{2}_{\ell-1,\ell^{\prime}}
\end{pmatrix}=\widetilde{\mathbb S}^{(\ell)}\begin{pmatrix}
\sigma^{1}_{\ell\ell^{\prime}}\\
\sigma^{2}_{\ell\ell^{\prime}}
\end{pmatrix},
\end{equation}
where
\begin{equation}
\widehat{\mathbb S}^{(\ell)}:=\begin{pmatrix}
a_{\ell} & a_{\ell}e_{\ell}\\
b_{\ell} & -b_{\ell}e_{\ell}
\end{pmatrix}, \quad \widetilde{\mathbb S}^{(\ell)}:=\begin{pmatrix}
a_{\ell}e_{\ell} & a_{\ell}\\
b_{\ell}e_{\ell} & -b_{\ell}
\end{pmatrix},\quad \ell=2,3,\cdots, L-1,
\end{equation}
and
\begin{equation}\label{expdef}
e_{\ell}:=e^{- k_{\rho}D_{\ell}},\quad d_{-1}:=d_0,\quad d_{L+1}:=d_{L},\quad D_{\ell}=d_{\ell-1}-d_{\ell},\quad \ell=0, 1, \cdots, L.
\end{equation}
Solving the above equations for $\{\sigma^{1}_{\ell-1,\ell'}, \sigma^{2}_{\ell-1,\ell'}\}$, we obtain
\begin{equation}\label{transmissionnosource}
\begin{pmatrix}
\sigma^{1}_{\ell-1,\ell^{\prime}}\\
\sigma^{2}_{\ell-1,\ell^{\prime}}
\end{pmatrix}=\mathbb T^{\ell-1,\ell}\begin{pmatrix}
\sigma^{1}_{\ell\ell^{\prime}}\\
\sigma^{2}_{\ell\ell^{\prime}}
\end{pmatrix}
\end{equation}
for $\ell= 2, 3,\cdots, L-1$, where
\begin{equation}\label{transmissionmat}
\begin{split}
\mathbb T^{\ell-1,\ell}=&\begin{pmatrix}
a_{\ell-1} & a_{\ell-1}e_{\ell-1}\\
b_{\ell-1} & -b_{\ell-1}e_{\ell-1}
\end{pmatrix}^{-1}\begin{pmatrix}
a_{\ell}e_{\ell} & a_{\ell}\\
b_{\ell}e_{\ell} & -b_{\ell}
\end{pmatrix}
=\frac{1}{2e_{\ell-1}}\begin{pmatrix}
e_{\ell-1} & 0\\
0 & 1
\end{pmatrix}
\widehat{\mathbb T}^{\ell-1,\ell}\begin{pmatrix}
e_{\ell} & 0\\
0 & 1
\end{pmatrix},
\end{split}
\end{equation}
and
\begin{equation}
\widehat{\mathbb T}^{\ell-1,\ell}:=\begin{pmatrix}
\displaystyle\frac{a_{\ell}}{a_{\ell-1}}+\frac{b_{\ell}}{b_{\ell-1}} & \displaystyle\frac{a_{\ell}}{a_{\ell-1}}-\frac{b_{\ell}}{b_{\ell-1}}\\
\displaystyle\frac{a_{\ell}}{a_{\ell-1}}-\frac{b_{\ell}}{b_{\ell-1}} & \displaystyle\frac{a_{\ell}}{a_{\ell-1}}+\frac{b_{\ell}}{b_{\ell-1}}
\end{pmatrix}.
\end{equation}
For the top and bottom most layers, we have $\sigma^{\downarrow}_{0\ell'}=0$ and $\sigma^{\uparrow}_{L\ell'}=0$, we can also verify that
\begin{equation}
\begin{pmatrix}
\sigma^{1}_{0\ell'}\\
0
\end{pmatrix}=\mathbb T^{01}\begin{pmatrix}
\sigma^{1}_{1\ell'}\\
\sigma^{2}_{1\ell'}
\end{pmatrix},\quad \begin{pmatrix}
\sigma^{1}_{L-1,\ell'}\\
\sigma^{2}_{L-1,\ell'}
\end{pmatrix}=\mathbb T^{L-1,L}
\begin{pmatrix}
0\\
\sigma^{2}_{L\ell'}
\end{pmatrix}.
\end{equation}

Next, we consider the solution in the layer with source $\bs r'$ inside, i.e., the solution in the $\ell'$-th layer. The general solution is given by
\begin{equation}
\widehat u_{\ell'\ell'}(k_x, k_y, z)=\sigma^{1}_{\ell^{\prime}\ell^{\prime}}e^{\ri k_{\ell' z}(z-d_{\ell'})}+\sigma^{2}_{\ell^{\prime}\ell^{\prime}}e^{\ri k_{\ell' z}(d_{\ell'-1}-z)}+\widehat{G}(k_x, k_y, z, z^{\prime}).
\end{equation}
At the interfaces $z=d_{\ell'-1}$ and $z=d_{\ell'}$, the interface conditions \eqref{generalinterfacecond} lead to equations
\begin{equation}\label{sourcelayerupper}
\begin{split}
&a_{\ell'-1}\big(\sigma^{1}_{\ell'-1,\ell^{\prime}}+e_{\ell'-1}\sigma^{2}_{\ell'-1,\ell^{\prime}}\big)=a_{\ell'}\big(e_{\ell'}\sigma^{1}_{\ell'\ell^{\prime}}+\sigma^{2}_{\ell'\ell^{\prime}}+\widehat{G}^{2}(k_x, k_y, d_{\ell'-1}, z')\big),\\
&b_{\ell'-1}\big(\sigma^{1}_{\ell'-1,\ell^{\prime}}-e_{\ell'-1}\sigma^{2}_{\ell'-1,\ell^{\prime}}\big)=b_{\ell'}\big(e_{\ell'}\sigma^{1}_{\ell'\ell^{\prime}}-\sigma^{2}_{\ell'\ell^{\prime}}\big)-\frac{ b_{\ell'}}{k_{\rho}}\partial_z\widehat{G}^{2}(k_x, k_y, d_{\ell'-1}, z'),
\end{split}
\end{equation}
and
\begin{equation}\label{sourcelayerbottom}
\begin{split}
&a_{\ell'}\big(\sigma^{1}_{\ell'\ell^{\prime}}+e_{\ell'}\sigma^{2}_{\ell'\ell^{\prime}}\big)=a_{\ell'+1}\big(e_{\ell'+1}\sigma^{1}_{\ell'+1\ell^{\prime}}+\sigma^{2}_{\ell'+1,\ell^{\prime}}\big)-a_{\ell'}\widehat{G}^{1}(k_x, k_y, d_{\ell'}, z'),\\
&b_{\ell'}\big(\sigma^{1}_{\ell'\ell^{\prime}}-e_{\ell'}\sigma^{2}_{\ell'\ell^{\prime}}\big)=b_{\ell'+1}\big(e_{\ell'+1}\sigma^{1}_{\ell'+1\ell^{\prime}}-\sigma^{2}_{\ell'+1,\ell^{\prime}}\big)+\frac{ b_{\ell'}}{k_{\rho}} \partial_z\widehat{G}^{1}(k_x, k_y, d_{\ell'}, z').
\end{split}
\end{equation}
Note that
\begin{equation*}
\partial_z\widehat{G}^{2}(k_x, k_y, d_{\ell'-1}, z')=- k_{\rho}\widehat{G}^{2}(k_x, k_y, d_{\ell'-1}, z'),\quad \partial_z\widehat{G}^{1}(k_x, k_y, d_{\ell'}, z')= k_{\rho}\widehat{G}^{1}(k_x, k_y, d_{\ell'}, z').
\end{equation*}
Then, equations \eqref{sourcelayerupper}-\eqref{sourcelayerbottom} can be reformulated as
\begin{equation}\label{downarrowsource}
\begin{pmatrix}
\sigma^{1}_{\ell'-1,\ell^{\prime}}\\
\sigma^{2}_{\ell'-1,\ell^{\prime}}
\end{pmatrix}=\mathbb T^{\ell'-1,\ell'}\begin{pmatrix}
\sigma^{1}_{\ell'\ell^{\prime}}\\
\sigma^{2}_{\ell'\ell^{\prime}}
\end{pmatrix}+\breve{\mathbb S}^{(\ell'-1)}\begin{pmatrix}
a_{\ell'}\\
b_{\ell'}
\end{pmatrix}\widehat{G}^{2}(k_x, k_y,d_{\ell'-1}, z')
\end{equation}
and
\begin{equation}\label{uparrowsource}
\begin{pmatrix}
\sigma^{1}_{\ell'\ell^{\prime}}\\
\sigma^{2}_{\ell'\ell^{\prime}}
\end{pmatrix}=\mathbb T^{\ell'\ell'+1}\begin{pmatrix}
\sigma^{1}_{\ell'+1,\ell^{\prime}}\\
\sigma^{2}_{\ell'+1,\ell^{\prime}}
\end{pmatrix}+\breve{\mathbb S}^{(\ell')}\begin{pmatrix}
-a_{\ell'}\\
b_{\ell'}
\end{pmatrix}\widehat{G}^{1}(k_x, k_y,d_{\ell'}, z'),
\end{equation}
where
\begin{equation}
\breve{\mathbb S}^{(\ell)}=\big(\widehat{\mathbb S}^{(\ell)}\big)^{-1}=\frac{1}{2}\begin{pmatrix}
1 & 0\\
0 & e_{\ell}^{-1}
\end{pmatrix}\begin{pmatrix}
\displaystyle\frac{1}{a_{\ell}} & \displaystyle\frac{1}{b_{\ell}}\\
\displaystyle\frac{1}{a_{\ell}} & \displaystyle-\frac{1}{b_{\ell}}
\end{pmatrix}:=\begin{pmatrix}
\breve S_{11}^{(\ell)} & \breve S_{12}^{(\ell)}\\
\breve S_{21}^{(\ell)} & \breve S_{22}^{(\ell)}
\end{pmatrix}.
\end{equation}
Define
\begin{equation}\label{regularmat}
\widetilde{\mathbb T}^{\ell-1,\ell}=2e_{\ell-1}\mathbb T^{\ell-1,\ell},\quad C^{(\ell)}=\prod\limits_{j=0}^{\ell-1}\frac{1}{2e_j},\quad \mathbb A^{(\ell)}=\widetilde{\mathbb T}^{01}\widetilde{\mathbb T}^{12}\cdots \widetilde{\mathbb T}^{\ell-1,\ell}:=\begin{pmatrix}
\alpha_{11}^{(\ell)} & \alpha_{12}^{(\ell)}\\
\alpha_{21}^{(\ell)} & \alpha_{22}^{(\ell)}
\end{pmatrix},
\end{equation}
for $\ell=1, 2,\cdots, L$. Then, recursions in \eqref{transmissionnosource}, \eqref{downarrowsource} and \eqref{uparrowsource} result in the system
\begin{equation}\label{Lsystem}
\begin{split}
\begin{pmatrix}
\sigma^{1}_{0\ell'}\\
0
\end{pmatrix}=&C^{(L)}\mathbb A^{(L)}\begin{pmatrix}
0\\
\sigma^{2}_{L\ell'}
\end{pmatrix}+C^{(\ell'-1)}\mathbb A^{(\ell'-1)}\breve{\mathbb S}^{(\ell'-1)}\begin{pmatrix}
a_{\ell'}\\
b_{\ell'}
\end{pmatrix}\widehat{G}^{2}(k_x, k_y, d_{\ell'-1}, z')\\
&+C^{(\ell')}\mathbb A^{(\ell')}\breve{\mathbb S}^{(\ell')}\begin{pmatrix}
-a_{\ell'}\\
b_{\ell'}
\end{pmatrix}\widehat{G}^{1}(k_x, k_y, d_{\ell'}, z').
\end{split}
\end{equation}

It is not numerically stable to directly solve \eqref{Lsystem} for $\sigma_{0\ell'}^{1}$ and $\sigma_{L\ell'}^{2}$ then apply recursions \eqref{transmissionnosource}, \eqref{downarrowsource} and \eqref{uparrowsource} to obtain all other reaction densities due to the exponential functions involved in the formulations. According to the expression \eqref{transmissionmat}, the recursions \eqref{transmissionnosource}, \eqref{downarrowsource} and \eqref{uparrowsource} are stable for the computation of the components $\sigma_{\ell\ell'}^{1}(k_{\rho})$. As for the computation of the components $\sigma_{\ell\ell'}^{2}(k_{\rho})$, we need to form linear systems similar as \eqref{Lsystem} using recursions \eqref{transmissionnosource}, \eqref{downarrowsource} and \eqref{uparrowsource} and then solve it.

We first solve the second equation in \eqref{Lsystem} to get
$$\sigma^{2}_{L\ell'}=\sigma_{L\ell'}^{21}\widehat{G}^{1}(k_x, k_y,  d_{\ell'}, z')+\sigma_{L\ell'}^{22}\widehat{G}^{2}(k_{\ell'z}, d_{\ell'-1}, z'),$$
where
\begin{equation}\label{bottommostdensity}
\begin{split}
&\sigma_{L\ell'}^{21}=-\frac{C^{(\ell'+1)}}{C^{(L)}\alpha_{22}^{(L)}}\begin{pmatrix}
\alpha^{(\ell')}_{21} & \alpha^{(\ell')}_{22}
\end{pmatrix}2e_{\ell'}\breve{\mathbb S}^{(\ell')}\begin{pmatrix}
-a_{\ell'}\\
b_{\ell'}
\end{pmatrix},\quad 0\leq\ell'<L,\\
&\sigma_{L\ell'}^{22}=-\frac{C^{(\ell')}}{C^{(L)}\alpha_{22}^{(L)}}\begin{pmatrix}
\alpha^{(\ell'-1)}_{21} & \alpha^{(\ell'-1)}_{22}
\end{pmatrix}2e_{\ell'-1}\breve{\mathbb S}^{(\ell'-1)}\begin{pmatrix}
a_{\ell'}\\
b_{\ell'}
\end{pmatrix},\quad 0<\ell'\leq L.
\end{split}
\end{equation}
According to the recursion \eqref{transmissionnosource},\eqref{downarrowsource} and \eqref{uparrowsource}, all other reaction densities also have decompositions
\begin{equation}\label{desitydecomposition}
\begin{split}
\sigma_{\ell\ell'}^{1}=\sigma_{\ell\ell'}^{11}\widehat{G}^{1}(k_x, k_y, d_{\ell'}, z')+\sigma_{\ell\ell'}^{12}\widehat{G}^{2}(k_x, k_y, d_{\ell'-1}, z'),\\ \sigma_{\ell\ell'}^{2}=\sigma_{\ell\ell'}^{21}\widehat{G}^{1}(k_x, k_y, d_{\ell'}, z')+\sigma_{\ell\ell'}^{22}\widehat{G}^{2}(k_x, k_y, d_{\ell'-1}, z').
\end{split}
\end{equation}
For each $0\leq\ell<L$, we first calculate $\{\sigma_{\ell\ell'}^{11}, \sigma_{\ell\ell'}^{12}\}$ by using one of the recursions \eqref{transmissionnosource}, \eqref{downarrowsource} and \eqref{uparrowsource}, then formulate a linear system for $\{\sigma_{0\ell'}^{1},\sigma_{\ell\ell'}^{2}\}$ as the linear system \eqref{Lsystem}. Next, we solve the second equation in the linear system to obtain reaction densities $\{\sigma_{\ell\ell'}^{21},\sigma_{\ell\ell'}^{22}\}$. In summary, the formulations are given as follows:
\begin{align}
&\sigma_{\ell\ell'}^{11}=\begin{cases}
\displaystyle T^{\ell'\ell'+1}_{11}\sigma_{\ell'+1,\ell'}^{11}+T^{\ell'\ell'+1}_{12}\sigma_{\ell'+1,\ell'}^{21}-\breve S_{11}^{(\ell')}a_{\ell'}+\breve S_{12}^{(\ell')}b_{\ell'},& \ell=\ell',\\
\displaystyle T^{\ell\ell+1}_{11}\sigma_{\ell+1,\ell'}^{11}+T^{\ell\ell+1}_{12}\sigma_{\ell+1,\ell'}^{21}, & {\rm else},
\end{cases}\\
&\sigma_{\ell\ell'}^{12}=\begin{cases}
\displaystyle T^{\ell'-1,\ell'}_{11}\sigma_{\ell'\ell'}^{12}+T^{\ell'-1,\ell'}_{12}\sigma_{\ell'\ell'}^{22}+\breve S_{11}^{(\ell'-1)}a_{\ell'}+\breve S_{12}^{(\ell'-1)}b_{\ell'},& \ell=\ell'-1,\\
\displaystyle T^{\ell\ell+1}_{11}\sigma_{\ell+1,\ell'}^{12}+T^{\ell\ell+1}_{12}\sigma_{\ell+1,\ell'}^{22}, & {\rm else},
\end{cases}\\
&\sigma_{\ell\ell'}^{21}=\begin{cases}
\displaystyle-\frac{1}{\alpha_{22}^{(\ell)}}\begin{pmatrix}
0 & 1
\end{pmatrix}\left[\frac{C^{(\ell'+1)}}{C^{(\ell)}}\mathbb A^{(\ell')}2e_{\ell'}\breve{\mathbb S}^{(\ell')}\begin{pmatrix}
-a_{\ell'}\\
b_{\ell'}
\end{pmatrix}+\mathbb A^{(\ell)}\begin{pmatrix}
\sigma_{\ell\ell'}^{11}\\
0
\end{pmatrix}\right],& \ell>\ell',\\[10pt]
\displaystyle-\frac{\alpha_{21}^{(\ell)}}{\alpha_{22}^{(\ell)}}\sigma_{\ell\ell'}^{11}, & {\rm else},
\end{cases}\\
&\sigma_{\ell\ell'}^{22}=\begin{cases}
\displaystyle-\frac{1}{\alpha_{22}^{(\ell)}}\begin{pmatrix}
0 & 1
\end{pmatrix}\left[\frac{C^{(\ell')}}{C^{(\ell)}}\mathbb A^{(\ell'-1)}2e_{\ell'-1}\breve{\mathbb S}^{(\ell'-1)}\begin{pmatrix}
a_{\ell'}\\
b_{\ell'}
\end{pmatrix}+\mathbb A^{(\ell)}\begin{pmatrix}
\sigma_{\ell\ell'}^{12}\\
0
\end{pmatrix}\right],& \ell\geq \ell',\\[10pt]
\displaystyle-\frac{\alpha_{21}^{(\ell)}}{\alpha_{22}^{(\ell)}}\sigma_{\ell\ell'}^{12}, & {\rm else}.
\end{cases}\label{downdowndensity}
\end{align}
Substituting \eqref{desitydecomposition} and \eqref{freegreenfreq} into \eqref{gensolutionformulafreq} and taking inverse Fourier transform, we obtain expressions \eqref{layeredGreensfun}-\eqref{zexponential}.

From the definition \eqref{transmissionmat} and \eqref{regularmat}, we have
\begin{equation*}
\begin{split}
&T^{\ell\ell+1}_{11}=\frac{a_{\ell+1}b_{\ell}+a_{\ell}b_{\ell+1}}{2a_{\ell}b_{\ell}}e_{\ell+1},\quad  T^{\ell\ell+1}_{12}= \frac{a_{\ell+1}b_{\ell}-a_{\ell}b_{\ell+1}}{2a_{\ell}b_{\ell}},\\
&2e_{\ell}\breve{\mathbb S}^{(\ell)}=\begin{pmatrix}
\displaystyle a_{\ell}^{-1}e_{\ell} & \displaystyle b_{\ell}^{-1}e_{\ell}\\
\displaystyle a_{\ell}^{-1} & \displaystyle-b_{\ell}^{-1}
\end{pmatrix},\quad
\frac{C^{(\ell_1)}}{C^{(\ell_2)}}=\begin{cases}
1 & \ell_1=\ell_2,\\
2^{\ell_2-\ell_1}e^{-k_{\rho}(d_{\ell_1-1}-d_{\ell_2-1})} & 0\leq\ell_1<\ell_2,
\end{cases}
\end{split}
\end{equation*}
and an asymptotic behavior
\begin{equation}
\mathbb A^{(\ell)}\sim\begin{pmatrix}
\tilde\alpha_{11}^{(\ell)}e_0e_1\cdots e_{\ell} & \tilde\alpha_{12}^{(\ell)}e_0\\
\tilde\alpha_{21}^{(\ell)}e_{\ell} & \tilde\alpha_{22}^{(\ell)}
\end{pmatrix}, \quad k_{\rho}\rightarrow\infty,
\end{equation}
where $\{\tilde\alpha_{11}^{(\ell)}, \tilde\alpha_{12}^{(\ell)}, \tilde\alpha_{21}^{(\ell)}, \tilde\alpha_{22}^{(\ell)}\}$ are constants independent of $k_{\rho}$. By using these formulations in \eqref{bottommostdensity}-\eqref{downdowndensity}, we can show that all reaction densities $\{\sigma_{\ell\ell'}^{\mathfrak{ab}}(k_{\rho})\}_{\mathfrak{a,b}=1}^2$ have an asymptotic behavior
\begin{equation}\label{densityasymptotic}
\sigma_{\ell\ell'}^{\mathfrak{ab}}(k_{\rho})\sim C_{\ell\ell'}^{\mathfrak{ab}}e^{-k_{\rho}\zeta_{\ell\ell'}^{\mathfrak{ab}}},\quad k_{\rho}\rightarrow\infty,
\end{equation}
where $C_{\ell\ell'}^{\mathfrak{ab}}$ and $\zeta_{\ell\ell'}^{\mathfrak{ab}}$ are constants independent of $k_{\rho}$. For example, we have
\begin{equation}
\begin{split}
&\sigma_{L\ell'}^{21}(k_{\rho})\sim 2^{L-\ell'-1}\frac{\tilde\alpha_{22}^{(\ell')}}{\alpha_{22}^{(L)}}e^{-k_{\rho}(d_{\ell'}-d_{L-1})},\quad k_{\rho}\rightarrow\infty,\\
&\sigma_{L\ell'}^{22}(k_{\rho})\sim 2^{L-\ell'}\frac{\alpha_{22}^{(\ell')}}{\alpha_{22}^{(L)}}\Big(\frac{a_{\ell'}}{a_{\ell'-1}}+\frac{b_{\ell'}}{b_{\ell'-1}}\Big)e^{-k_{\rho}(d_{\ell'-1}-d_{L-1})},\quad k_{\rho}\rightarrow\infty.
\end{split}
\end{equation}

If the number of layers is not large, we are able to write down explicit expressions of the reaction densities. Here, we give expressions for the case of a three layers media with $a_{\ell}=1$, $b_{\ell}=\varepsilon_{\ell}$ as an example.
\begin{itemize}
	\item Source in the top layer:
	\begin{equation}\label{densitythreelayer1}
	\begin{split}
	\sigma_{00}^{11}(k_{\rho})=&\frac{(\varepsilon_0-\varepsilon_1)(\varepsilon_1+\varepsilon_2)+(\varepsilon_0+\varepsilon_1)(\varepsilon_1-\varepsilon_2)e^{2d_1k_{\rho}}}{2\kappa(k_{\rho})},\\
	\sigma_{10}^{21}(k_{\rho})=&\frac{\varepsilon_0(\varepsilon_1+\varepsilon_2)}{\kappa(k_{\rho})},\quad
	\sigma_{10}^{11}(k_{\rho})=\frac{\varepsilon_0(\varepsilon_1-\varepsilon_2)e^{d_1k_{\rho}}}{\kappa(k_{\rho})},\quad
	\sigma_{20}^{21}(k_{\rho})=\frac{2 \varepsilon_0\varepsilon_1 e^{d_1k_{\rho}}}{\kappa(k_{\rho})}.
	\end{split}
	\end{equation}
	\item Source in the middle layer:
	\begin{equation}\label{densitythreelayer2}
	\begin{split}
	\sigma_{01}^{12}(k_{\rho})=&\frac{\varepsilon_1(\varepsilon_1+\varepsilon_2) }{\kappa(k_{\rho})},\quad
	\sigma_{01}^{11}(k_{\rho})=\frac{\varepsilon_1(\varepsilon_1-\varepsilon_2)e^{d_1k_{\rho}} }{\kappa(k_{\rho})},\\
	\sigma_{11}^{11}(k_{\rho})=&\frac{(\varepsilon_1-\varepsilon_2)(\varepsilon_1+\varepsilon_0)}{2\kappa(k_{\rho})},\quad
	\sigma_{11}^{21}(k_{\rho})=\frac{(\varepsilon_1-\varepsilon_2)(\varepsilon_1-\varepsilon_0)e^{d_1k_{\rho}}}{2\kappa(k_{\rho})},\\
	\sigma_{11}^{12}(k_{\rho})=&\frac{(\varepsilon_1-\varepsilon_2)(\varepsilon_1-\varepsilon_0)e^{d_1k_{\rho}}}{2\kappa(k_{\rho})},\quad\sigma_{11}^{22}(k_{\rho})=\frac{(\varepsilon_1+\varepsilon_2)(\varepsilon_1-\varepsilon_0)}{2\kappa(k_{\rho})},\\
	\sigma_{21}^{22}(k_{\rho})=&\frac{\varepsilon_1(\varepsilon_1-\varepsilon_0) e^{d_1k_{\rho}}}{\kappa(k_{\rho})},\quad
	\sigma_{21}^{21}(k_{\rho})=\frac{\varepsilon_1(\varepsilon_0+\varepsilon_1) }{\kappa(k_{\rho})}.
	\end{split}
	\end{equation}
	\item Source in the bottom layer:
	\begin{equation}\label{densitythreelayer3}
	\begin{split}
	\sigma_{02}^{12}(k_{\rho})=&\frac{2 \varepsilon_1\varepsilon_2 e^{d_1k_{\rho}}}{\kappa(k_{\rho})},\quad
	\sigma_{12}^{22}(k_{\rho})=\frac{\varepsilon_2(\varepsilon_1-\varepsilon_0)e^{d_1k_{\rho}}}{\kappa(k_{\rho})},\quad
	\sigma_{12}^{12}(k_{\rho})=\frac{\varepsilon_2(\varepsilon_0+\varepsilon_1)}{\kappa(k_{\rho})},\\
	\sigma_{22}^{22}(k_{\rho})=&\frac{(\varepsilon_1-\varepsilon_0)(\varepsilon_1+\varepsilon_2)+(\varepsilon_0+\varepsilon_1)(\varepsilon_2-\varepsilon_1)e^{2d_1k_{\rho}} }{2\kappa(k_{\rho})},
	\end{split}
	\end{equation}
\end{itemize}
where
$$\kappa(k_{\rho})=\frac{1}{2}\big[(\varepsilon_0+\varepsilon_1)(\varepsilon_1+\varepsilon_2)+(\varepsilon_0-\varepsilon_1)(\varepsilon_2-\varepsilon_1)e^{2d_1k_{\rho}}\big].$$
Apparently, these expressions also verify our conclusion \eqref{densityasymptotic} on the asymptotic behavior of the reaction densities.
\end{appendix}

}

\section*{Acknowledgement}

This work was supported by US Army Research Office (Grant No.
W911NF-17-1-0368) and US National Science Foundation (Grant No. DMS-1764187). The research of the first author is partially supported by NSFC (grant 11771137), the Construct Program of the Key Discipline in Hunan Province and a Scientific Research Fund of Hunan Provincial Education Department (No. 16B154).

\section*{References}

\end{document}